\newtheorem{lemma}{Lemma}[section]
\newtheorem{theorem}[lemma]{Theorem}
\theoremstyle{definition}
\newtheorem{remark}[lemma]{Remark}
\newtheorem{example}[lemma]{Example}
\newtheorem{definition}[lemma]{Definition}
\newcommand{\comment}[1]{}
\newcommand{\sbm}[1]{\begin{bmatrix} #1 \end{bmatrix}}
\DeclareMathOperator{\codim}{codim}
\DeclareMathOperator{\Cov}{Cov}
\DeclareMathOperator{\Inn}{Inn}
\DeclareMathOperator{\N}{N}
\DeclareMathOperator{\Aut}{Aut}
\newcommand{\Dih}{{\rm{Dih}}}
\DeclareMathOperator{\beg}{beg}
\DeclareMathOperator{\inv}{inv}
\DeclareMathOperator{\V}{V}
\newcommand{\cI}{{\mathcal{I}}}
\newcommand{\generat}{{\rm \hbox{gen}}}
\newcommand{\DC}{{\rm C}_n^{(2)}}
\DeclareMathOperator{\D}{D}
\newcommand{\la}{\langle}
\newcommand{\ra}{\rangle}
\newcommand{\un}{\underline}
\renewcommand{\mod}{\hbox{{\rm mod}}}
\newcommand{\id}{{\rm id}}
\newcommand{\ZZ}{\mathbb{Z}}
\newcommand{\NN}{\mathbb{N}}
\newcommand{\GL}{{\rm GL}}
\newcommand{\normal}{\triangleleft}
\newcommand{\gen}[1]{\langle #1 \rangle}
\newcommand{\ad}{{\rm ad}}
\newcommand{\p}{\wp}
\newcommand{\Orb}{{\rm Orb}}
\newcommand{\HG}{{\rm H}}
\newcommand{\expn}{{\rm exp}}
\newcommand{\Expn}{{\rm Exp}}
\newcommand{\ind}{{\rm ind}}
\renewcommand{\deg}{{\rm deg}}
\begin{document}


\begin{center}
{\bf\Large Tetravalent Vertex- and Edge-Transitive Graphs Over Doubled Cycles} \\ [+4ex]
Bo\v stjan Kuzman{\small$^{a,d,}$}\footnotemark,
Aleksander Malni\v c{\small$^{a,c,d,}$}\footnotemark,
Primo\v z Poto\v cnik{\small$^{b,d,}$}
\footnotemark
\\ [+2ex]
{\it \small
$^a$University of Ljubljana, UL PEF, Kardeljeva pl.~16, 1000 Ljubljana, Slovenia\\
$^b$University of Ljubljana, UL FMF, Jadranska 19, 1000 Ljubljana, Slovenia\\
$^c$University of Primorska, UP IAM, Muzejski trg 2, 6000 Koper, Slovenia\\
$^d$IMFM, Jadranska 19, 1000 Ljubljana, Slovenia\\
}
\end{center}

\addtocounter{footnote}{-2}
\footnotetext{This work is supported in part by the Slovenian Research Agency (research program P1-0285 and research project  J1-7051).}
\addtocounter{footnote}{1}
\footnotetext{This work is supported in part by the Slovenian Research Agency (research program P1-0285 and research projects N1-0032, N1-0038, N1-0062, J1-6720, and J1-7051).}
\addtocounter{footnote}{1}
\footnotetext{This work is supported in part by the Slovenian Research Agency (research program P1-0294).

\smallskip
\quad
Email addresses:
bostjan.kuzman@gmail.com,\ 
aleksander.malnic@guest.arnes.si,\ 
primoz.potocnik@fmf.uni-lj.si

}


\begin{abstract}
In order to complete (and generalize) results of  Gardiner and Praeger on 4-valent symmetric graphs (European J. Combin, 15 (1994)) we apply the method of lifting
automorphisms  in the context of  elementary-abelian covering projections.  In particular, the vertex- and edge-transitive graphs whose quotient 
by a normal $p$-elementary abelian group of automorphisms,  for $p$ an odd prime,
is a cycle, are described in terms of cyclic and negacyclic codes. Specifically, the symmetry properties of such graphs are derived from certain properties of the generating polynomials of cyclic and negacyclic codes, that is, from divisors of $x^n \pm 1 \in \ZZ_p[x]$. As an application, a short and unified description of  resolved and unresolved cases of  Gardiner and Praeger are given.
 \end{abstract}



\bigskip

\section{Introduction}
\label{sec:intro}

A classical topic in the algebraic graph theory is the classification of interesting families of graphs with specific symmetry properties, arc-transitive in particular.  Following the pioneering work of Tutte \cite{Tut}, the class of cubic arc-transitive graphs
is certainly the  most widely studied while the $4$-valent case is considerably less well understood,
with some important results appearing recently \cite{PSV,PWarchive,genlost}.  

A systematic approach to these graphs was initiated by Gardiner and Praeger \cite{GP0, GP1}. Let us briefly explain their approach (see Section~\ref{sec:prelim} for the definitions of all concepts used in this introductory section).
Let $\Gamma$ be a connected finite simple 4-valent $G$-arc-transitive graph. If $G$ contains no normal abelian subgroups, then its structure is rather restricted: 
 it is isomorphic to a group $G^*$ satisfying $\Inn R\leq G^*\leq \Aut R$, where $R$ is the direct product of the minimal normal subgroups of $G$ (such groups $G$ are called  semisimple, see \cite[p.~89]{Rob}). This structural information on $G$ yields severe restrictions on the graph $\Gamma$; for example, it was shown in \cite{PSV} that the order of the vertex stabilizer $G_v$ is bounded by a sub-linear function of the order $n$ of the graph $\Gamma$ --  which is in strong contrast with the general case where $|G_v|$ cannot be bounded by any sub-exponential function of $n$.

While the case where $G$ is semisimple
is interesting from the group-theoretical point of view,
 the case where $G$ contains a nontrivial normal abelian subgroup often yields graphs with richer combinatorial structure.
Henceforth, let $N$ be an abelian minimal normal subgroup of $G$. Then $N$ is isomorphic  to $\ZZ_p^r$ for some prime $p$  and a positive integer $r$. 
Consider the quotient graph $\Gamma_N$ (where the quotient is defined as in \cite{GP0}, see also Section~\ref{sec:prelim}). It is easy to see that $\Gamma_N$ is  isomorphic either to $K_1$, $K_2$, a cycle $C_n$, $n\geq 3$, or to  a $4$-valent simple connected $G/N$-arc-transitive graph \cite[Theorem 1.1]{GP0}. The latter case naturally leads to an inductive reduction. If $\Gamma_N$ is isomorphic to $K_1$ or $K_2$, the possibilities for the graphs $\Gamma$ were completely determined in \cite[Theorems 1.2 and 1.3]{GP0}.
The remaining case where $\Gamma_N$ isomorphic to $C_n$ for some $n\geq 3$ is considered in \cite[Theorems 1.1]{GP1}, which we here state with minor notational changes.

\begin{theorem}{\rm \cite[Theorem 1.1]{GP1}}
\label{GaP1}
Let $\Gamma$ be a connected, $H$-symmetric, $4$-valent graph, and let $N$ be a minimal normal $p$-subgroup of $H$ 
with orbits of size $p^r$ for some prime $p$. Let $M$ denote the kernel of the action of $H$ on $N$-orbits. Suppose that the simple quotient $\Gamma$ with respect to $N$ is a cycle $C_n$ of length $n\geq 3$. Then $r\leq n$ and one of the following holds (refer to \cite{GP1} for the definitions of the respective graphs):
\begin{enumerate}[(a)]
\item $p=2$, the centralizer $C_M(N)$ does not act semiregularly on $V(\Gamma)$, and $\Gamma=C(2;n,r)$.
\item $p$ is odd, $C_M(N)$ acts semiregularly on $V(\Gamma)$, and the stabilizer $M_v$ of a vertex $v$
is a nontrivial elementary-abelian $2$-group of order dividing $2^r$. In the extremal case in which 
$|M_v|=2^r$, we have $\Gamma=C^{\pm 1}(p;rt,r)$ or $\Gamma=C^{\pm\theta}(p;2rt,r)$ for some $t\geq 1$.
\end{enumerate}
\end{theorem}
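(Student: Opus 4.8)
The plan is to realize the quotient projection $\Gamma \to \Gamma_N$ as a regular elementary-abelian covering projection and then to translate every hypothesis into the algebra of $\ZZ_p[x]/(x^n \mp 1)$. Since $N \cong \ZZ_p^r$ acts semiregularly (its orbits all have size $p^r = |N|$) and a regular cover preserves valency, the true quotient multigraph of the $4$-valent graph $\Gamma$ is not the simple cycle $C_n$ but the doubled cycle $\DC$; thus $\Gamma$ is a connected $\ZZ_p^r$-regular cover of $\DC$. I would first coordinatize the cover, writing $V(\Gamma) = \ZZ_n \times \ZZ_p^r$ with each layer $\{i\}\times\ZZ_p^r$ a fibre, and record the two parallel edges joining consecutive layers by a voltage assignment into $\ZZ_p^r$. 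Normalizing by gauge transformations (re-labelling fibres) lets me set one of each pair of voltages to $0$; connectedness of $\Gamma$ then becomes the requirement that the remaining voltages generate $N$, and $H$-symmetry becomes the requirement that the rotation, a reflection, and the parallel-edge swaps of $\DC$ all lift along the voltage assignment.

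The heart of the argument is to read a module structure off the lifted rotation. Let $\rho \in H/M$ be the one-step rotation of the $n$ fibres (of order $n$), and let $\tilde\rho \in H$ be a lift realizing the $H$-symmetry. Conjugation by $\tilde\rho$ induces a $\ZZ_p$-linear automorphism $\sigma$ of $N$, making $N$ a $\ZZ_p[x]$-module with $x$ acting as $\sigma$. Crucially, $\tilde\rho^{\,n}$ lies in the kernel $M$ but need \emph{not} lie in $N$, so $\sigma^n$ is conjugation by an element of $M$ and is governed by how $M$ acts on $N$: it equals $\id$ when $\tilde\rho^{\,n}$ centralizes $N$ (the cyclic case, $x^n = 1$) and $-\id$ when $\tilde\rho^{\,n}$ acts as $-\id$ on $N$ (the negacyclic case, $x^n = -1$, which forces $\tilde\rho$ to have order $2n$). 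Hence $N$ is a module over $\ZZ_p[x]/(x^n - 1)$ or over $\ZZ_p[x]/(x^n + 1)$. Minimality of $N$ makes it \emph{irreducible}, so $N \cong \ZZ_p[x]/(f)$ for $f$ an irreducible divisor of $x^n \mp 1$ — a generating polynomial of a cyclic, respectively negacyclic, code — and since $r = \dim_{\ZZ_p} N = \deg f \le n$ we obtain $r \le n$ at once. For $p$ odd, $f$ divides exactly one of $x^n - 1$, $x^n + 1$ (a common factor would divide $2$), so the cyclic/negacyclic label is intrinsic; for $p = 2$ the two polynomials coincide.

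Next I would analyse the fibre-action of the remaining lifted symmetries, which produces the dichotomy (a)/(b). The lifts of the reflection and of the vertex-fixing parallel-edge swaps are involutions in $M$ acting on the irreducible module $N$ as $\pm\id$; hence the vertex-stabilizer $M_v$ is an elementary-abelian $2$-group of order dividing $2^r$, non-trivial because $H$-symmetry forces at least one fibre-reflecting involution (acting as $-\id$). The split is now dictated by the arithmetic of $-\id$ on $N$. For $p$ odd, $-\id \neq \id$, so every such involution — and, in the negacyclic case, $\tilde\rho^{\,n}$ itself — fails to centralize $N$; thus $C_M(N) = N$, which is semiregular, and we are in case (b). For $p = 2$, $-\id = \id$, so these vertex-fixing involutions do centralize $N$, placing them in $C_M(N) \setminus N$; then $C_M(N)$ has non-trivial vertex-stabilizers and is not semiregular, as in case (a). This collapse of $\pm 1$ over $\ZZ_2$, together with the coincidence $x^n + 1 = x^n - 1$, is precisely why (a) produces the single family $C(2; n, r)$ whereas (b) produces both $C^{\pm 1}(p; rt, r)$ (cyclic) and $C^{\pm\theta}(p; 2rt, r)$ (negacyclic).

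Finally, in the extremal case $|M_v| = 2^r$ the full group of independent sign-changes on the $r$-dimensional module $N$ is realized, so that beyond $f$ and its cyclic/negacyclic label no free parameters remain; matching $\deg f = r$ and the layer count against the prescribed data — the negacyclic relation $\sigma^n = -\id$ forcing $\tilde\rho$ to have order $2n$, which is what produces the factor $2$ distinguishing the parameter $2rt$ from $rt$ — identifies $\Gamma$ with $C^{\pm 1}(p; rt, r)$ in the cyclic case and with $C^{\pm\theta}(p; 2rt, r)$ in the negacyclic case. I expect the main obstacle to be the rigorous set-up of the module in the second step: verifying that the lifted rotation acts as a single linear map $\sigma$ coherently across all fibres, pinning down whether $\tilde\rho^{\,n}$ acts as $\id$ or $-\id$ on $N$ (equivalently, whether $f$ divides $x^n - 1$ or $x^n + 1$), and checking compatibility with the lifts of the reflection and the edge-swaps. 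Once this coherence of the lifted symmetries — the genuine source of the cyclic/negacyclic split — is secured, the remaining steps reduce to bookkeeping with divisors of $x^n \mp 1$ in $\ZZ_p[x]$.
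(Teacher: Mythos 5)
Your outline shares the paper's general viewpoint (realizing $\Gamma$ as an elementary-abelian cover of the doubled cycle $\DC$ and reading the structure off cyclic/negacyclic codes), but note first that the paper does not prove Theorem~\ref{GaP1} at all --- it quotes it from Gardiner--Praeger and then proves the generalization of part (b), Theorem~\ref{thm:main}, via Lemma~\ref{lem:AT} and Theorems~\ref{thm:min-const} and~\ref{thm:maxG-kernel}. Measured against that machinery, your central step fails. You claim that minimality of $N$ as a normal subgroup of $H$ makes $N$ irreducible as a $\ZZ_p[x]$-module with $x$ acting as conjugation by the lifted rotation $\tilde\rho$, whence $N\cong\ZZ_p[x]/(f)$ for an \emph{irreducible} divisor $f$ of $x^n\mp1$. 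Minimality only gives irreducibility under conjugation by \emph{all} of $H$ --- including the lifts of the parallel-edge swaps and of the reflection --- which is strictly weaker than irreducibility under $\langle\tilde\rho\rangle$ alone. The paper's own table (row 6 of the example in Section~\ref{sec:intro}) refutes your claim concretely: for $p=7$, $n=3$, $g(x)=x+6 \mid x^3-1$, the graph $C4[147,6]$ is arc-transitive with $N\cong\ZZ_7^2$ minimal normal in its full automorphism group; yet both $x^3-1$ and $x^3+1$ split into distinct linear factors over $\ZZ_7$, so no irreducible quadratic $f$ exists. There $N$ splits under $\tilde\rho$ into two eigenlines that are merely interchanged by the lifted reflection. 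Consequently your identification $r=\deg f$ and the whole extremal-case argument collapse (the bound $r\le n$ survives only because $r=n-\deg g(x)$ in general); the correct substitute is that the code is a minimal invariant subspace for the \emph{whole} group that lifts, governed by maximality of $g_d(x)$ as a (weakly reflexible) divisor of $\Delta_{n/d,\epsilon}(x)$ with $d=\Expn(g(x))$, as in Theorems~\ref{thm:min-const} and~\ref{thm:maxG-kernel} --- not by irreducibility of a single factor.

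Two further steps would also fail. First, your Schur-type claim that the involutions of $M$ act on $N$ as $\pm\id$ is false: those involutions do not commute with $\sigma$ (conjugation by $\tilde\rho$ shifts a lifted edge-swap to another one), so Schur's lemma does not apply; and were the claim true, then for $p$ odd semiregularity of $C_M(N)$ would make $M_v$ act faithfully, forcing $M_v\hookrightarrow\{\pm\id\}$ and $|M_v|\le 2$, contradicting the extremal case $|M_v|=2^r$, $r\ge 2$, which the theorem (and your own final paragraph, with its ``independent sign-changes'') allows. The correct mechanism, which is where $|M_v|$ dividing $2^r$ actually comes from, is that $M_v$ acts diagonally with entries $\pm1$ in the basis $\{x^jg(x)\}$, as in (\ref{eq:act-B}), giving $M_v\cong\ZZ_2^d$ with $d\mid r$. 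Second, you assume at the outset that $N\cong\ZZ_p^r$ acts semiregularly, i.e.\ $|N|=p^r$, whereas the hypothesis only fixes the orbit size. For $p$ odd this is a genuine lemma, proved in Subsection~\ref{subsec:main-proof} by a parity argument using that the orbits have odd size; for $p=2$ it is precisely what can fail, the quotient projection then is not a covering at all (Lemma~\ref{lem:semiregular}), and your voltage set-up degenerates in any case since $u=-u$ in $\ZZ_2^r$. So case (a) is never actually derived: in your write-up the dichotomy (a)/(b) is asserted, not proven.
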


Note that the classification problem in the above theorem is completely resolved for $p=2$, but remains largely unsettled for $p$ odd.  Some additional information about this case was provided in \cite[Theorem 1.2]{GP1}.
In this paper we resolve part (b) of Theorem~\ref{GaP1} completely and generalize it in the sense that we allow the group  $H$ to be vertex- and edge-, but not necessarily arc-transitive.
Before stating our main result (see Theorem~\ref{thm:main}) we  present a construction of certain $4$-valent graphs arising from polynomials that are related to linear codes.

\begin{definition}
\label{def:matrix}
Let $p$ be an odd prime, $n\in \NN$ a positive integer, $\epsilon\in \{0,1\}$, and
let $\Delta_{n,\epsilon}(x)$ denote the polynomial
$\Delta_{n,\epsilon}(x)=x^n-(-1)^\epsilon\in\ZZ_p[x]$ over the prime field $\ZZ_p$.
For any proper divisor $g(x)$ of $\Delta_{n,\epsilon}(x)$, say
$$g(x)=\alpha_0+\alpha_1x+\ldots + \alpha_m x^m\in\ZZ_p[x],$$ 
set $r=n-m$,  and let 
$$
M_{g(x)} =\left[\begin{array}{cccccccccc} 
\alpha_0 &  \ldots & \alpha_m & 0 & \ldots &&&\cdots & 0 \\
0  & \alpha_0 & \ldots & \alpha_m & \ddots& &&&\vdots \\
\vdots
& \ddots&  \ddots  &&\ddots&& \\ 

\\
&&&&\ddots&&\ddots&\ddots&\vdots\\
\vdots  && && \ddots& \alpha_0 &\ldots& \alpha_m&0
\\

0  &\cdots&&& \cdots& 0& \alpha_0&\ldots& \alpha_m
\end{array}\right] \in\ZZ_p^{r\times n}
$$ 
be the $r \times n$  matrix associated with the given polynomial $g(x)$. 
The $4$-valent graph $\Gamma_{g(x)}$ of order $n\cdot p^r$ is now defined by the vertex set $\ZZ_p^{r}\times \ZZ_n$ and adjacency relations
$(v,j)\sim (v\pm u_{j+1},j+1)$ for $j \in \ZZ_n$,  where $u_{j+1}\in \ZZ_p^r$ corresponds to the $(j+1)$-th column of the matrix $M_{g(x)}$. 
 \end{definition}

As will be proved in Theorem~\ref{thm:maxG-kernel}, these graphs are always vertex- and edge-transitive. In order to characterize those that are also arc-transitive we need to introduce some further concepts.

\begin{definition}
\label{def:reflexible}
A nonzero polynomial  $f(x)=\alpha_0+\alpha_1x+\ldots + \alpha_m x^m \in \ZZ_p[x]$   is {\em reflexible} whenever there exists some $\lambda \in \ZZ_p^*$ such that either $ \lambda \alpha_{m-i} = \alpha_i$ for all $i\in \{0,\ldots, m\}$ (referred to as {\em type-1 reflexibility}), or $ \lambda \alpha_{m-i} = (-1)^i\alpha_i$ for all $i\in \{0,\ldots, m\}$ (referred to as {\em type-2 reflexibility}). 
\end{definition}

\begin{definition}
\label{def:assoc-poly}
For a polynomial $f(x) \in \ZZ_p[x]$,  let $\expn(f(x))$ denote  the set of all positive integers $k \in \NN$ such that $f(x)$ is a polynomial in $x^k$. For 
 a polynomial of degree $n \geq 1$ this set is bounded by   $d = \Expn(f(x)) = \max\{k\ |\ k \in \expn(f(x))\} \leq n$;  the uniquely defined polynomial $f_d(x) \in \ZZ_p[x]$
such that $f(x) = f_d(x^d)$ is referred to as the {\em polynomial associated with $f(x)$}. 
These concepts will be mainly applied when considering proper divisors of the polynomial 
$\Delta_{n,\epsilon}(x)$ (viewed as elements in the quotient ring
$\ZZ_p[x]/(\Delta_{n,\epsilon}$). In this context we define $\Expn(f(x))$ of the constant polynomial $f(x) = 1$ 
 to be $\Expn(f(x)) = n$,  and its  associated polynomial is the constant polynomial $f_n(x) =1$.  
\end{definition}

\begin{definition}
\label{def:weak}
We call  a nonconstant polynomial $f(x) \in \ZZ_p[x]$ \emph{weakly reflexible} whenever its associated polynomial  is reflexible (the constant polynomial $f(x) =1$, considered as a divisor of $\Delta_{n,\epsilon}(x)$, is reflexible and weakly reflexible). A weakly reflexible proper divisor $g(x)$ of $\Delta_{n,\epsilon}(x)$ is by defnition  \emph{maximal} whenever $g(x)$ is not a proper divisor of another weakly reflexible proper divisor of $\Delta_{n,\epsilon}(x)$. 
\end{definition}

Note that certain polynomials enjoy both types of reflexibility (in fact, in such cases there is no distinction between the two types), see Lemma~\ref{lem:refl-unique}.   
Also,  a reflexible polynomial  is  weakly reflexible (but the converse might not hold, see Lemma~\ref{lem:f-refl-fd-refl}). Hence  if a maximal proper divisor of $\Delta_{n,\epsilon}(x)$ is reflexible, then it is a maximal weakly reflexible divisor.

\begin{example}
Let $g(x)=3+4x^2+2x^4+x^6 \in\ZZ_5[x]$. Then $g(x)$ divides $\Delta_{8,0}(x)=x^{8}-1\in\ZZ_5[x]$, but is not reflexible since $\lambda\alpha_6=\alpha_0$ would imply $\lambda=3$ while $3\alpha_4\ne \alpha_2$. However,  its associated polynomial  $g_2(x)=3+4x+2x^2+x^3$ is reflexible (of type 2) with $\lambda=3$, and so $g(x)$ is weakly reflexible. 
The respective matrix 
$$M_{g(x)}=\begin{bmatrix}
3&0&4&0&2&0&1&0\\
0&3&0&4&0&2&0&1
\end{bmatrix}\in\ZZ_5^{2\times 8}
$$
yields the $4$-valent graph $\Gamma_{g(x)}$ of order $8\cdot 5^2=200$. This graph is arc-transitive. Computation in  Magma shows that
it is isomorphic to the graph $C4[200,22]$ from the Census of symmetric 4-valent graphs by Poto\v cnik and Wilson \cite{PW census}.


\end{example}

\begin{example}
The divisor $g(x)=1+x+x^2+2x^3+x^5$ of $\Delta_{8,0}(x)\in\ZZ_3[x]$ is not weakly reflexible since $g_1(x) = g(x)$ is not reflexible. The respective graph of order $8\cdot 3^3=216$
is isomorphic to  $C4[216,33]$, and is vertex- and edge- but is not arc-transitive \cite{PW census}. 
\end{example}

Our main theorem is now stated as follows.
\begin{theorem}
\label{thm:main}
Let $\Gamma$ be a finite connected simple $4$-valent graph, and let $H$ be a vertex- and edge-transitive subgroup  of $\Aut(\Gamma)$. Further, let $N$ be a minimal normal subgroup of $H$ isomorphic to $\ZZ_p^r$ for some odd prime $p$ and positive integer $r$, and suppose that the simple quotient  graph $\Gamma_N$ is a cycle of length $n\geq 3$.
 
Then $r\leq n$, the group  $N$ acts semiregularly on $V(\Gamma)$, and $\Gamma$ is isomorphic to a graph $\Gamma_{g(x)}$ for some polynomial $g(x)\in\ZZ_p[x]$ of degree $m=n-r$ dividing $\Delta_{n,\epsilon}(x)$  for some $\epsilon \in \{0,1\}$. Let 
$d = \Expn(g(x))$. Then $d$ is a divisor of both $n$ and $r$.   

Moreover, the normalizer  $\N_{\Aut(\Gamma)}(N)$
is arc transitive if and only if $g(x)$ is weakly reflexible. 
If $g(x)$ is not weakly reflexible, then $g_d(x)$ is a maximal divisor of  $\Delta_{n/d,\epsilon}(x)$, and the vertex stabilizer of  $\N_{\Aut(\Gamma)}(N)$  is isomorphic  to  $\ZZ_2^d$. If $g(x)$ is weakly reflexible, then  the associated polynomial $g_d(x)$ is a maximal weakly reflexible divisor of $\Delta_{n/d,\epsilon}(x)$, and the vertex stabilizer of  $\N_{\Aut(\Gamma)}(N)$  is isomorphic  to  $\ZZ_2 \ltimes \ZZ_2^d$. 
\end{theorem}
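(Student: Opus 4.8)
The strategy is to realise $\Gamma$ as a regular elementary-abelian covering projection over the doubled cycle $\DC$ and to control $\N_{\Aut(\Gamma)}(N)$ through the theory of lifting automorphisms. The first step is semiregularity of $N$, and this is exactly where the hypothesis that $p$ is odd enters. Each $N$-orbit is the fibre over a vertex of $\Gamma_N=C_n$, and since $N$ is abelian the stabiliser $N_v$ is constant along an orbit. A fibre vertex has precisely two neighbours in each adjacent fibre, so $N_v$ either fixes or transposes such a pair; as $N\cong\ZZ_p^r$ has no involution, it fixes both, and connectivity of $\Gamma$ then forces $N_v=1$. Thus $N$ is semiregular, every fibre has size $p^r$, and $\Gamma\to\DC$ is a genuine regular cover with voltage group $N$. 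Connectivity means the voltages $u_1,\dots,u_n$ (one per edge of $C_n$) generate $N$, and since $n$ vectors span a space of dimension at most $n$ we get $r\le n$.

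I would then normalise the covering data by a gauge transformation so that the two parallel arcs over the $j$-th edge of the cycle carry opposite voltages $\pm u_{j+1}$, matching the adjacency rule of Definition~\ref{def:matrix}. Vertex- and edge-transitivity forces the rotation $\rho\colon j\mapsto j+1$ of $C_n$ to lift to an element of $\N_{\Aut(\Gamma)}(N)$ whose conjugation action on $N$ is a single linear map $\phi\in\GL_r(\ZZ_p)$ (constant in $j$, because conjugation by a fixed element is one automorphism of $N$). Comparing the images of the $\pm$ arcs yields a recursion $u_{j+2}=s_j\,\phi\,u_{j+1}$ with signs $s_j\in\{\pm1\}$; multiplying these around the cycle produces a global invariant $S=\prod_j s_j\in\{\pm1\}$ and, since $u_1$ is a cyclic vector, the identity $\phi^n=(-1)^\epsilon\id$ with $(-1)^\epsilon=S$. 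Hence $N$ is a cyclic $\ZZ_p[x]/(\Delta_{n,\epsilon}(x))$-module generated by $u_1$ with $x$ acting as $\phi$, its annihilator is $(g(x))$ for a divisor $g(x)$ of $\Delta_{n,\epsilon}(x)$ of degree $m=n-r$, and the vectors $\phi^j u_1$ can be identified with the columns of $M_{g(x)}$; this gives $\Gamma\cong\Gamma_{g(x)}$. Setting $d=\Expn(g(x))$, the relation $g(x)=g_d(x^d)$ shows that every $d$-th root of unity multiplies roots of $g$ into roots of $g$, hence into roots of $\Delta_{n,\epsilon}$; this forces $d\mid n$, and together with $d\mid m$ yields $d\mid r$.

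Next I would identify $\N_{\Aut(\Gamma)}(N)$ with the group of all automorphisms of $\DC$ that lift, and read off its structure from the $\ZZ_d$-grading induced by $d$. Because $\alpha_k=0$ unless $d\mid k$, the voltage $u_{j+1}$ is supported on the coordinate block indexed by $(j+1)\bmod d$; negating each of the $d$ blocks independently gives commuting vertex-fixing involutions $\sigma_0,\dots,\sigma_{d-1}$ generating a subgroup $\ZZ_2^d$ of the stabiliser. Since each $\sigma_c$ preserves fibre levels it never exchanges an up-neighbour with a down-neighbour, so $\ZZ_2^d$ alone is not arc-transitive; arc-transitivity requires a lift of the reflection $j\mapsto -j$, which interchanges the two directions. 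Translating the lifting condition for this reflection into the voltage recursion shows such a lift exists precisely when the associated polynomial $g_d(x)$ is reflexible --- that is, when $g(x)$ is weakly reflexible --- with the two reflexibility types of Definition~\ref{def:reflexible} matching $\epsilon=0$ and $\epsilon=1$. This reflection is an extra involution normalising $\ZZ_2^d$ and reversing the block order, producing the stabiliser $\ZZ_2\ltimes\ZZ_2^d$ in the reflexible case and $\ZZ_2^d$ otherwise. Finally, the correspondence between $N$-invariant subspaces and divisors of $\Delta_{n/d,\epsilon}(x)$ (respecting reflexibility when the reflection is present) turns minimality of $N$ as a normal subgroup into maximality of $g_d(x)$ as a divisor, respectively as a weakly reflexible divisor, of $\Delta_{n/d,\epsilon}(x)$.

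The main obstacle is the rigidity assertion: that the stabiliser in $\N_{\Aut(\Gamma)}(N)$ equals $\ZZ_2^d$ (respectively $\ZZ_2\ltimes\ZZ_2^d$) with no further vertex-fixing automorphisms, and that it is arc-transitive exactly in the weakly reflexible case. This requires showing that every liftable automorphism of $\DC$ is a product of the rotation lift, the block-negations, and (when available) the reflection lift, which in turn rests on translating the elementary-abelian lifting condition precisely into the (weak) reflexibility of the generating polynomial --- pinning down the exact role of $d$ and the correct pairing of the two reflexibility types with $\epsilon\in\{0,1\}$.
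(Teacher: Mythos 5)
You have a genuine gap at the very step on which everything else rests: the ``gauge transformation'' making the two parallel arcs over each edge carry opposite voltages $\pm u_{j+1}$. This is not a free normalisation. Changing the voltage assignment within its equivalence class alters $\zeta(a_j)+\zeta(b_j)$ only by a telescoping term $2(f(j+1)-f(j))$, so the antiparallel form is achievable if and only if the voltage of the homology class $c_*=a_0+\cdots+a_{n-1}+b_0+\cdots+b_{n-1}$ vanishes, and this is invariant under equivalence and isomorphism of coverings. It can genuinely fail for simple, connected covers admitting even arc-transitive lifting groups: take any $\Gamma_{g(x)}$ with voltage group $\ZZ_p^{r-1}$ and append one extra $\ZZ_p$-coordinate that is \emph{equal} on the two arcs over each edge (say $1$ over the edge $j=0$ and $0$ elsewhere); the result is still simple, connected and admissible for the same group, but $\zeta^*(c_*)\neq 0$, so no gauge produces your normal form. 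What excludes this under the hypotheses of the theorem is precisely minimality of $N$ together with simplicity of $\Gamma$ --- the content of the paper's proof of Theorem~\ref{thm:min-const}, where a minimal invariant subspace is shown either to lie in $E$ (giving the antiparallel form, i.e.\ an $\epsilon$-cyclic code) or to equal $\langle e_n\rangle$ (excluded because the cover would be nonsimple). Your proposal never invokes minimality or simplicity at this point, yet both your bound $r\le n$ (the Betti number of $\DC$ is $n+1$, so connectivity alone gives only $r\le n+1$) and the identification $\Gamma\cong\Gamma_{g(x)}$ depend on it. A related slip: the annihilator of the cyclic module $N=\ZZ_p[\phi]u_1$ is generated by the degree-$r$ \emph{check} polynomial $\Delta_{n,\epsilon}(x)/g(x)$, not by $g(x)$ of degree $n-r$; a cyclic module whose annihilator has degree $n-r$ would have dimension $n-r$, not $r$, so your statement is internally inconsistent as written (though repairable by the standard generator/check duality).

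The second gap is that the hardest assertions of the theorem are described rather than proved. You correctly isolate them as ``the main obstacle'': that the kernel of the action on $V(\DC)$ of the maximal group that lifts is no larger than $\ZZ_2^d$, that the normaliser is arc-transitive \emph{only if} $g(x)$ is weakly reflexible, and that minimality of $N$ translates into maximality of $g_d(x)$ as a divisor, resp.\ weakly reflexible divisor, of $\Delta_{n/d,\epsilon}(x)$. These are exactly Claims~3--5 in the paper's proof of Theorem~\ref{thm:maxG-kernel}, and they require real machinery: the exact-period argument (any $\tau_L$ that lifts forces coefficients of $g(x)$ to vanish, pinning the kernel to period exactly $d$), the normal form for the lifted reflection (Lemma~\ref{lem:special-J}), and the $\Expn$-bookkeeping lemmas of Section~\ref{subsec:polynomials} needed to pass between divisors of $\Delta_{n,\epsilon}(x)$ and of $\Delta_{n/d,\epsilon}(x)$ when $d$ changes from one divisor to another. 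Deferring these leaves the ``only if'' directions and the stabiliser computation unproven. Two further inaccuracies in this part of your sketch: the two reflexibility types do not correspond to $\epsilon=0$ and $\epsilon=1$ --- they correspond to the two possible normal forms of the lifted reflection ($\tau_L=\tau_{\ZZ_n}$ versus the $2d$-periodic $\tau_L$, which requires $2d\mid n$), and both can occur for either $\epsilon$; and your roots-of-unity argument for $d\mid n$ breaks down when $p\mid d$, since $\Delta_{n,\epsilon}(x)$ is then inseparable (the paper's Lemma~\ref{lem:d-divides-n} avoids this with a purely coefficient-wise argument).
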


\begin{figure}
\begin{center}
\includegraphics[width=10cm]{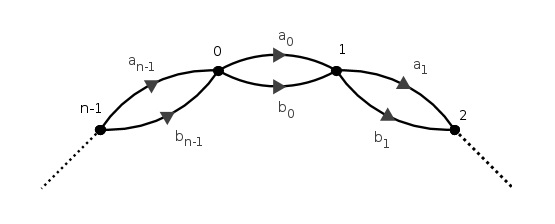}
\caption{Doubled cycle $\DC$ with arcs $a_i$, $b_i$.}
\label{fig:dcikli}
\end{center}
\end{figure}

The method that enabled us to resolve the unsettled cases of \cite[Theorems 1.1 and 1.2]{GP1} is that of lifting groups along elementary-abelian covering projections of graphs, as developed in \cite{MMP}. In particular, Theorem~\ref{thm:main} is derived from  Theorems~\ref{thm:min-const} and \ref{thm:maxG-kernel}  stated in the language of graph covers. To this end we need to introduce some further notation.
By $\DC$ we denote the {\em doubled cycle of length $n$}, that is, the multigraph with the vertex-set $\ZZ_n$ and two distinct edges between vertices $i$ and $i+1$, $i \in \ZZ_n$, with $a_i,b_i$ denoting the respective arcs from $i$ to $i+1$ (see Figure~\ref{fig:dcikli}).
Finally, we denote by $\wp_{g(x)}$ the covering projection ${\wp}_{g(x)}\colon \Gamma_{g(x)}\to \DC$ that maps an arc $((v,j),(v+ u_{j+1},j+1))$ to $a_j$ and an arc $((v,j),(v- u_{j+1},j+1))$ to $b_j$. The terminology related to graph covers appearing in the following two theorems is explained in Subsection~\ref{subsec:covproj}, and in more detail in \cite{MMP}.

\begin{theorem}
\label{thm:min-const}
Let $G$ be a vertex- and edge-transitive group of automorphisms of $\DC$,  and let  $p$ be an odd prime. Then any minimal  $G$-admissible $p$-elementary abelian covering projection $\wp\colon \Gamma \to \DC$, where $\Gamma$ is connected and simple, is isomorphic to a covering projection ${\wp}_{g(x)}\colon \Gamma_{g(x)}\to \DC$  for some proper divisor $g(x)$ of $\Delta_{n,\epsilon} \in \ZZ_p[x]$ and some $\epsilon \in\{0,1\}$.
\end{theorem}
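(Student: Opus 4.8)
The plan is to recast the statement in the homological language of \cite{MMP} and then extract the cyclic/negacyclic code structure from the module that governs the cover. First I would recall that a connected $p$-elementary abelian $G$-admissible covering projection of $\DC$ is encoded by a $G$-invariant subspace $K$ of the first homology $H_1(\DC;\ZZ_p)\cong\ZZ_p^{\,n+1}$: the covering transformation group is the quotient module $D=H_1(\DC;\ZZ_p)/K$, the whole of $G$ lifts precisely because $K$ is $G$-invariant (the lifting criterion of \cite{MMP}), and the projection is \emph{minimal} exactly when $D$ is an irreducible $\ZZ_p[G]$-module. For a working basis of $H_1(\DC;\ZZ_p)$ I would take the $n$ digons $\delta_i=a_i-b_i$ together with one long cycle $\gamma=a_0+\dots+a_{n-1}$. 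The structural observation driving everything is that any automorphism of $\DC$ carries the arc-pair of one slot to the arc-pair of another, up to reversal and swap; hence $\delta_i\mapsto\pm\delta_{i'}$, the digon subspace $U=\langle\delta_0,\dots,\delta_{n-1}\rangle$ is $G$-invariant, and $H_1(\DC;\ZZ_p)/U$ is one-dimensional.

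Next I would pin down $D$. Since $\Gamma$ is simple iff the two arcs over each slot carry distinct voltages, i.e.\ iff each $\delta_i$ has nonzero image in $D$, we get $U\not\subseteq K$; as $D$ is irreducible, the image of $U$ is all of $D$, so $D$ is an irreducible quotient of $U$ and $r=\dim D\le\dim U=n$. The subspace $U$ carries the intrinsic cyclic shift $S\colon\delta_i\mapsto\delta_{i+1}$, with $S^n=\id$, identifying $U\cong\ZZ_p[x]/(x^n-1)$. To realise $\Gamma$ as some $\Gamma_{g(x)}$ I need $D$ to inherit a compatible shift, i.e.\ the sublattice $U\cap K$ must be invariant under $S$, possibly after twisting $S$ by a global sign so that the induced operator $\bar S$ on $D$ satisfies $\bar S^n=(-1)^\epsilon\id$ for some $\epsilon\in\{0,1\}$. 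Granting this, $D\cong\ZZ_p[x]/(h(x))$ for a divisor $h(x)\mid\Delta_{n,\epsilon}(x)$ of degree $r$, and $g(x)=\Delta_{n,\epsilon}(x)/h(x)$ is the desired proper divisor, of degree $m=n-r$, with $\epsilon$ recording whether the shift on the cover wraps with sign $+1$ (cyclic) or $-1$ (negacyclic).

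It then remains to normalise the voltage assignment so that the cover is literally $\wp_{g(x)}$. Here I would first kill the long-cycle direction: the vector $w=\sum_i(a_i+b_i)=2\gamma-\sum_i\delta_i$ is fixed up to sign by every automorphism of $\DC$, so it spans a $G$-invariant line; its image in the irreducible module $D$ must therefore vanish as soon as $r\ge2$, that is $w\in K$. A short cochain computation shows that $w\in K$ is exactly the solvability condition for a gauge transformation (a relabelling of the fibres) after which $\zeta(a_i)=-\zeta(b_i)=u_{i+1}$ for all $i$, which is the sign pattern of Definition~\ref{def:matrix}. The shift relation on $D$ together with $D\cong\ZZ_p[x]/(h)$ then identifies the voltage vectors $u_1,\dots,u_n$ with the columns of the banded generator matrix $M_{g(x)}$, by way of the standard correspondence between the generating polynomial $g(x)$ of a cyclic (resp.\ negacyclic) code and multiplication by $x$ on $\ZZ_p[x]/(h)$; this produces the required isomorphism $\wp\cong\wp_{g(x)}$ of covering projections. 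The leftover case $r=1$ I would dispatch by hand, absorbing the residual long-cycle contribution by composing with a reflection of the base $\DC$.

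The two homological reductions are essentially formal once \cite{MMP} and the $G$-invariance of $U$ are in place; the real work, and the step I expect to be the main obstacle, is establishing the shift-compatibility and the correct sign $\epsilon$ in the second paragraph. The delicate point is that a vertex- and edge-transitive $G$ need not contain the one-step rotation of $\DC$ (already for even $n$ the image of $G$ in $\Dih_n$ can be a transitive subgroup avoiding that rotation), so the cyclic structure of $D$---and hence the very existence of $g(x)$---cannot simply be imported from a shift lying inside $G$, but must be forced intrinsically, using the signed-permutation action of $G$ on $U$ together with edge-transitivity. Equivalently, one must show that the extra shift symmetry visible in every $\Gamma_{g(x)}$ (and ultimately in $\N_{\Aut(\Gamma)}(N)$, cf.\ Theorem~\ref{thm:maxG-kernel}) is already imposed by minimality and simplicity. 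Verifying that the resulting abstract module isomorphism $D\cong\ZZ_p[x]/(h)$ is realised by an honest isomorphism of covers---so that gauge freedom, base automorphisms, and the choice of $\GL(r,p)$-frame on $D$ together bring the voltages into the exact banded form $M_{g(x)}$---is the remaining bookkeeping.
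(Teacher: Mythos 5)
Your homological setup is sound and is essentially the dual of the paper's use of the machinery of \cite{MMP} (you work with the kernel $K\le H_1(\DC,\ZZ_p)$ and the quotient module $D$, the paper with the invariant row space $W$ of the transposed action), and your observation that simplicity plus irreducibility forces the digon space $U$ to surject onto $D$ reproduces, for $r\ge 2$, the paper's exclusion of the invariant line $\langle e_n\rangle$. But there is a genuine gap exactly at the step you yourself flag as ``the main obstacle'': you never establish the shift-compatibility that makes $U\cap K$ an ideal of $\ZZ_p[x]/(\Delta_{n,\epsilon}(x))$, i.e.\ the very existence of $g(x)$ and of the sign $\epsilon$. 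Moreover, your diagnosis of why this is hard rests on a false claim. A subgroup of $\Dih(n)$ that is vertex-transitive on the cycle $C_n$ but avoids the one-step rotation is necessarily of the form $\langle\rho^2,\rho^j\sigma\rangle$ with $n$ even and $j$ odd, and such a group preserves the partition of the edges $\{i,i+1\}$ of $C_n$ into those with $i$ even and those with $i$ odd, hence is \emph{not} edge-transitive. Since both vertex- and edge-transitivity of $G$ on $\DC$ descend to the quotient cycle, the image of $G$ in $\Dih(n)$ must contain the one-step rotation, so $G$ contains some $\rho\tau_C$. This is precisely how the paper closes your gap: by Lemma~\ref{lem:tau-C} one writes $\tau_C=\tau_X\tau_{X+1}\tau_0^{\epsilon}$, so after conjugating by $\tau_X$ (legitimate, because conjugate subgroups lift along isomorphic covering projections and the theorem is stated up to isomorphism) one may assume $\rho\tau_0^{\epsilon}\in G$. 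This element acts on the digon space exactly as the $\epsilon$-twisted shift $R_\epsilon$, its preservation of $U\cap K$ (equivalently of $W$) is then automatic from $G$-invariance, and this is also the unique source of $\epsilon$; from there $W$ is an $\epsilon$-cyclic code with generating polynomial $g(x)$ and the identification with $\wp_{g(x)}$ follows.

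A secondary but also genuine defect is your treatment of the long-cycle class when $r=1$. The condition $\zeta(c_*)\neq 0$ (in the paper's notation $c_*=\sum_i a_i+\sum_i b_i$, your $w$) is a homological invariant of the covering projection: any automorphism of $\DC$ sends $c_*$ to $\pm c_*$, so it cannot be ``absorbed by composing with a reflection of the base''. What actually happens is that this case never occurs for simple covers, for any $r$: the paper's common-eigenvector computation, carried out separately for each of the normalized generating sets $\{\rho,\sigma\tau_{\ZZ_n}\}$, $\{B,\rho\tau_0^{\epsilon}\}$, $\{B,\rho\tau_0^{\epsilon},\sigma\tau_0^{\epsilon}\tau_J\}$ supplied by Lemma~\ref{lem:AT}, shows that a one-dimensional invariant subspace not contained in $E$ must be $\langle e_n\rangle$, whose cover has parallel edges. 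Proving this requires exactly the group-theoretic normalization you are missing in the first place, so the two gaps are really one: without Lemmas~\ref{lem:tau-C} and~\ref{lem:AT} (or an equivalent), neither the cyclic-code structure nor the vanishing of the voltage on $c_*$ can be forced.
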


\begin{theorem} 
\label{thm:maxG-kernel}
Let $G$ be the maximal group that lifts along the covering projection $\wp_{g(x)}\colon \Gamma_{g(x)} \to \DC$, where  
 $g(x)$ is a proper divisor of $\Delta_{n,\epsilon}(x) \in \ZZ_p[x]$ and $\epsilon \in\{0,1\}$.  

Let $d = \Expn(g(x))$. Then the kernel of the action of $G$ on the vertex set of $\DC$  is isomorphic to  $\ZZ_2^d$, and $G$ is vertex- and edge-transitive. Moreover, $G$ is arc-transitive if and only if  $g(x)$ is weakly reflexible. In particular, $G = \Aut(\DC)$ if and only if $g(x) =1$.

Additionally, if $g(x)$ is not weakly reflexible, then $\wp_{g(x)}$  is a minimal $G$-admissible covering if and only if the associated polynomial $g_d(x)$ is a maximal  divisor of  $\Delta_{n/d,\epsilon}(x)$.
If $g(x)$ is weakly reflexible, then $\wp_{g(x)}$  is a minimal $G$-admissible covering if and only if $g_d(x)$ is a maximal weakly reflexible divisor of 
 $\Delta_{n/d,\epsilon}(x)$. In particular, the covering arising from $g(x) = 1$ is minimal
$(\Aut(\DC)$-admissible.
\end{theorem}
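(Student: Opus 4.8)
The plan is to run everything through the lifting criterion of \cite{MMP} for elementary-abelian covers, which turns the problem into linear algebra over $\ZZ_p$. First I would record that for $n\ge3$ one has $\Aut(\DC)\cong\ZZ_2^n\rtimes\Dih_n$, where the normal subgroup $K\cong\ZZ_2^n$ is the kernel of the action on $V(\DC)=\ZZ_n$, generated by the edge-swaps $\sigma_i$ exchanging the parallel arcs $a_i,b_i$, and a complement $\Dih_n$ acts dihedrally on the underlying $n$-cycle. The projection $\wp_{g(x)}$ is the regular $\ZZ_p^r$-cover with voltages $\zeta(a_j)=u_{j+1}$, $\zeta(b_j)=-u_{j+1}$, where $u_{j+1}$ is the $(j+1)$-th column of $M_{g(x)}$; on the natural basis of the cycle space (the $n$ digons $\delta_i=a_ib_i^{-1}$ together with one global cycle $A=a_0\cdots a_{n-1}$) the induced voltages are $2u_{i+1}$ and $\sum_j u_{j+1}$. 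Since the rows of $M_{g(x)}$ span the (nega)cyclic code $C=\gen{g(x)}\le\ZZ_p^n$, the columns $u_1,\dots,u_n$ span $\ZZ_p^r$ (the cover is connected) and their linear relations are exactly $C^\perp$. A short substitution argument with $x\mapsto\omega x$, $\omega$ a primitive $d$-th root of unity, shows $d\mid n$, hence also $d\mid r$ since $d\mid m$; here $d=\Expn(g)$ equals, for nonconstant $g$, the gcd of the exponents occurring in $g$.

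Next I would compute the kernel. By the criterion, $\alpha\in\Aut(\DC)$ lifts iff its action on the cycle space is intertwined by some $\hat\alpha\in\GL(r,p)$ with the voltage map. Applied to an edge-swap $\sigma_S=\prod_{i\in S}\sigma_i$, the global cycle $A$ imposes no extra condition, so $\sigma_S$ lifts iff the diagonal sign map $D_\epsilon$ ($\epsilon_c=-1\iff c-1\in S$) extends to $\hat\sigma_S\in\GL(r,p)$ with $u_c\mapsto\epsilon_c u_c$, i.e. iff $D_\epsilon$ respects $C^\perp$, equivalently preserves $C$. Thus the kernel of $G$ on $V(\DC)$ is the group $\Sigma\le\ZZ_2^n$ of diagonal $\pm1$ symmetries of $C$, and it remains to prove $\Sigma\cong\ZZ_2^d$. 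For the inclusion $\ge$: any $\epsilon$ constant on residue classes mod $d$ multiplies each generator $x^kg(x)$ by a scalar (as $g$ involves only exponents divisible by $d$ and $d\mid n$ keeps residues stable under reduction), hence preserves $C$, giving $2^d$ patterns. For the reverse bound I would use that $C$, being (nega)cyclic, is shift-invariant, so $\Sigma$ is invariant under cyclic shifts of the pattern; since $g$ has minimal degree in $C$, every shift $S^k\epsilon\in\Sigma$ multiplies $g$ by a scalar, forcing $\epsilon$ to be constant on each translate $\mathrm{supp}(g)-k$. Because $0\in\mathrm{supp}(g)$, the connectivity relation these translates induce on $\ZZ_n$ has classes the cosets of $\gen{\mathrm{supp}(g)}=d\,\ZZ_n$, so $\epsilon$ is constant mod $d$ and $\Sigma\cong\ZZ_2^d$.

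For transitivity I would show the rotation $\rho\colon i\mapsto i+1$ lifts because the cyclic shift preserves $C$, and the total swap $\sigma_{\ZZ_n}$ (constant pattern, realized by $-I\in\GL(r,p)$) lifts as well; together they act transitively on $V(\DC)$ and on the $2n$ edges, so $G$ is vertex- and edge-transitive. For arc-transitivity I would analyse when a reflection $\tau$ of the $n$-cycle lifts: reversing the cyclic order sends the column pattern to the reciprocal pattern, so $\tau$ (composed with a suitable swap from $\Sigma$) lifts iff the reversal of $C$ agrees with $C$ up to the sign freedom in $\Sigma$. Unwinding this through the associated polynomial shows it is equivalent to $g_d$ being reflexible, with the two reflexibility types and the sign $(-1)^\epsilon$ matching the cyclic/negacyclic distinction; this is precisely weak reflexibility of $g$, giving $G$ arc-transitive $\iff$ $g$ weakly reflexible. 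Finally $d=n\iff g=1$ (since $d\le\deg g\le n-1$ for nonconstant $g$), and for $g=1$ one has $C=\ZZ_p^n$, $C^\perp=0$, so every automorphism lifts and $G=\Aut(\DC)$.

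For minimality I would use that $G$-admissible intermediate covers $\Gamma_{g(x)}\to\Gamma'\to\DC$ correspond to proper $G$-submodules of the covering-transformation module $\ZZ_p^r$, which as a module over the shift is isomorphic to $C$ (equivalently $\ZZ_p^n/C^\perp$), with $G$ acting through $\rho$ (the shift $x$), through $\Sigma$, and—in the weakly reflexible case—through the lifted reflection. Because $g=g_d(x^d)$ and $\Delta_{n,\epsilon}(x)=\Delta_{n/d,\epsilon}(x^d)$, the shift together with $\Sigma$ makes this a module over $\ZZ_p[y]/(\Delta_{n/d,\epsilon})$ with $y=x^d$, whose $G$-submodules correspond exactly to divisors of $\Delta_{n/d,\epsilon}$ lying strictly above $g_d$ (closed under reflexibility precisely when $g$ is weakly reflexible). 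Hence $\wp_{g(x)}$ is minimal iff the module is $G$-irreducible, i.e. iff $g_d$ is a maximal divisor (respectively a maximal weakly reflexible divisor) of $\Delta_{n/d,\epsilon}$; for $g=1$ we have $d=n$, $g_d=1$, nothing lies above it, and the cover is minimal $\Aut(\DC)$-admissible. The main obstacle I anticipate lies in these last two parts: correctly bookkeeping the reciprocal-polynomial computation for the reflection (the type-1/type-2 and $\epsilon$ dictionary) and, above all, making the reduction of the length-$n$ module to a length-$(n/d)$ one precise enough that "$G$-invariant submodule'' translates cleanly into "(weakly reflexible) divisor of $\Delta_{n/d,\epsilon}$ above $g_d$''. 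The kernel count, by contrast, is the clean linear-algebra core and I expect it to go through as sketched.
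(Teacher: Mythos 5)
Your strategy coincides with the paper's: use the MMP criterion to turn lifting into invariance of the $\epsilon$-cyclic code $C=(g(x))$, identify the kernel $G\cap K$ with the diagonal $\pm1$ symmetries of $C$, and read arc-transitivity and minimality off the polynomial. Your kernel computation is correct and is essentially the paper's Claim~3 in different clothing: where the paper shows $\alpha_s=0$ for every $s$ not divisible by the exact period of $B$, you show that a diagonal symmetry must be constant on every translate of $\mathrm{supp}(g)$ and then use $\gen{\mathrm{supp}(g)}=d\,\ZZ_n$; both arguments rest on the minimality of $\deg g$ in the code. One small but real slip: for $\epsilon=1$ the plain rotation $\rho$ does \emph{not} lift in general (indeed $\rho\in G$ together with $\rho\tau_0\in G$ would force $\tau_0\in G$, hence $d=n$ and $g=1$); a negacyclic code is invariant under the signed shift $R_1$, not the plain shift, and it is $\rho\tau_0$ that lifts, which is why the paper carries $\rho\tau_0^{\epsilon}$ throughout. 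This does not endanger vertex- and edge-transitivity, but it is the first symptom of the sign bookkeeping you postpone.

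The two items you flag as ``anticipated obstacles'' are genuine gaps, and they are exactly where the paper has to work hardest. For arc-transitivity, ``the reversal of $C$ agrees with $C$ up to the sign freedom in $\Sigma$ iff $g_d$ is reflexible'' is the statement to be proved, not a proof. The missing device is Lemma~\ref{lem:special-J}: because $B$ is the \emph{largest} subgroup of exact period $d$, the sign pattern accompanying a reflection can be normalized to exactly two canonical forms, $\tau_{\ZZ_n}$ or (only when $2d\mid n$) $\Pi_0^{d-1}\tau_{[i,2d]}$, and these two forms produce precisely the type-1 and type-2 conditions (\ref{lambda0}) on $g_d$. Without this normalization, ``up to sign freedom'' quantifies over $2^d$ patterns and cannot be matched to the two reflexibility types of Definition~\ref{def:reflexible} (note, for instance, the constraint $2d\mid n$ that the type-2 case imposes). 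For minimality, your dictionary ``$G$-invariant submodules $\leftrightarrow$ divisors of $\Delta_{n/d,\epsilon}(x)$ above $g_d(x)$'' presupposes that every $G$-invariant ideal $0<(q(x))<(g(x))$ has a generator that is a polynomial in $x^d$; this is not automatic from the base change $y=x^d$, and it is exactly what the paper must establish. It does so by a bootstrap: apply part (B) of the theorem to the cover defined by $(q(x))$, whose maximal lifting group $G'$ contains $G$, so its kernel satisfies $B'\geq B$, whence $\Expn(q(x))$, being the exact period of $B'$, is a multiple of $d$; then Lemma~\ref{lem:poly-in-d} yields $Q(x)=g_d(x)k(x)$. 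In the weakly reflexible case one additionally needs part (C) applied to $G'$ together with the exponent arithmetic of Lemma~\ref{lem: Q-ofx} to conclude that $Q(x)$ is \emph{weakly reflexible}, not merely a divisor. Your sketch names the destination but supplies neither of these two mechanisms, so as it stands the last two assertions of the theorem remain unproved.
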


\begin{example}
Let $n=3$, $p=7$ and $\epsilon=0$. In the table below we list all proper divisors of $\Delta_{3,0}(x)=x^3-1\in\ZZ_7[x]$, the corresponding graphs and their symmetries.
For each polynomial divisor $g(x)$, the columns wr and mwr denote whether $g(x)$ is weakly reflexible or maximal weakly reflexible, respectively. We identify the graph $\Gamma=\Gamma_{g(x)}$ by its code in the Census of edge-transitive 4-valent graphs \cite{PW census}, and denote its symmetry type by AT (arc-transitive) and HT (half-arc-transitive, that is, vertex- and edge- but not arc-transitive).  By $G$ we denote the maximal subgroup of $\Aut\DC$ (up to conjugation) that lifts along $\wp_{g(x)}$, and by $\tilde{G}$ its lifted group.
Observe that in rows 2 and 3, the graphs are arc-transitive even though the polynomial is not weakly reflexible. Indeed, it turns out in this case that the lifted group $\tilde{G}$ is much smaller than the full automorphism group.(For the notation in the $G$-column see 
Subsection~\ref{subsec:AutDC}.)
\begin{center}
\begin{tabular}{c|lcclrllrr}
Row& $g(x)$ & wr & mwr & graph $\Gamma$ &$|V(\Gamma)|$& sym & $G$ & $|\tilde{G}|$ & $ |\Aut \Gamma|$ \\ \hline
1& $x^2+x+1$ & Y&Y&$C4[21,1]$&21& AT & $\gen{\tau_{\ZZ_3},\rho,\sigma}$ & 84&84\\
2& $x^2+4x+2$ & N&N&$C4[21,2]$&21& AT  & $\gen{\tau_{\ZZ_3},\rho}$& 42 &336\\
3& $x^2+2x+4$& N &N&$C4[21,2]$&21& AT & $\gen{\tau_{\ZZ_3},\rho}$&42 &336\\
4& $x+5$ & N&N&$C4[147,5]$&147& HT&   $\gen{\tau_{\ZZ_3},\rho}$&294&294 \\
5& $x+3$ &N&N&$C4[147,5]$&147& HT&  $\gen{\tau_{\ZZ_3},\rho}$ &294&294\\
6& $ x+6$&Y&Y&$C4[147,6]$&147& AT&   $\gen{\tau_{\ZZ_3},\rho,\sigma}$& 588&588\\
7& $1$&Y&N&-- &1029& AT & $\gen{\tau_{\ZZ_3},\rho,\sigma}$ & 8232 & 16464\\
\end{tabular}
\end{center}
\end{example}

A brief account of the theory of lifting groups along covering projections is given in Section~\ref{sec:prelim}. In Section~\ref{sec:Proof-4-5-3}, these results are applied to prove Theorems~\ref{thm:min-const} and ~\ref{thm:maxG-kernel}, and to complete the proof of Theorem~\ref{thm:main}. Finally, two further examples of our construction corresponding  to the results of \cite{GP1} are described in  Section~\ref{sec:GardPra}.

\section{Preliminaries}
\label{sec:prelim}

Let $\Gamma$ be a graph and $N\leq\Aut(\Gamma)$ some automorphism group. Following \cite{GP0}, we let $\Gamma_N$ be the simple graph whose vertices are the orbits $v^N$ of $N$ on $V(\Gamma)$, and two distinct orbits $v^N$, $u^N$ are adjacent whenever there is an edge in $\Gamma$ between them: $v'\sim u'$ for some $v'\in v^N$, $u'\in u^N$. (Note that we shall distinguish between the \emph{simple quotient} $\Gamma_N$ and the \emph{quotient} $\Gamma/N$ as defined in Section~\ref{subsec:covproj}, which can be a nonsimple graph.)

\subsection{Graphs and their automorphisms}

\label{subsec:graphs}

Although we are mainly interested in simple graphs, it is convenient to allow loops, multiple edges and semiedges for detailed discussion of covering projections. Following~\cite{MMP}, we define a {\em graph} to be an ordered $4$-tuple $(D,V, \beg,\inv)$ where
$D$ and $V \neq \emptyset$ are disjoint finite sets of {\em darts} and {\em vertices}, respectively, $\beg\colon D \to V$ is a mapping
which assigns to each dart $x$ its {\em initial vertex} $\beg x$, and $\inv\colon D \to D$ is an involution which interchanges
every dart $x$ with its {\em inverse dart}, also denoted by $x^{-1}$. If $\Gamma$ is a graph, then we let $\D(\Gamma)$,  $\V(\Gamma)$, $\beg_\Gamma$ and $\inv_\Gamma$ denote its dart-set, its vertex-set, its $\beg$ function and its $\inv$ function, respectively.

The orbits of $\inv$ are called {\em edges}. The edge containing a dart $x$ is called a {\em semiedge} if $x^{-1} = x$, a {\em loop} if $x^{-1} \neq x$ while  $\beg x^{-1} = \beg x$, and  is called  a {\em link} if $x^{-1} \neq x$ and $\beg x^{-1} \neq \beg x$. The {\em endvertices of an edge} are the initial vertices of the darts contained in the edge. Two links are {\em parallel} if they have the same endvertices. A graph is called \emph{simple} if it has no semiedges, loops or parallel links. The cardinality $|V|$ of $V$ is called the {\em order} of $\Gamma$. The {\em neighborhood} of a vertex $v$, denoted $\Gamma(v)$,  is the set consisting of all the darts $x$ with $\beg x=v$, and the cardinality of $\Gamma(v)$ is called the {\em valence} of $v$. A graph is {\em tetravalent} if all of its vertices have valence $4$.

Note that in a simple graph each edge is uniquely determined by its endvertices, implying that simple graphs can be given in the usual manner, by specifying the vertex set $V$ and the set $E$ of unordered pairs of vertices corresponding to the edges of the graph. Moreover, a dart $x$ of a simple graph is uniquely determined by the ordered pair $(\beg x, \beg x^{-1})$. Since ordered pairs of adjacent vertices in a simple graph are usually called {\em arcs}, we shall use the term {\em arc} as a synonym for a {\em dart}.

A \emph{graph morphism} $f \colon \Gamma \to \Gamma'$,  is a function $f\colon \V(\Gamma) \cup \D(\Gamma) \to \V(\Gamma') \cup \D(\Gamma')$  that maps $\V(\Gamma)$ to $\V(\Gamma')$ and  $\D(\Gamma)$ to $\D(\Gamma')$ such that  $f\,\beg_{\Gamma'} = \beg_\Gamma\,f$  and $f\,\inv_{\Gamma'} = \inv_\Gamma\,f$. A bijective morphism from $\Gamma$ to $\Gamma$ is called an {\em automorphism};  the set of all automorphisms is denoted by $\Aut(\Gamma)$.  The image of $x\in \V(\Gamma) \cup \D(\Gamma)$ under $\alpha\in \Aut(\Gamma)$ is denoted $x^\alpha$,  and the product $\alpha\beta$ of two automorphisms $\alpha, \beta \in \Aut(\Gamma)$ is defined by  $x^{(\alpha\beta)} = (x^\alpha)^\beta$; under this product, the set $\Aut(\Gamma)$ becomes a group, called the \emph{automorphism group} of $\Gamma$. If $G\leq \Aut(\Gamma)$, then we say that $\Gamma$ is \emph{$G$-vertex-, $G$-edge-, or $G$-arc-transitive}, if $G$ is transitive on the sets of vertices, edges, or darts (arcs) of $\Gamma$, respectively.

\subsection{Quotients, covering projections and voltage graphs}
\label{subsec:covproj}

For a group $G$ acting on a set $\Omega$, let $\Omega/G$ denote the set of orbits of $G$ on $\Omega$. Let 
$\Gamma$ be a connected graph and let $N \leq \Aut(\Gamma)$. In \cite{GP0} the quotient graph $\Gamma_N$
is defined as the simple graph whose vertex set is $\V(\Gamma)/N$, and two distinct orbits $v^N,u^N \in \V(\Gamma)/N$ are adjacent if and only if there exist $v' \in v^N$ and $u' \in u^N$ that are adjacent in $\Gamma$.
We refer to this  quotient as the {\em simple quotient of $\Gamma$ by $N$}. 
However, in this paper  it is convenient to use the following definition. The {\em quotient graph} $\Gamma/N$  {\em of $\Gamma$ with respect to $N$} is the quadruple 
$$
\Gamma/N = (\V(\Gamma)/N, \D(\Gamma)/N, \beg_N,\inv_N)
$$
where $\beg_N(x^N)=(\beg_\Gamma(x))^N$ and $\inv_N(x^N)=(\inv_\Gamma(x))^N$ for every $x\in \D(\Gamma)$. The function  $q_N \colon \V(\Gamma) \cup \D(\Gamma) \to \V(\Gamma)/N \cup \D(\Gamma)/N$ mapping $x$ to $x^N$ is then a surjective graph morphism, called the {\em quotient projection with respect to $N$}.

A surjective graph morphism $\p\colon \tilde\Gamma \to \Gamma$ of connected graphs is called a \emph{covering projection} if it is locally bijective, that is, if the neighbourhood $\tilde\Gamma(\tilde u)$ of each vertex $\tilde u\in \V(\tilde\Gamma)$ is mapped bijectively onto the neighbourhood $\Gamma(\tilde u f)$.  The graph $\tilde \Gamma$ is then called a \emph{covering graph} of the \emph{base graph} $\Gamma$, and the preimage $x \p^{-1}\subseteq \V(\tilde\Gamma)$ is called a {\em vertex fibre} if {\em $x\in\V(\Gamma)$}, and is called a {\em dart fibre} if $x\in \D(\Gamma)$. Two covering projections $\p\colon \tilde \Gamma\to \Gamma$ and $\p'\colon\tilde\Gamma'\to \Gamma$ are {\em isomorphic} if $\p\, \alpha =\tilde \alpha\,\p'$ for some automorphism $\alpha\in\Aut(\Gamma)$ and isomorphism $\tilde\alpha\colon \tilde\Gamma\to\tilde\Gamma'$. If $\alpha$ can be chosen to be the identity mapping $\id_\Gamma$, then the projections $\p$ and $\p'$ are {\em equivalent}.
The subgroup $K\leq\Aut(\tilde\Gamma)$ of all automorphisms fixing each vertex fibre and each dart fibre setwise is called the \emph{group of covering transformations} of $\p$. Note that $K$ is  semiregular in its action on the vertex set. 
If $K$ is moreover transitive (and hence regular) on each fibre, then $\p$ is a \emph{regular} (or more precisely, a {\em $K$-regular}) covering projection. 
Further, the quotient projection $q_K \colon \tilde{\Gamma} \to \tilde{\Gamma}/K$ is then a covering projection equivalent to $\p$. The following lemma (which fails to be true for the simple quotient $\Gamma_N$) holds. 

\begin{lemma}
\label{lem:semiregular}
Let $N$ be a group of automorphisms of a connected graph $\Gamma$ and let $q_N \colon \Gamma \to \Gamma/N$ be the corresponding quotient projection. Then $q_N$ is a covering projection if and only if $N$ acts semiregularly on $\V(\Gamma)$.
\end{lemma}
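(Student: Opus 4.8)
The plan is to use the fact, already recorded in the excerpt, that $q_N$ is a surjective graph morphism between connected graphs (the quotient of a connected graph by an automorphism group is again connected). Consequently the only property distinguishing $q_N$ from an arbitrary such morphism is \emph{local bijectivity}, namely that for each vertex $v\in\V(\Gamma)$ the restriction $q_N\colon \Gamma(v)\to(\Gamma/N)(v^N)$, $x\mapsto x^N$, between the two dart-neighbourhoods is a bijection. I would therefore fix $v$ and analyse this restriction directly, reducing the whole statement to a condition on the point stabilizer $N_v$.

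Local surjectivity holds unconditionally: given a dart-orbit $x^N$ with $\beg_N(x^N)=v^N$, one has $(\beg x)^N=v^N$, so some $g\in N$ sends $\beg x$ to $v$, and then $x^g\in\Gamma(v)$ maps onto $x^N$. The content is thus entirely in injectivity. Here I would observe that two darts $x_1,x_2\in\Gamma(v)$ satisfy $x_1^N=x_2^N$ precisely when $x_2=x_1^{g}$ for some $g\in N$; comparing initial vertices then gives $v=\beg x_2=(\beg x_1)^g=v^g$, so $g\in N_v$. Hence injectivity at $v$ is equivalent to the assertion that every element of $N_v$ fixes every dart in $\Gamma(v)$. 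This reformulation is the key technical step and makes both implications transparent.

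For the \emph{if} direction, semiregularity means $N_v=\{\id\}$ for every $v$, so the above condition is vacuous and $q_N$ is a covering projection. The \emph{only if} direction is where the work lies, and I expect the propagation argument to be the main obstacle: local injectivity does not by itself force a vertex-fixing element to be trivial, only to fix all incident darts, so connectedness must be invoked to finish. Assuming $q_N$ is a covering projection but $N$ is not semiregular, I would take $g\in N$ with $g\neq\id$ and $v^g=v$. By the injectivity criterion $g$ fixes every dart of $\Gamma(v)$, hence (since an automorphism commutes with $\inv$, giving $(x^{-1})^g=(x^g)^{-1}$) every inverse dart $x^{-1}$ and thus every neighbour $\beg(x^{-1})$ of $v$. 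Each such neighbour is again a fixed vertex, so the same criterion applies there, and a straightforward induction on distance from $v$, using connectedness of $\Gamma$, shows that $g$ fixes every vertex and every dart. Thus $g=\id$, a contradiction, which establishes semiregularity and completes the equivalence.
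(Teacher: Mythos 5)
Your proof is correct. In fact, the paper states Lemma~\ref{lem:semiregular} without any proof at all --- it is invoked as a standard fact about quotient projections (cf.~\cite{MMP}) --- so there is no argument of the paper's to compare yours against; what you have written is the standard folklore proof, carried out completely. The reduction is sound at every step: local surjectivity of $q_N$ is automatic; local injectivity at a vertex $v$ is equivalent to the condition that the stabilizer $N_v$ fixes every dart in $\Gamma(v)$; semiregularity makes this condition vacuous, giving the ``if'' direction; and for the ``only if'' direction you correctly identified that connectedness is indispensable, propagating the fixed-point property from a fixed vertex to all of $\Gamma$ via the compatibility of automorphisms with $\beg$ and $\inv$, so that any vertex-fixing element of $N$ must be the identity.
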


A convenient way of describing an $N$-regular covering projection is by using {\em voltage assignments}. For  an abstract group $N$,  a function $\zeta\colon \D(\Gamma)\to N$ with the property that $\zeta(x^{-1})=\zeta(x)^{-1}$ is called a \emph{voltage function}. 
Suppose henceforth that $N\cong \ZZ_p^n$ for some prime $p$ and positive integer $n$. Then the voltage assignment $\zeta$ extends naturally to 
closed  walks, which can be viewed as the elements of the first homology group  $H_1(\Gamma,\ZZ_p)$. In this sense, $\zeta$ induces a linear mapping $\zeta^* \colon H_1(\Gamma,\ZZ_p) \to N$.
 Conversely, given a linear mapping $\bar{\zeta} \colon H_1(\Gamma,\ZZ_p) \to N$ and a spanning tree $T$,
 there exists a unique voltage assignment $\zeta$ taking trivial value on each dart of $T$ and
 such that $\bar{\zeta} = \zeta^*$. 

Given a voltage function $\zeta$ on a graph $\Gamma$ such that $\zeta^*$  is surjective,  let  $\Cov(N;\zeta)$  be the \emph{derived graph} with vertex set $\V(\Gamma)\times N$, dart set $\D(\Gamma)\times N$, and  the functions  $\beg$ and $\inv$  defined by  $\beg(x,g)=(\beg(x),g)$ and $(x,g)^{-1}=(x^{-1}, g\zeta(x))$. Then the derived graph is connected  and 
 the mapping $\wp_\zeta \colon \Cov(N;\zeta) \to \Gamma$ defined by  $\wp_\zeta(x,g)=x$, for $x \in V(\Gamma) \cup D(\Gamma)$, is a regular $N$-covering projection. 
It is well known that every regular $N$-covering projection $\p\colon\tilde\Gamma\to \Gamma$ is  equivalent to some \emph{voltage projection} $\p_\zeta\colon \Cov(N;\zeta) \to \Gamma$ as defined above.

If $\bar{\zeta} \colon H_1(\Gamma,\ZZ_p) \to N$ is a surjective linear mapping and $T_1$ and $T_2$ are two
spanning trees, then the corresponding voltage assignments $\bar{\zeta}_1$ and $\bar{\zeta}_2$ give rise
to equivalent covering projections $\wp_{\zeta_1}$ and $\wp_{\zeta_2}$. In this sense, we may think of
voltage assignments as being defined on $H_1(\Gamma,\ZZ_p)$ (see \cite{MMP} for details).

\subsection{Lifting and projecting automorphisms} 
\label{subsec:elemab}

We say that the automorphism $g \in\Aut(\Gamma)$ \emph{lifts along} a covering projection $\wp\colon \tilde\Gamma\to \Gamma$ if 
there exists an automorphism $\tilde{g}\in\Aut(\tilde\Gamma)$ such that $ \tilde{g}\, \wp = \wp\,g$. The subgroup $G\leq\Aut(\Gamma)$ lifts, if every $g\in G$ lifts, in which case we say that $\wp$ is $G$-\emph{admissible}; the collection of all lifts forms a group, \emph{the lifted group} $\tilde{G}$. 

We briefly recollect  certain facts about lifting and projecting groups along a connected regular $K$-covering projection $\tilde\Gamma\to \Gamma$  to be used later on.
For more information we refer the reader to \cite{Djok, MNS00, MP16}.
First, a group $G$ and its lifted group $\tilde G$ share  certain symmetry properties  such as vertex-, edge- or arc-transitivity. Second, the largest group that projects is the normalizer $N_{\Aut(\tilde\Gamma)}(K)$ of the group of covering transformations within the full automorphism group of $\tilde\Gamma$, and $N_{\Aut(\tilde\Gamma)}(K)$ is  the lift of the largest group that lifts. Third, a vertex stabilizer $\tilde{G}_{\tilde{v}}$ projects isomorphically onto the vertex stabilizer $G_v$, $v = \p(\tilde{v})$, while the lift of a stabilizer $G_v$ is isomorphic to a semidirect product $G_v \ltimes K$.   

Characterization of $G$-admissible covering projections in terms of voltage assignments is an important classification method. In our case of elementary abelian covers  we shall apply results of \cite[Theorem 6.2 and Corollary 6.3]{MMP}, which we summarize by the following steps:
\begin{enumerate}
\item{\it Matrix representation.}
Consider the first homology group $\HG_1(\Gamma,\ZZ_p)$ as an $n$-dimensional vector space over some prime field $\ZZ_p$, where 
$n = \beta(\Gamma)$  is the Betti number of the graph. Then every $g \in\Aut(\Gamma)$ induces an invertible linear transformation on $\HG_1(\Gamma,\ZZ_p)$, represented in a chosen basis ${\cal B}$ by a matrix $g^\#\in\ZZ_p^{n \times n}$. This defines a homomorphism $\#\colon \Aut(\Gamma)\to \GL(n,\ZZ_p)$;  we denote the image of $G \leq \Aut(\Gamma)$  by $G^\#=\#(G)$ .
\item{\it Invariant subspaces.}
 For all $g\in G$ (or some generating set $\text{gen}(G)$), consider the transposed matrices $(g^\#)^t$ and find their common invariant subspaces. There  is a bijective correspondence between $(G^\#)^t$-invariant subspaces and equivalence classes of covering projections. (The isomorphism classes of projections are the orbits of the action of $\#(\Aut \Gamma)^t$ on the set of $(G^\#)^t$-invariant subspaces). Additionally,  minimal invariant subspaces correspond to {\em minimal covers} in the sense that these cannot be further decomposed into a series of $G$-admissible covering projections. (Note that a minimal invariant subspace is by definition nontrivial; the trivial subspace  corresponds to the trivial covering and is excluded from our considerations.)
\item{\it Voltage assignments.} 
For each $(G^\#)^t$-invariant subspace $V$ of $\ZZ_p^{1\times n}$, choose some basis ${\cal B}(V)$ and let $M_{{\cal B}(V)}\in\ZZ_p^{d\times n}$ be the matrix whose rows are the corresponding base vectors. Then the columns of the matrix $M_{{\cal B}(V)}$ define the values of the voltage assignment $\zeta$ on the basis ${\cal B}$ of $\HG_1(\Gamma,\ZZ_p)$. Now the values of $\zeta$ on the dart set $D(\Gamma)$ are easily computed with some degree of freedom (tipically, the values on ${\cal B}$ are interpreted as the values on the cotree darts for some chosen  tree $T\leq \Gamma$ and $0$ elsewhere). 
\end{enumerate}
Applying the above  procedure, all pairwise nonequivalent (even  pairwise nonisomorphic) $G$-admissible covering projections with values in some elementary-abelian group $\ZZ_p^d$ are obtained.

\subsection{Cyclic and related codes}
\label{subsec:codes}

Recall that a cyclic code over $\ZZ_p$ of length $n$ is a subspace of $\ZZ_p^n$, invariant under the transformation which cyclically shifts coordinates one step to the right. We will need a slight generalisation of this notion.

Let $\epsilon \in \{0,1\}$. Then an {\em $\epsilon$-cyclic code of length $n$} is a subspace $E$ of the vector space $\ZZ_p^n$,  invariant under the linear transformation $R_\epsilon\in \GL(n,\ZZ_p)$ that maps the standard basis $\{e_0, \ldots, e_{n-1}\}$ according to the rule 
$e_i \mapsto e_{i+1}$ for $i\in\{0, \ldots, n-2\}$ and $e_{n-1} \mapsto (-1)^\epsilon e_0$. (Note that a $0$-cyclic code is then just the usual cyclic code, and $1$-cyclic code is sometimes called a {\em negacyclic code}.) Recall that
$\Delta_{n,\epsilon}(x) = x^n - (-1)^\epsilon \in \ZZ_p[x].$
As in the theory of cyclic codes (see \cite{LN}, for example), it is convenient to identify elements of the vector space $\ZZ_p^n$ with the elements of the quotient polynomial ring $\ZZ_p[x]/(\Delta_{n,\epsilon}(x))$ via the identification 
\begin{equation}
\label{eg:identification}
(\alpha_0, \ldots, \alpha_{n-1})  \mapsto \alpha_0 + \alpha_1x + \ldots + \alpha_{n-1}x^{n-1}\> \in \ZZ_p[x]/(\Delta_{n,\epsilon}(x)),
\end{equation}
where an element $f(x) + (\Delta_{n,\epsilon}(x))$ of the quotient ring is represented simply by its representative $f(x) \in \ZZ_p[x]$. This is a standard convention that  we often use throughout the paper.  Under the above  identification, the action of $R_\epsilon$ on $\ZZ_p^n$ corresponds to  the multiplication by the polynomial $x \in \ZZ_p[x]/(\Delta_{n,\epsilon}(x))$. Consequently, the  $\epsilon$-cyclic codes are in bijective correspondence with the ideals in  $\ZZ_p[x]/(\Delta_{n,\epsilon}(x))$.
 
 For an ideal $\cI$ of $\ZZ_p[x]/(\Delta_{n,\epsilon}(x))$, let $g(x)$ be the nonzero monic polynomial  of smallest degree   such that $g(x) + (\Delta_{n,\epsilon}(x)) \in \cI$. Then $g(x)$ is a divisor of $\Delta_{n,\epsilon}(x)$,  $\cI$ is generated by $g(x) + (\Delta_{n,\epsilon}(x))$, and $g(x)$ is  called the {\em generating polynomial} of the $\epsilon$-cyclic code $E$ that corresponds to $\cI$.  If
$$
g (x) =  \alpha_0 + \alpha_1x + \ldots + \alpha_mx^m
$$
is the generating polynomial of an $\epsilon$-cyclic code $E$, then  $\alpha_0 \not = 0$, the dimension of $E$ is $\dim E =n-m$,  and $E$ is generated by the rows of the matrix $M_{g(x)}$ as given in Definition~\ref{def:matrix}.

\subsection{Some remarks on polynomials}
\label{subsec:polynomials}

\bigskip
In this subsection we state certain lemmas about polynomials in $\ZZ_p[x]$ that will be used in the proof of Theorem~\ref{thm:maxG-kernel}. We omit some of the obvious proofs. 
Recall that $\expn(f(x))$ denotes the set of all positive integers $k \in \NN$ such that 
$f(x)$ is a polynomial in $x^k$, and  if $\deg(f(x)) = n\geq 1$, then  $d = \Expn(f(x)) = \max\{k\ |\ k \in \expn(f(x))\} \leq n$.

\begin{lemma}
\label{lem:poly-in-d}
Let $f(x) = g(x)h(x)$, where  $f(x), g(x)$ and $h(x)$ are nonconstant polynomials in $\ZZ_p[x]$. If  $f(x)$ and $g(x)$ are polynomials in $x^k$, then $h(x)$ is a polynomial in $x^k$. 
$\hfill \square$
\end{lemma}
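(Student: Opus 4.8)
The plan is to exploit the natural $\ZZ/k$-grading on $\ZZ_p[x]$ in which the indeterminate $x$ is assigned degree $1 \bmod k$. Concretely, every polynomial $p(x) \in \ZZ_p[x]$ admits a unique decomposition
\[
p(x) = \sum_{j=0}^{k-1} x^j\, p_{[j]}(x^k),
\]
obtained by collecting the monomials $a_i x^i$ according to the residue $i \bmod k$; here $p_{[j]}(y) = \sum_m a_{km+j} y^m$. Under this decomposition, the assertion that $p(x)$ is a polynomial in $x^k$ is precisely the statement that $p = p_{[0]}(x^k)$, i.e.\ that $p_{[j]} = 0$ for all $j \neq 0$. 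Thus ``being a polynomial in $x^k$'' is exactly homogeneity of degree $0$ in this grading.

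The key observation is that multiplication respects the grading, and in particular a factor that is homogeneous of degree $0$ maps each graded component into itself. Since $g(x)$ is a polynomial in $x^k$, say $g(x) = g_{[0]}(x^k)$, I would write
\[
g(x)\,h(x) = \sum_{j=0}^{k-1} x^j\, \big(g_{[0]}(x^k)\, h_{[j]}(x^k)\big),
\]
so that the degree-$j$ graded component of $f = gh$ is $g_{[0]}(x^k)\,h_{[j]}(x^k)$ (carried by the factor $x^j$). Since $f(x)$ is itself a polynomial in $x^k$, its graded components of degree $j \neq 0$ vanish, which gives $g_{[0]}(x^k)\,h_{[j]}(x^k) = 0$ for every $j \in \{1, \ldots, k-1\}$.

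To conclude I would use that $\ZZ_p$ is a field, so $\ZZ_p[x]$ is an integral domain, together with the fact that $g$ is nonconstant, whence $g_{[0]}(x^k) \neq 0$. Cancelling this nonzero factor forces $h_{[j]} = 0$ for all $j \neq 0$, which by the characterization above means exactly that $h(x) = h_{[0]}(x^k)$ is a polynomial in $x^k$, as required.

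I do not anticipate a genuine obstacle here: the whole argument is a one-line graded computation followed by an integral-domain cancellation. The only point needing mild care is the verification that the residue decomposition is unique and that multiplication by a degree-$0$ element is grading-preserving, both of which are immediate from the unique representation of a polynomial by its coefficient sequence. I would deliberately avoid the alternative substitution $x \mapsto \zeta x$ by a primitive $k$-th root of unity, since that route breaks down when $p \mid k$ (where such primitive roots need not exist), whereas the grading argument is entirely insensitive to the characteristic.
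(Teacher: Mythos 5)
Your proof is correct, but it takes a genuinely different route from the paper's. The paper argues at the level of individual coefficients with an extremal choice: writing $f(x)=\sum_i \alpha_i x^i$, $g(x)=\sum_i \beta_i x^i$, $h(x)=\sum_i \gamma_i x^i$, it picks the \emph{largest} index $i$ not divisible by $k$ with $\gamma_i \neq 0$, extracts the coefficient of $x^{\deg g + i}$ in $f = gh$, and checks that every term $\beta_{\deg g - j}\,\gamma_{i+j}$ with $j \neq 0$ vanishes (if $k \nmid j$ then $\beta_{\deg g - j} = 0$ because $g$ is a polynomial in $x^k$; if $k \mid j$ then $\gamma_{i+j} = 0$ by maximality of $i$ or because $i+j > \deg h$), leaving $\beta_{\deg g}\,\gamma_i = 0$ and hence the contradiction $\gamma_i = 0$. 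Your argument replaces this bookkeeping with the $\ZZ/k$-grading: you collect all residue classes at once, use that $g$ is homogeneous of degree $0$, and cancel $g$ as a whole factor in the integral domain $\ZZ_p[x]$. What your version buys is conceptual clarity --- no extremal choice, no index computation, and it makes visible that the statement is really about homogeneity. What the paper's version buys is that it is maximally elementary (a single coefficient identity) and generalizes verbatim to any commutative coefficient ring in which the leading coefficient of $g$ is not a zero divisor, whereas your cancellation step as stated uses that the whole polynomial ring is a domain; over $\ZZ_p$ both hypotheses hold automatically, so the difference is one of style and potential generality. Your closing remark about avoiding the substitution $x \mapsto \zeta x$ by a primitive $k$-th root of unity is well taken: that route does break down when $p \mid k$, and both your argument and the paper's are insensitive to the characteristic.
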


\comment{
\begin{proof}
Denote  by  $\{\alpha_i \ |\ i = 0, \ldots, \deg\,f(x)\}$,  $\{\beta_i \ |\ i = 0, \ldots, \deg\,g(x)\}$, and $\{\gamma_i \ |\ i = 0, \ldots, \deg\,h(x)\}$ the coefficients of $f(x)$, $g(x)$, and $h(x)$respectively. Let $i$ be the largest index not divisible by $k$ such that 
$\gamma_i \neq 0$. Then 
$$
\sum_{j= 0}^{\deg\,g(x)} \beta_{\deg\,g(x) - j} \gamma_{i + j} = \alpha_{\deg\,g(x) + i}.
$$
 Since $k$ is not a divisor of $i$  we have $\alpha_{\deg\,g(x) + i} = 0$. Now, for $j \neq 0$, if $k$ does not divide $j$ then $\beta_{\deg\,g(x) - j} = 0$, while  if $d$ divides $j$ then $\gamma_{i+j} = 0$ (either by maximality of $i$ or else because $i + j > \deg\,h(x)$). It follows that $\beta_{\deg\,g(x)} \gamma_i = 0$, and so $\gamma_i = 0$.
This contradiction shows that $\gamma_i = 0$  for all $i$ that are not divisible by $k$. Hence $h(x)$ is a polynomial in $x^k$. 
\end{proof}
} 

\begin{lemma}
\label{lem:d-divides-n}
Let $g(x), h(x) \in\ZZ_p[x]$ be nonconstant polynomials such that $g(x) h(x) =\Delta_{n,\epsilon}(x)$. If  $g(x)$ is a polynomial in $x^k$, then $k$ is a divisor of $n$, and $h(x)$ is also a polynomial in $x^k$. In particular, let $d = \Expn(g(x))$. Then  $\Expn(h(x)) = d$, and $g_d(x) h_d(x) = \Delta_{n/d, \epsilon}(x)$.
\end{lemma}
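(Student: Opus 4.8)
The plan is to first establish the divisibility $k \mid n$, and then obtain the remaining assertions essentially for free. Throughout I work over the algebraic closure $\overline{\ZZ_p}$ of $\ZZ_p$ and write $c = (-1)^\epsilon$, so that $\Delta_{n,\epsilon}(x) = x^n - c$ and $g(x)h(x) = x^n - c$ with $c \neq 0$. Since $g(x)$ divides $x^n - c$, its constant term is nonzero and every root of $g(x)$ in $\overline{\ZZ_p}$ is a nonzero $n$-th root of $c$. I split the analysis of $k$ into its $p'$-part and its $p$-part: write $k = k' p^a$ and $n = n' p^b$ with $p \nmid k'$ and $p \nmid n'$.

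For the $p'$-part I use roots of unity. The hypothesis that $g$ is a polynomial in $x^k$ means $g(x) = g_k(x^k)$, so the root set of $g$ is invariant under multiplication by any $k$-th root of unity. Fixing a nonzero root $\alpha$ of $g$ and letting $\zeta$ range over the $k$-th roots of unity, each $\zeta\alpha$ is again a root of $g$ and hence satisfies $(\zeta\alpha)^n = c = \alpha^n$, forcing $\zeta^n = 1$. Applying this to a primitive $k'$-th root of unity yields $k' \mid n$.

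For the $p$-part the key observation is that $g$ is a perfect $p^a$-th power. Writing $g_k(y) = \sum_i c_i y^i$ with $c_i \in \ZZ_p$ and using both $c_i = c_i^{p^a}$ and the additivity of the Frobenius, one obtains $g(x) = \big(\sum_i c_i x^{k' i}\big)^{p^a} = G(x)^{p^a}$, where $G(x) = g_k(x^{k'})$ is nonconstant. On the other hand $x^n - c = (x^{n'} - c)^{p^b}$ with $x^{n'} - c$ separable, so every root of $x^n - c$ has multiplicity exactly $p^b$; consequently every root of the divisor $g = G^{p^a}$ has multiplicity at most $p^b$. Since $G$ is nonconstant it has a root of multiplicity at least $p^a$ in $g$, whence $p^a \le p^b$, i.e.\ $a \le b$. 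As $k'$ and $p^a$ are coprime divisors of $n$, we conclude $k = k' p^a \mid n$. I expect this $p$-part --- passing from the prime-to-$p$ information supplied by roots of unity to the full divisibility --- to be the main obstacle, precisely because raising to $x^k$ with $p \mid k$ introduces inseparability that the naive root-of-unity argument cannot detect.

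Once $k \mid n$ is known, $\Delta_{n,\epsilon}(x) = x^n - c$ is itself a polynomial in $x^k$, so Lemma~\ref{lem:poly-in-d} applied to $\Delta_{n,\epsilon} = g \cdot h$ immediately gives that $h$ is a polynomial in $x^k$. For the final assertions, set $d = \Expn(g)$; the above yields $d \mid n$ and that $h$ is a polynomial in $x^d$, so $d \mid \Expn(h)$ since $\expn(\cdot)$ is closed under least common multiples. Conversely, writing $e = \Expn(h)$ and applying the already-proved divisibility part to $h$ gives $e \mid n$, so $\Delta_{n,\epsilon}$ is a polynomial in $x^e$, and Lemma~\ref{lem:poly-in-d} then shows $g$ is a polynomial in $x^e$, whence $e \mid \Expn(g) = d$; thus $d = e$. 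Finally, writing $g(x) = g_d(x^d)$ and $h(x) = h_d(x^d)$ and exploiting the injectivity of the substitution $y \mapsto x^d$ in the identity $(g_d h_d)(x^d) = x^n - c = \Delta_{n/d,\epsilon}(x^d)$ yields $g_d(x) h_d(x) = \Delta_{n/d,\epsilon}(x)$, completing the proof.
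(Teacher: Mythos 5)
Your proof is correct, but it takes a genuinely different route from the paper's. The paper argues by direct coefficient comparison: writing $h(x)=\beta_0+\beta_1x+\ldots+\beta_tx^t$ and taking the \emph{smallest} index $i$ not divisible by $k$ with $\beta_i\neq 0$, the coefficient of $x^i$ in $g(x)h(x)$ collapses to $\alpha_0\beta_i$ (all other contributions vanish by minimality of $i$), and since $0<i<n$ this coefficient must be zero on the right-hand side, a contradiction; thus $h$ is a polynomial in $x^k$ \emph{first}, and $k\mid n$ then falls out of $n=\deg g+\deg h$ being a sum of multiples of $k$. You reverse this order: you prove $k\mid n$ first by passing to $\overline{\ZZ_p}$, splitting $k$ into its prime-to-$p$ part (handled by invariance of the root set of $g$ under multiplication by roots of unity) and its $p$-part (handled by noting $g$ is a $p^a$-th power via Frobenius, while roots of $x^n-c=(x^{n'}-c)^{p^b}$ have multiplicity exactly $p^b$), and only then recover the statement about $h$ by invoking Lemma~\ref{lem:poly-in-d} --- a legitimate use, since that lemma precedes this one and is proved independently. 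What your approach buys is a conceptual explanation of where the characteristic enters: the inseparability issue you flag is real, and your Frobenius argument isolates it cleanly. What the paper's approach buys is economy: it is a few lines, entirely elementary, needs no algebraic closure, separability, or case split on the characteristic, and in fact works over any integral domain, delivering both conclusions ($h\in\ZZ_p[x^k]$ and $k\mid n$) in a single stroke. Your handling of the final assertions ($\Expn(h)=d$ by mutual divisibility, and $g_d(x)h_d(x)=\Delta_{n/d,\epsilon}(x)$ by injectivity of the substitution $x\mapsto x^d$) matches the paper's logic in substance.
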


\begin{proof}
Let  $g(x) = \alpha_0 + \alpha_k x^k + \ldots + \alpha_{rk} x^{rk}$ and  $h(x) = \beta_0+\beta_1x+\ldots +\beta_tx^t$, $\beta_t \neq 0$. Note that $\alpha_0, \beta_0 \neq 0$ since $\Delta_{n,\epsilon}$ is not divisible by $x$.  We have 
$$
(\alpha_0+\alpha_kx^k+\ldots+\alpha_{rk}x^{rk})\cdot (\beta_0+\beta_1x+\ldots +\beta_tx^t) =x^n - (-1)^{\epsilon}.
$$
Let $i\in \{1,\ldots,t\}$ be the smallest index not a multiple of $k$ such that  $\beta_i\ne 0$. Then, $\alpha_0\beta_ix^i$ is the only term with $x^i$ on the right-hand side. Since $\alpha_0 \neq 0$ we must have   $\beta_i=0$,  a contradiction. So $t = sk$ and $h(x) = \beta_0 + \beta_k\, x^k + \ldots + \beta_{sk}\, x^{sk}$. Consequently, $rk + sk = n$. Hence $k$ is a divisor of $n$ and $k \in \expn(h(x))$.

Now let $d = \Expn(g(x))$ and $d' = \Expn(h(x))$. By the first part we have $d \in \expn(h(x))$ and  $d' \in \expn(g(x))$. By the maximality of $d'$ and $d$ we have 
 $d \leq d'$ and $d' \leq d$, so $d = d'$, as required. The equality 
$g_d(x) h_d(x) = \Delta_{n/d, \epsilon}(x)$ is then immediate.
\end{proof}

\begin{remark}
\label{rem:g-is-1}
{\rm 
The last statement in Lemma~\ref{lem:d-divides-n} also holds in the degenerate case when $g(x) =1$ and $h(x) = \Delta_{n,\epsilon}$.  Then $d = n$, $g_d(x) = 1$, and $h_d(x) = \Delta_{1,\epsilon}$.
} 
\end{remark}

\bigskip
For a polynomial $f(x) = \alpha_0 + \alpha_1 x + \ldots + \alpha_mx^m \in \ZZ_p[x]$ we denote by $\ind\,f(x) = \{ i \ |\ a_i \neq 0\}$ the set of indices of all nonzero coefficients of $f(x)$.   

\begin{lemma}
\label{lem:Expn-gcd}
A nonconstant polynomial $f(x) \in \ZZ_p[x]$ can be written as a polynomial in $x^k$ if and only if the ideal $(k) \subseteq \ZZ$ contains $\ind\,f(x)$. Moreover, the largest such exponent $k$ is equal to the greatest common divisor of integers in $\ind\,f(x)$, that is,  $d = \Expn(f(x)) = \gcd(\ind\,f(x))$,  and $k \in \expn(f(x))$ if and only if $k$ is a divisor of $d$.
$\hfill\square$
\end{lemma}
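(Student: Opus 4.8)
The plan is to unpack the definitions and reduce the whole statement to an elementary fact about common divisors. First I would recall that, by definition, $k \in \expn(f(x))$ means $f(x) = f_k(x^k)$ for some polynomial $f_k \in \ZZ_p[x]$; writing $f(x) = \alpha_0 + \alpha_1 x + \ldots + \alpha_m x^m$, this holds precisely when every monomial actually occurring in $f$ has exponent divisible by $k$. In other words, $k \in \expn(f(x))$ if and only if $k \mid i$ for every $i \in \ind f(x)$, which is exactly the condition $\ind f(x) \subseteq (k)$. This establishes the first assertion essentially by rewriting the definition.

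Next I would translate this divisibility condition into the language of greatest common divisors. The requirement that $k \mid i$ for every $i \in \ind f(x)$ says precisely that $k$ is a common divisor of the integers in $\ind f(x)$. Since $f(x)$ is nonconstant, $\ind f(x)$ contains at least one positive integer (namely $\deg f(x) = m \geq 1$), so $d := \gcd(\ind f(x))$ is a well-defined positive integer. I would note here that if $\alpha_0 \neq 0$ then $0 \in \ind f(x)$, but this imposes no constraint, since $k \mid 0$ holds automatically and adjoining $0$ does not change the gcd of the remaining (positive) indices.

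Finally I would invoke the defining property of the gcd: the common divisors of a finite set of integers are exactly the divisors of their gcd. Hence $k \in \expn(f(x))$ if and only if $k \mid d$. Both remaining claims then follow at once: the largest element of $\expn(f(x))$ is $d$ itself, so $\Expn(f(x)) = d = \gcd(\ind f(x))$, and $k \in \expn(f(x))$ if and only if $k$ is a divisor of $d$.

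I do not anticipate any genuine obstacle; the lemma is a direct reformulation of the definition of $\expn$ together with the standard characterization of common divisors via the gcd. The only point requiring a word of care is the treatment of the index $0$ when $\alpha_0 \neq 0$, which is handled by observing that $k \mid 0$ is automatic and hence that this index contributes nothing to either the divisibility condition or the gcd.
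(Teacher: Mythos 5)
Your proposal is correct and follows essentially the same route as the paper's own (omitted, but implicit) argument: unpack the definition to get that $k \in \expn(f(x))$ is equivalent to $k$ dividing every index in $\ind f(x)$, i.e.\ $\ind f(x) \subseteq (k)$, and then invoke the standard fact that the common divisors of a set of integers are exactly the divisors of its gcd (the paper phrases this via the containment of ideals $(d) \subseteq (k)$, which is the same fact). Your explicit remark that the index $0$ is harmless is a small extra care point not present in the paper, but nothing in either argument depends on it.
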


\comment{ 
\begin{proof}
Let $f(x) = \alpha_0 + \alpha_1x + \ldots + \alpha_mx^m  \in \ZZ_p[x]$ be  a polynomial in $x^k$. If $\alpha_i \neq 0$ then $i$ is divisible by $k$, and so $\ind \,f(x) \subset (k)$. Conversely, let $\ind\,f(x) \subset (k)$. Then the index of each nonzero coefficient is divisible by $k$, and so $f(x)$ is a polynomial in $x^k$.
$\hfill\square$

The smallest ideal containing $\ind\,f(x)$ is $(d)$, where $d = \gcd(\ind\,f(x))$, and this smallest ideal  is the intersection of all ideals $(k)$ that contain $\ind\,f(x)$.  Thus, for each $k \in \expn(f(x))$ we have $(d) \subseteq (k)$, and so $k$ is a divisor of $d$. Conversely, if $k$ divides $d$, then $(d) \subseteq (k)$. Hence $(k)$ contains $\ind(f(x))$ and $k \in \expn(f(x))$, as required.
\end{proof}
} 

\begin{lemma}
\label{lem: Q-ofx}
Let $f(x) \in \ZZ_p[x]$ be a nonconstant polynomial. For  $d = \Expn(f(x))$ and $k \in \expn(f(x))$, let $Q(x) = f_d(x^{d/k})$ and let $d'= \Expn(Q(x))$. Then  $d' = d/k$  and $Q_{d'}(x) = f_d(x)$.
\end{lemma}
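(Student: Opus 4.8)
The plan is to reduce everything to the gcd-characterization of $\Expn$ provided by Lemma~\ref{lem:Expn-gcd}. First I would record that $Q(x)$ is well-defined: since $k\in\expn(f(x))$, that same lemma gives $k\mid d$, so $d/k$ is a positive integer and the substitution $x\mapsto x^{d/k}$ in $f_d(x)$ makes sense. The whole statement is then a bookkeeping computation with index sets. Writing $f_d(x)=\sum_i \beta_i x^i$, the substitution yields $Q(x)=f_d(x^{d/k})=\sum_i \beta_i x^{i\cdot d/k}$, so that $\ind\,Q(x) = \tfrac{d}{k}\cdot \ind\,f_d(x)$ (the index set scales by the factor $d/k$). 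Applying Lemma~\ref{lem:Expn-gcd} to $Q(x)$ then gives
$$
d' = \Expn(Q(x)) = \gcd(\ind\,Q(x)) = \frac{d}{k}\,\gcd(\ind\,f_d(x)).
$$

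The one genuinely non-mechanical input is the claim $\gcd(\ind\,f_d(x))=1$, equivalently $\Expn(f_d(x))=1$. I would derive this from the maximality of $d=\Expn(f(x))$: if $f_d(x)$ were a polynomial in $x^j$ for some $j\geq 2$, say $f_d(x)=P(x^j)$, then $f(x)=f_d(x^d)=P(x^{dj})$ would exhibit $dj\in\expn(f(x))$ with $dj>d$, contradicting $d=\Expn(f(x))$. Hence $\gcd(\ind\,f_d(x))=1$, and substituting this into the displayed equation gives $d'=d/k$, the first assertion.

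For the second assertion I would simply invoke uniqueness of the associated polynomial. By construction $Q(x)=f_d(x^{d/k})$, and since $d'=d/k$ this reads $Q(x)=f_d(x^{d'})$; as the associated polynomial $Q_{d'}(x)$ is the unique polynomial satisfying $Q(x)=Q_{d'}(x^{d'})$, we conclude $Q_{d'}(x)=f_d(x)$. I expect no real obstacle here: the argument is entirely routine once the index-set scaling and the normalization $\gcd(\ind\,f_d(x))=1$ are in place, with the latter being the only point requiring the maximality of $d$ rather than pure computation.
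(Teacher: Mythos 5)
Your proof is correct and follows essentially the same route as the paper: both arguments reduce everything to Lemma~\ref{lem:Expn-gcd}, exploit the maximality of $d=\Expn(f(x))$, and finish by the uniqueness of the associated polynomial. The only difference is bookkeeping: you compute $d'$ in one shot via the scaling identity $\gcd(\ind\,Q(x))=\tfrac{d}{k}\gcd(\ind\,f_d(x))$ together with the explicit observation $\Expn(f_d(x))=1$, whereas the paper reaches $d'=d/k$ by a two-sided divisibility argument (first $d/k \mid d'$, then writing $kd'=Nd$ and deducing $Nd\mid d$, hence $N=1$) --- the two mechanisms are interchangeable.
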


\begin{proof}
By Lemma~\ref{lem:Expn-gcd} we have that $k$ is a divisor of $d$, so the polynomial 
$Q(x)$ exists.
From $f_d(x^{d/k}) = Q(x) = Q_{d'}(x^{d'})$ it follows, by Lemma~\ref{lem:Expn-gcd}, that $d/k$ divides $d'$. Consequently, there is a natural number $N$ such that 
$Nd = kd'$. From  $f(x) = f_d(x^d) = Q(x^k)= Q_{d'}(x^{kd'}) = Q_{d'}(x^{Nd})$ it then follows, also by Lemma~\ref{lem:Expn-gcd}, that $Nd$ divides $d$. Hence $N = 1$, and so $d' = d/k$. Finally, $f_d(x^{d'}) = Q(x) = Q_{d'}(x^{d'})$ implies
$Q_{d'}(x) = f_d(x)$, and the proof is complete. 
\end{proof}

\begin{lemma}
\label{lem:refl-unique}
A  nonzero polynomial $f(x)  \in \ZZ_p[x]$  is type-1 and type-2 reflexible if and only if $2 \in \expn(f(x))$, and in this case the two types of reflexibility coincide.  In particular, the reflexibility type of a reflexible polynomial  is uniquely determined. 
\end{lemma}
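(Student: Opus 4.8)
The plan is to prove the equivalence by directly comparing the coefficient conditions defining the two reflexibility types, the crux being to relate the two multipliers that witness type-1 and type-2 reflexibility. Throughout I write $f(x)=\alpha_0+\ldots+\alpha_m x^m$ with $\alpha_m\neq 0$, and I use that $p$ is odd, so that $-1\neq 1$ in $\ZZ_p$ and nonzero scalars may be cancelled.

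For the implication that both types force $2\in\expn(f(x))$, I would suppose that $\lambda\alpha_{m-i}=\alpha_i$ (type-1, multiplier $\lambda$) and $\mu\alpha_{m-i}=(-1)^i\alpha_i$ (type-2, multiplier $\mu$) hold for all $i$. First I would note that $\alpha_0\neq 0$: taking $i=m$ in the type-1 relation gives $\lambda\alpha_0=\alpha_m\neq 0$. Substituting $\alpha_i=\lambda\alpha_{m-i}$ into the type-2 relation yields $(\mu-(-1)^i\lambda)\alpha_{m-i}=0$ for every $i$, that is, $(\mu-(-1)^{m-j}\lambda)\alpha_j=0$ whenever $\alpha_j\neq 0$. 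Evaluating this at the two guaranteed nonzero coefficients $\alpha_m$ and $\alpha_0$ gives $\mu=\lambda$ and $\mu=(-1)^m\lambda$, whence $(-1)^m=1$ and so $m$ is even. Feeding $m$ even and $\mu=\lambda$ back into the relation shows $(-1)^j=1$ for every index $j$ with $\alpha_j\neq 0$; thus all nonzero coefficients sit in even degrees, which is exactly $2\in\expn(f(x))$. As a by-product the two multipliers coincide.

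For the converse together with the ``coincidence'' claim, I would assume $2\in\expn(f(x))$, so that $\alpha_i=0$ for odd $i$ and, since $\alpha_m\neq 0$, the degree $m$ is even. Then for even $i$ one has $(-1)^i=1$, so the type-1 equation $\lambda\alpha_{m-i}=\alpha_i$ and the type-2 equation $\lambda\alpha_{m-i}=(-1)^i\alpha_i$ are literally identical; for odd $i$ both $\alpha_i$ and $\alpha_{m-i}$ vanish (as $m-i$ is then odd), so both equations hold trivially. Hence, for a fixed $\lambda\in\ZZ_p^*$, the polynomial $f(x)$ is type-1 reflexible with multiplier $\lambda$ if and only if it is type-2 reflexible with the same $\lambda$; the two conditions coincide, and a reflexible polynomial with $2\in\expn(f(x))$ is therefore reflexible of both types simultaneously.

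Finally, the uniqueness statement drops out: if a reflexible $f(x)$ has $2\notin\expn(f(x))$, the second paragraph shows it cannot satisfy both types at once, so its type is unambiguous; if $2\in\expn(f(x))$ the two types coincide and there is nothing to distinguish. The only delicate point is the index bookkeeping in the forward direction—one must secure that the boundary coefficients $\alpha_0$ and $\alpha_m$ are nonzero before using them to pin down the relation $\mu=(-1)^{m-j}\lambda$, and one must invoke $p$ odd at the step $(-1)^m=1\Rightarrow m$ even. Everything else is a routine comparison of coefficients.
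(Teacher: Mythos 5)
Your proof is correct and follows essentially the same route as the paper's: a direct coefficient comparison that pins down the two multipliers at the nonzero boundary coefficients $\alpha_0,\alpha_m$, forces $\lambda=\mu$ and the vanishing of all odd-degree coefficients (using that $2$ is invertible for $p$ odd), and observes conversely that when $2\in\expn(f(x))$ the two defining conditions become literally identical. The only cosmetic difference is that the paper gets $\lambda=\mu$ immediately from the relation at $i=0$, whereas you route through $i=m$ and both endpoints, deriving $m$ even as an intermediate step; both are the same coefficient bookkeeping.
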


\begin{proof}
 Let 
$f(x) = \alpha_0 + \alpha_1x +\ldots + \alpha_m x^m \in \ZZ_p[x]$.  
Suppose that $\lambda \alpha_{m-i}= \alpha_i$ and 
$\mu \alpha_{m-i}= (-1)^i \alpha_i$ for all $i$. For $i = 0$ we have 
$\lambda \alpha_m =\alpha_0 = \mu \alpha_m$. Now  $\alpha_m \neq 0$ implies
$\lambda =\mu$, and $\alpha_i = \lambda \alpha_{m-i} = (-1)^i \alpha_i$ forces that $(1 - (-1)^i) \alpha_i = 0$ for all indices. For $i$ odd we have $2\alpha_i =0$ and hence $\alpha_i = 0$ since $2 \in \ZZ_p$ is invertible for $p$ odd. Hence $2 \in \expn(f(x))$. Conversely, if $2 \in \expn(f(x))$ then $a_i = 0$ for all odd indices $i$, and then there is no difference between both types of reflexibility. 
The last statement in the lemma holds either because there is no difference between the two types of redlexibility or else because $2 \not\in \expn(f(x))$, and $f(x)$ is either type-1 or type-2 reflexible.
\end{proof}

\begin{lemma}
\label{lem:f-refl-fd-refl}
Let $f(x) \in \ZZ_p[x]$ be a polynomial of degree $m \geq 1$. If $\Expn(f(x))$ is odd, then $f(x)$ is reflexible if and only if $f_d(x)$ is reflexible; the reflexibility type is preserved. If $\Expn(f(x))$ is even, then $f(x)$ is reflexible if and only if $f_d(x)$ is reflexible of type $1$. In particular, a reflexible polynomial is weakly reflexible; the converse holds unless $f_d(x)$ is type-2 reflexible with $d$ even. 
\end{lemma}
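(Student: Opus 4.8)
The plan is to reduce both reflexibility conditions for $f(x)$ directly to conditions on its associated polynomial $f_d(x)$ by a coefficient comparison, keeping careful track of the parity of $d$. First I would fix notation: by Lemma~\ref{lem:Expn-gcd} the exponent $d=\Expn(f(x))$ divides every index in $\ind f(x)$, and in particular divides the degree $m$; writing $m=dk$ and $f_d(x)=\beta_0+\beta_1x+\ldots+\beta_k x^k$, we have $\alpha_{jd}=\beta_j$ for $0\le j\le k$ and $\alpha_i=0$ whenever $d\nmid i$. The crucial structural remark is that, since $d\mid m$, we have $m-i\equiv -i\pmod d$, so $d\mid i$ if and only if $d\mid(m-i)$; consequently the coefficients $\alpha_i$ and $\alpha_{m-i}$ vanish simultaneously off the arithmetic progression $\{jd\}$, and every reflexibility condition indexed by such an $i$ reduces to the trivial identity $0=0$.

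Next I would carry out the two coefficient comparisons on the surviving indices $i=jd$. For type-$1$ reflexibility the requirement $\lambda\alpha_{m-i}=\alpha_i$ becomes $\lambda\beta_{k-j}=\beta_j$ for all $j$, which is precisely type-$1$ reflexibility of $f_d(x)$ with the same $\lambda$; thus $f(x)$ is type-$1$ reflexible if and only if $f_d(x)$ is. For type-$2$ reflexibility the requirement $\lambda\alpha_{m-i}=(-1)^i\alpha_i$ becomes $\lambda\beta_{k-j}=((-1)^d)^j\beta_j$, and here the sign $((-1)^d)^j$ depends on the parity of $d$: when $d$ is odd it equals $(-1)^j$, yielding type-$2$ reflexibility of $f_d(x)$, whereas when $d$ is even it equals $1$, yielding type-$1$ reflexibility of $f_d(x)$. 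Assembling these equivalences gives the two displayed cases at once: for $d$ odd each reflexibility type of $f(x)$ matches the same type of $f_d(x)$, while for $d$ even reflexibility of $f(x)$ of either type is equivalent to type-$1$ reflexibility of $f_d(x)$.

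Finally I would deduce the \emph{in particular} clause. Since in every case $f(x)$ reflexible forces $f_d(x)$ reflexible (of the same type when $d$ is odd, of type $1$ when $d$ is even), a reflexible polynomial is weakly reflexible. For the converse, when $d$ is odd the equivalences run in both directions, so $f_d(x)$ reflexible returns $f(x)$ reflexible; when $d$ is even the converse can fail only if $f_d(x)$ is reflexible but not type-$1$ reflexible, for then neither reflexibility condition on $f(x)$ can hold. By Lemma~\ref{lem:refl-unique} a polynomial that is both type-$1$ and type-$2$ reflexible has $2\in\expn$, so this genuinely obstructive case is exactly $d$ even together with $f_d(x)$ type-$2$ but not type-$1$ reflexible, as claimed.

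I do not anticipate a serious obstacle: the whole argument is a bookkeeping computation. The single point requiring care is the interplay between the parity of $d$ and the vanishing of the off-progression coefficients; once the observation $d\mid i\Leftrightarrow d\mid(m-i)$ is in place, the sign $(-1)^i=((-1)^d)^j$ does the rest, and the only subtlety is remembering, via Lemma~\ref{lem:refl-unique}, that a polynomial reflexible of both types does not obstruct the converse.
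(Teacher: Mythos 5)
Your proposal is correct and follows essentially the same coefficient-comparison argument as the paper's own proof, and is in fact slightly more explicit at the one delicate point: you justify the reverse implications by observing that for indices $i$ with $d \nmid i$ the reflexibility conditions hold trivially (since $d \mid m$ forces $\alpha_i$ and $\alpha_{m-i}$ to vanish together), which the paper compresses into ``the implications hold in the other direction as well.'' The only stylistic divergence is in the type-2, $d$ even case, where you compute the sign $((-1)^d)^j = 1$ directly to land on type-1 reflexibility of $f_d(x)$, while the paper instead invokes Lemma~\ref{lem:refl-unique} (via $2 \in \expn(f(x))$) to reduce to the type-1 case; both routes are valid, and both rely, as you do implicitly, on $\Expn(f_d(x)) = 1$ to rule out $f_d(x)$ being reflexible of both types when identifying the exceptional case in the final clause.
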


\begin{proof}
Let $f(x) = \alpha_0 + \alpha_1 x + \ldots + \alpha_mx^m = \alpha_0 + \alpha_dx^d + \ldots + \alpha_{td} x^{td}$. Letting $\alpha_i' = \alpha_{id}$ we have 
$f_d(x) = \alpha_0' + \alpha_1' x + \ldots + \alpha_t'x^t$.

First suppose that $f(x)$ is type-1 reflexible with $\lambda \alpha_{m-i} = \alpha_i$. For $i = jd$ we have  $\lambda \alpha_{td - jd} = \alpha_{jd}$. Hence $\lambda \alpha_{t-j}' = \alpha_j'$, and so $f_d(x)$ is type-1 reflexible. The implications hold in the other direction as well.

Suppose now that $f(x)$ is type-2 reflexible with $\lambda \alpha_{m-i} = (-1)^i\alpha_i$. Here we must distinguish two cases. First, let $d$ be odd. For $i = jd$ we have $\lambda \alpha_{td-jd} = (-1)^{jd}\alpha_{jd} = (-1)^{j}\alpha_{jd}$. Hence
$\lambda \alpha_{t-j}' = (-1)^{j}\alpha_j'$, and $f_d(x)$ is type-2 reflexible. The implications hold in the other direction as well.
Second, let $d$ be even. Then $f(x)$ is also type-1 reflexible by Lemma~\ref{lem:refl-unique}, and so $f_d(x)$ is type-1 reflexible. Clearly, if $d$ is even and $f_d(x)$ is type-2 reflexible, then $f(x)$ cannot be reflexible. For if it were, then $f(x)$ would be type-1 reflexible and hence $f_d(x)$ also  type-1 reflexible. But in view of Lemma~\ref{lem:refl-unique}  this is a contradiction since $\Expn(f_d(x)) = 1$.
\end{proof}

\comment{ 
\begin{lemma}
\label{lem:f-refl-fd-refl}
For $k  \in \expn(f(x))$, let $h(x)$ be the polynomial such that   $f(x) = h(x^k)$. Suppose that  $f(x)$ is reflexible. Then  $h(x)$ is reflexible.  In particular, the associated polynomial $f_d(x)$,
$d = \Expn(f(x))$, is reflexible, that is, $f(x)$ is weakly reflexible.
\end{lemma}

\begin{proof}
Let $f(x) = \alpha_0 + \alpha_1 x + \ldots + \alpha_mx^m = \alpha_0 + \alpha_kx^k + \ldots + \alpha_{tk} x^{tk}$. Letting $\alpha_i' = \alpha_{ik}$ we have 
$g(x) = \alpha_0' + \alpha_1' x + \ldots + \alpha_t'x^t$.

First suppose that $f(x)$ is type-1 reflexible with $\lambda \alpha_{m-i} = \alpha_i$. Then
for $i = jk$ we have  $\lambda \alpha_{tk - jk} = \alpha_{jk}$. Hence $\lambda \alpha_{t-j}' = \alpha_j'$, and so $g(x)$ is type-1 reflexible.

Suppose now that $f(x)$ is type-2 reflexible with $\lambda \alpha_{m-i} = (-1)^i\alpha_i$. Here we must distinguish two cases. First, let $k$ be odd. For $i = jk$ we have $\lambda \alpha_{tk-jk} = (-1)^{jk}\alpha_{jk} = (-1)^{j}\alpha_{jk}$. Hence
$\lambda \alpha_{t-j}' = (-1)^{j}\alpha_j'$, and $g(x)$ is type-2 reflexible.
Second, let $k$ be even. For $i = jk$ we have 
$\lambda \alpha_{tk-jk} = (-1)^{jk}\alpha_{jk} = \alpha_{jk}$. Then
$\lambda \alpha_{t-j}' = \alpha_{j}'$, and again, $g(x)$ is reflexible (observe that  the  reflexibility type  has changed).
\end{proof}

\begin{remark}
\label{rem:poly}
{\rm 
If $g(x)$ as above is reflexible with $\lambda \alpha_{t-j}' = \alpha_j'$, then $f(x)$ is reflexible.  However,  if $g(x)$ is reflexible with $\lambda \alpha_{t-j}' = (-1)^{j}\alpha_j'$, then $f(x)$ is refexible only when $k$ is odd. 
} 
\end{remark}
} 

\section{Proofs of Theorems}
\label{sec:Proof-4-5-3}
\subsection{Vertex- and edge-transitive subgroups of $\Aut \DC$}
\label{subsec:AutDC}

In this section we describe the conjugacy classes of subgroups of automorphisms with transitive action on both the vertex and the edge set, that is, the conjugacy classes of half-arc-transitive and arc-transitive subgroups of $\Aut(\DC)$. 

Recall that the vertex set of $\DC$ is indexed by $\ZZ_n$, and that the two arcs (darts) from a vertex $i$ to the vertex $i+1$ are denoted by $a_i$ and $b_i$ while  $a_i^{-1}$ and $b_i^{-1}$ are the corresponding inverse arcs. 

Automorphisms of $\DC$ are best described as  permutations of arcs (where the action on inverse arcs  is suppressed for brevity). Consider 
the following ones:   the \emph{elementary transposition}  exchanging the arcs $a_j$ and $b_j$,  the \emph{reflection} $\sigma$ across the arcs connecting the vertices $0$ and $1$, and the \emph{one step rotation} $\rho$,
$$
\tau_j=(a_j b_j) = \tau_0^{\rho^j} = \rho^{-j}  \tau_0\, \rho^j ,
$$
$$
{a_i}^\sigma = (a_{-i})^{-1},\> {b_i}^\sigma=(b_{-i})^{-1},
$$
$$
\rho=(a_0 a_1 \ldots a_{n-1})(b_0 b_1 \ldots b_{n-1}).
$$

Observe that $\rho$, $\tau_0$, and $\sigma$ generate the full automorphism group $\Aut(\DC)$, and that  
$$
K = \gen{\tau_0, \tau_1, \ldots, \tau_{n-1}} \cong  \ZZ_2^n
$$
is  normal in  $\Aut(\DC)$; in fact, $K$ is the kernel of the action of $\Aut(\DC)$ on the vertex-set $V(\DC)$. Denoting $\tau_J = \prod_{j \in J} \tau_j $ we have 
$$
K = \{\tau_J  : J \subseteq \ZZ_n\}  \quad\text{with}\quad \tau_J\tau_L = \tau_{J\oplus L},
$$
where $J\oplus L$ is the symmetric difference of $J$ and $L$ in $\ZZ_n$. The group $K$ is occasionally  considered as a $\ZZ_2$-vector space. Let   $\chi\colon K \to \ZZ_2^n$ be the {\em characteristic function} assigning to each $\tau_J$ its {\em characteristic vector} 
$\chi_J = \chi(\tau_J) = (\chi_0^J,\chi_1^J, \ldots, \chi_{n-1}^J)$, where
\begin{equation}
\label{eq:charact}
\chi_i^J= \left\{\begin{array}{rl}
                         1 & i  \in J, \\
                         0 & i  \notin J.
                  \end{array}\right.
\end{equation}
Then  $\chi_i^{J \oplus L} = \chi_i^J + \chi_i^L$, and the characteristic function is clearly a group as well as a vector space  isomorphism. 

The group $\Aut(\DC)$ is isomorphic to a semidirect product of   $K$ by the dihedral group $\gen{\rho, \sigma}$, 
$$
\Aut(\DC)=\gen{\rho,\tau_0,\sigma} = \gen{\rho, \sigma}  \ltimes K  \cong \Dih(n) \ltimes  \ZZ_2^n.
$$
With the convention that  $J +k  = \{j+k : j \in J\}$, for $k\in \ZZ_n$ and $J\subseteq \ZZ_n$, the action of $\rho$ on $K$ is explicitly given by 
\begin{equation}
\tau_J^{\rho} = \tau_{J + 1},\quad   \chi_i^{J+1} = \chi_{i-1}^J. \label{eq:Jrho}
\end{equation}
In view of the isomorphism  $\chi\colon K \to \ZZ_2^n$ the rotation  $\rho$ acts on the vector space $\ZZ_2^n$ as the {\em cyclic shift } ${\cal C} \colon (\chi_0, \chi_1, \ldots, \chi_{n-1}) \mapsto (\chi_{n-1}, \chi_0, \ldots, \chi_{n-2})$. Next,
 with the convention that  $-J = \{-j : j \in J\}$,   the action of $\sigma$ on $K$ is
\begin{equation}
\tau_J^{\sigma} = \tau_{-J}, \quad  \chi_i^{-J} = \chi_{-i}^J. \label{eq:Jsigma}
\end{equation}
 Finally,  the group $\Aut_0(\DC) = \langle \rho, \tau_0\rangle $ is clearly an index-$2$ subgroup of  $\Aut(\DC)$, and
$$
\Aut_0(\DC) = \gen{\rho}   \ltimes  K    \cong  \ZZ_n \ltimes \ZZ_2^n.
$$

\begin{lemma}
\label{lem:tau-C}
Let $\tau_C \in K$. Then there exists $X \subseteq \ZZ_n$ such that $\tau_C = \tau_X\tau_{X+1}\tau_0^{\epsilon}$ where $\epsilon = 0$ if $|C|$ is even and $\epsilon =1$ if $|C|$ is odd.
\end{lemma}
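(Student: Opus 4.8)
The plan is to translate the group identity into a linear system over $\ZZ_2$ via the characteristic-vector isomorphism $\chi$, and then solve that system by back-substitution along the cycle $\ZZ_n$, reading off the forced value of $\epsilon$.

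First I would rewrite the right-hand side. Since $\tau_J\tau_L = \tau_{J\oplus L}$, the product $\tau_X\tau_{X+1}\tau_0^\epsilon$ equals $\tau_Y$ with $Y = X\oplus(X+1)$ when $\epsilon=0$ and $Y = X\oplus(X+1)\oplus\{0\}$ when $\epsilon=1$. Hence the asserted identity holds if and only if $C = Y$. Passing to characteristic vectors and using $\chi_i^{J+1} = \chi_{i-1}^J$ from~\eqref{eq:Jrho}, this is equivalent to the component-wise system over $\ZZ_2$
$$
\chi_i^C = \chi_i^X + \chi_{i-1}^X + \epsilon\,\delta_{i,0}, \qquad i\in\ZZ_n,
$$
with unknowns the bits $\chi_i^X$ together with the parameter $\epsilon\in\{0,1\}$.

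Next I would solve this recurrence. For $i\neq 0$ the relation reads $\chi_i^X = \chi_{i-1}^X + \chi_i^C$, so choosing $\chi_0^X$ freely determines all remaining bits by $\chi_i^X = \chi_0^X + \sum_{j=1}^{i}\chi_j^C$. The sole remaining constraint is the wrap-around equation at $i=0$, namely $\chi_0^C = \chi_0^X + \chi_{n-1}^X + \epsilon$. Substituting the expression for $\chi_{n-1}^X$ cancels the two copies of $\chi_0^X$ and leaves exactly $\epsilon = \sum_{j=0}^{n-1}\chi_j^C = |C| \bmod 2$. Thus the system is consistent precisely when $\epsilon$ equals the parity of $|C|$, which is the value prescribed in the statement; for that $\epsilon$ the set $X$ furnished by the recurrence exists (with $\chi_0^X$ arbitrary), completing the argument.

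The computation is entirely routine, and the only point requiring care is the single wrap-around equation, which is exactly where the parity of $|C|$ enters and fixes $\epsilon$. I would note in passing that the same fact follows from coding theory: identifying the cyclic shift $\mathcal{C}$ with multiplication by $x$ in $\ZZ_2[x]/(x^n-1)$, the operator $I+\mathcal{C}$ becomes multiplication by $1+x$, whose image is the even-weight ideal $(1+x) = \{c : c(1)=0\}$; so $\chi_C$ (if $|C|$ is even) or $\chi_C + e_0$ (if $|C|$ is odd) lies in this image, again yielding the required $X$.
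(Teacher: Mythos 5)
Your proof is correct, and it takes a genuinely different (though closely related) route from the paper's. You solve the linear system $\chi_i^C = \chi_i^X + \chi_{i-1}^X + \epsilon\,\delta_{i,0}$ explicitly by back-substitution along the cycle and read off the forced value of $\epsilon$ from the single wrap-around constraint, whereas the paper argues by a dimension count: it observes that $\ad_\rho\colon \tau_X \mapsto \tau_X\tau_{X+1}$ is the linear map $\id + \mathcal{C}$ on $K\cong\ZZ_2^n$, that its kernel is the one-dimensional subspace $\langle \tau_{\ZZ_n}\rangle$, hence by rank--nullity its image has dimension $n-1$; since that image is spanned by the even-weight elements $\tau_0\tau_1,\ldots,\tau_{n-2}\tau_{n-1}$ and the even-weight vectors themselves form an $(n-1)$-dimensional subspace, the image is exactly the set of $\tau_Y$ with $|Y|$ even, and the lemma follows by correcting odd-weight $\tau_C$ with $\tau_0$. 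Your approach buys an explicit formula for $X$, namely $\chi_i^X = \chi_0^X + \sum_{j=1}^{i}\chi_j^C$ with $\chi_0^X$ free (this one bit of freedom is precisely the paper's one-dimensional kernel $\langle\tau_{\ZZ_n}\rangle$), and it simultaneously shows that the stated $\epsilon$ is the \emph{only} value for which a solution exists; the paper's argument is shorter but non-constructive. Your closing coding-theoretic remark --- that the image of $\id+\mathcal{C}$ is the even-weight ideal $(1+x)$ in $\ZZ_2[x]/(x^n-1)$ --- is in substance the paper's proof, so you have in effect supplied both arguments.
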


\begin{proof}
Consider $K$ as a $\ZZ_2$-vector space. Then the function  $\ad_\rho\colon K\to K$,  $\tau_X \mapsto \tau_X\tau_{X+1}$ is a linear mapping (it corresponds to $\id + {\cal C}$, where ${\cal C}$ is the cyclic shift). Its  kernel is $1$-dimensional, generated by $\tau_{\ZZ_n}$. Thus, the image of $\ad_\rho$ has dimension $n-1$ and is generated by $\tau_0\tau_1, \ldots,  \tau_{n-2}\tau_{n-1}$. It follows that the image of $\ad_\rho$ is precisely the set of all those $\tau_Y$ for which $|Y|$ is even. Consequently,  there exists some $X\subseteq \ZZ_n$ such that $\tau_C = \tau_X\tau_{X+1}\tau_0^{\epsilon}$ where $\epsilon = 0$ if $|C|$ is even and $\epsilon =1$ if $|C|$ is odd, as required. 
\end{proof}

Our characterization of vertex- and edge-transitive subgroups of $\Aut(\DC)$ is now given as follows.

\begin{lemma}
\label{lem:AT}
Let $G$ be a subgroup of $\Aut(\DC)$. Then $G$ is vertex- and edge-transitive if and only if one of the following holds:
\begin{itemize}
\item[(i) ]
$G$ is not arc-transitive and $G$ is conjugate in $\Aut(\DC)$ either to the group 
\begin{equation}\label{eq:nonrotary}\gen{\rho, \sigma\tau_{\ZZ_n}},
\end{equation}
or to the group
\begin{equation}
\label{eq:AT1}
\langle B, \rho\tau_0^\epsilon \rangle,
\end{equation}
where $B \leq K$ is nontrivial and normal in $G$, and $\epsilon \in \{0,1\}$ is such that $\tau_{\ZZ_n}^{\epsilon} \in B$;

\item[(ii)]
$G$ is arc-transitive and $G$ is conjugate in $\Aut(\DC)$ to the group
\begin{equation}
\label{eq:AT2}
\langle B, \rho\tau_0^{\epsilon}, \sigma\tau_0^\epsilon\tau_J\rangle
\end{equation}
where $B \leq K$ is nontrivial and normal in  $G$, and $\epsilon\in\{0,1\}$ is such that $\tau_{\ZZ_n}^\epsilon \in B$, while $J\subseteq \ZZ_n$ is such that $\tau_J\tau_{-J} \in B$  and $\tau_J\tau_{J+1}\in B$. 
\end{itemize}
Moreover, $G\cap K=1$ in case $(\ref{eq:nonrotary})$ while $G\cap K=B$ in cases  $(\ref{eq:AT1})$ and  $(\ref{eq:AT2})$. Furthermore,  in case  $(\ref{eq:AT1})$ the group $B$ is normal in  $\Aut_0(\DC)$,  and normal in $\Aut(\DC)$  in case  $(\ref{eq:AT2})$.
\end{lemma}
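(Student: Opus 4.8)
The plan is to study $G$ through the two data $B=G\cap K$ and the image $\bar G=GK/K\le \Aut(\DC)/K\cong\Dih(n)$, where the latter acts on the vertex set $\ZZ_n$ of $\DC$ exactly as $\Dih(n)$ acts on the cycle $C_n=\DC/K$. Since $K$ fixes every vertex and is the kernel of the action on $C_n$, vertex-transitivity of $G$ is equivalent to vertex-transitivity of $\bar G$ on $C_n$, while edge-transitivity of $G$ forces edge-transitivity of $\bar G$ on $C_n$; so I would first determine the vertex- and edge-transitive subgroups of $\Dih(n)$ on $C_n$. Writing the rotation part of $\bar G$ as $\gen{\rho^d}$ with $d\mid n$, the reflections of $\bar G$ (if present) lie in a single coset and hence act on the $d$ orbits of $\gen{\rho^d}$ --- both on vertices and on edges --- as one involution, so transitivity on these $d$ classes forces $d\le 2$ in each case. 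The value $d=2$ is then eliminated by parity: a reflection $i\mapsto k-i$ swaps the two vertex-classes mod $2$ iff $k$ is odd, but swaps the two edge-classes iff $k$ is even. Hence $d=1$, giving $\bar G=\gen{\rho}$ or $\bar G=\Dih(n)$.

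In either case $G$ contains a lift $\rho\tau_C$ of $\rho$. By Lemma~\ref{lem:tau-C} I may write $\tau_C=\tau_X\tau_{X+1}\tau_0^\epsilon$ with $\epsilon\equiv|C|\pmod 2$, and a short computation using $\tau_Y^\rho=\tau_{Y+1}$ gives $(\rho\tau_C)^{\tau_X}=\rho\tau_0^\epsilon$. Conjugating $G$ by $\tau_X\in K$ (which centralizes $B\le K$) I may therefore assume $\rho\tau_0^\epsilon\in G$; moreover $(\rho\tau_0^\epsilon)^n=\tau_{\ZZ_n}^\epsilon$, so $\tau_{\ZZ_n}^\epsilon\in B$. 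If $\bar G=\gen{\rho}$, then $G\le\Aut_0(\DC)$ and $G=\gen{B,\rho\tau_0^\epsilon}$ with $G\cap K=B$. Here $\gen{\rho}$ respects the partition of arcs (and of edges) into $a$- and $b$-type, so $G$ is edge-transitive iff some $\tau_J\in B$ with $J\ne\emptyset$ exists, i.e.\ iff $B\ne 1$; and $G$ has no orientation-reversing element, so it is never arc-transitive. This yields form (\ref{eq:AT1}).

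Now suppose $\bar G=\Dih(n)$ and fix a lift of $\sigma$, written as $\sigma\tau_0^\epsilon\tau_J$ (this defines $J$). The requirement $G\cap K=B$ amounts to saying that the $K$-parts produced by lifting the defining relations of $\Dih(n)$ through $\rho\tau_0^\epsilon$ and $\sigma\tau_0^\epsilon\tau_J$ all lie in $B$. Computing with $\tau_J^\rho=\tau_{J+1}$, $\tau_J^\sigma=\tau_{-J}$ and $\rho^\sigma=\rho^{-1}$ gives $(\sigma\tau_0^\epsilon\tau_J)^2=\tau_J\tau_{-J}$ and, from $\sigma\rho\sigma^{-1}=\rho^{-1}$, the obstruction $\tau_{-J}\tau_{-J+1}=(\tau_J\tau_{J+1})^{\sigma\rho}$; since $B=G\cap K$ is normalized by the lifts of $\rho$ and $\sigma$ (their $\tau$-parts act trivially as $K$ is abelian), these collapse to exactly $\tau_J\tau_{-J}\in B$ and $\tau_J\tau_{J+1}\in B$. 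If $B\ne 1$ then, as before, $B$ supplies an $a/b$-swap while $\sigma\tau_0^\epsilon\tau_J$ sends forward arcs to backward arcs; together with vertex-transitivity this gives transitivity on all $4n$ arcs, so $G$ is arc-transitive --- form (\ref{eq:AT2}). If $B=1$, then $\tau_{\ZZ_n}^\epsilon\in B$ forces $\epsilon=0$, the two constraints force $J\in\{\emptyset,\ZZ_n\}$, edge-transitivity rules out $\emptyset$, and $G\cong\Dih(n)$ has $|G_v|=2$ and hence is not arc-transitive; thus $G$ is conjugate to $\gen{\rho,\sigma\tau_{\ZZ_n}}$, which is form (\ref{eq:nonrotary}).

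Finally I would record that $B=G\cap K\normal G$: being normalized by the lifts of $\rho$ (and $\sigma$) and centralized by the abelian group $K$, it is normal in $\Aut_0(\DC)$ in case (\ref{eq:AT1}) and normal in $\Aut(\DC)$ in case (\ref{eq:AT2}); the equality $G\cap K=1$ in case (\ref{eq:nonrotary}) is seen directly. For the converse direction each displayed group is checked to be vertex- and edge-transitive --- and arc-transitive precisely in case (\ref{eq:AT2}) --- by the same swap/reversal computations. The step I expect to be most delicate is this last case analysis: tracking the $K$-valued obstructions from the lifted dihedral relations carefully enough to see that they reduce to exactly the two stated conditions on $J$, and confirming that these conditions are sufficient (not merely necessary) for $G\cap K=B$; the parity argument eliminating $d=2$ in the $C_n$ classification is the other point requiring care.
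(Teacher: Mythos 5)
Your proposal is correct, and it takes a genuinely different route from the paper's proof, though the two share the same backbone (the conjugation trick via Lemma~\ref{lem:tau-C} to normalize the rotation lift to $\rho\tau_0^\epsilon$, and the same two membership conditions on $J$). The paper argues by analyzing the $G$-orbit of the arc $a_0$: the non-arc-transitive case splits according to whether $\Orb(a_0)=\{a_i,b_i^{-1}\}$ (handled by identifying $G$ with the automorphism group of a pair of oppositely directed cycles, yielding $\gen{\rho,\sigma\tau_{\ZZ_n}}$) or $\Orb(a_0)=\{a_i,b_i\}$ (yielding~(\ref{eq:AT1})), and in the arc-transitive case the conditions on $J$ are extracted from the squares $(\sigma\tau_0^\epsilon\tau_J)^2=\tau_J\tau_{-J}$ and $(\rho\tau_0^\epsilon\,\sigma\tau_0^\epsilon\tau_J)^2=\tau_{-J-1}\tau_J$ together with $B=B^\sigma$; the equality $G\cap K=B$ for the displayed groups is then verified by showing $\sigma\tau_0^\epsilon\tau_J$ normalizes $\gen{B,\rho\tau_0^\epsilon}$ and inspecting the resulting normal form of elements. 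You instead classify the image $\bar G\le \Dih(n)$ first, and your parity argument eliminating $d=2$ makes explicit a step the paper only asserts (``it follows that $G$ contains an automorphism which acts on $V(\DC)$ as the rotation''); you then read off the conditions on $J$ as the $K$-valued obstructions of the lifted dihedral relations. Your obstruction $\tau_{-J}\tau_{-J+1}=(\tau_J\tau_{J+1})^{\sigma\rho}$ is a $\gen{\rho,\sigma}$-conjugate of the paper's, and membership in $B$ is equivalent since $B$ is normalized by $\rho$ and $\sigma$, so this is consistent. Finally, the step you flag as delicate but leave unexecuted --- sufficiency of the two conditions for $G\cap K=B$ --- does go through, and more cheaply than you fear: given $\tau_{\ZZ_n}^\epsilon,\ \tau_J\tau_{-J},\ \tau_J\tau_{J+1}\in B$ and $B$ normal in $G$, the images of $\rho\tau_0^\epsilon$ and $\sigma\tau_0^\epsilon\tau_J$ in $G/B$ satisfy the defining relations of $\Dih(n)$, so $|G/B|\le 2n$; since $G/B$ also surjects onto $GK/K\cong\Dih(n)$, the induced map $G/B\to\Dih(n)$ is an isomorphism and $(G\cap K)/B$ is trivial. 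This von Dyck-style argument replaces the paper's normal-form computation; the paper's version is more hands-on, while yours explains conceptually why exactly those two conditions on $J$ (and no others) appear.
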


\begin{proof}
Let us first prove that the groups as in (\ref{eq:nonrotary}), (\ref{eq:AT1}), and (\ref{eq:AT2}) have the required properties. The group as in (\ref{eq:nonrotary}) is obviously vertex and edge but not arc transitive, and $G \cap K = 1$.

\bigskip
Suppose that $G$ is as in $(\ref{eq:AT1})$. First, since $B$ is normal in $G$ by assumption, $(\rho \tau_0^\epsilon)^{-1} B  (\rho \tau_0^\epsilon) = B$
implies $\rho^{-1} B \rho = B$. Hence $B$ is normal in $\Aut_0(\DC)$.  Now, as $B$ is nontrivial  there exists  $\tau_L \in B$ switching at least one pair of arcs $a_i$ and $b_i$. But since    $\tau_{L + k} = \tau_L^{\rho^k} \in B$ for all $k \in \ZZ_n$,  each pair of parallel  arcs is switched under the action of $B$. Moreover, $\rho\tau_0^{\epsilon}$   makes the action of $G$ vertex- and edge-transitive. Clearly, $G$ is not arc.transitive. 
Let us prove that  $G \cap K = B$. Since $B$ is normalized by $\rho\tau_0^{\epsilon}$, any element in $G$ is of the form $g = \tau_L^i (\rho\tau_0^{\epsilon})^j$, where $\tau_L \in B$.  Suppose that $g \in G \cap K$. Then $(\rho\tau_0^{\epsilon})^j \in K$. Since  $(\rho\tau_0^{\epsilon})^j = \rho^j \tau_{j-1}^{\epsilon} \ldots \tau_{1}^{\epsilon}\tau_{0}^{\epsilon}$  it follows that  $(\rho\tau_0^{\epsilon})^j \in K$ is either trivial, or else equal to $(\rho\tau_0^{\epsilon})^j = \tau_{\ZZ_n} \in B$.
So $g \in B$ in all cases, and  $G \cap K = B$.

\bigskip
Suppose that $G$ is as in (\ref{eq:AT2}). The action of $\la B, \rho\tau_0^{\epsilon}\ra$ is vertex- and edge-transitive, and  $\sigma \tau_0^{\epsilon} \tau_J$ switches at least one arc $a_i$ with $a_i^{-1}$ or $b_i^{-1}$; hence $G$ is arc transitive. As before, $B$ is normalized by $\rho$ while  $(\sigma \tau_0^{\epsilon} \tau_J)^{-1} B (\sigma \tau_0^{\epsilon} \tau_J) = B$ implies 
$\sigma^{-1} B \sigma = B$, and so $B$ is normal in $\Aut(\DC)$.
To prove that $G \cap K = B$, observe that 
\begin{eqnarray}
\tau_{J + k}\tau_{J+l}, \ \tau_{J + k}\tau_{-J+l} & \in B, & \text{for all}\quad  k,l \in \ZZ_n  \label{eq:J-kl}.
\end{eqnarray}
This follows since $\tau_J\tau_{-J}$ and  $\tau_J\tau_{J+1}$ are in $B$ and since $B$ is normalized by  $\rho$ and $\sigma$. Next, we have
$$
(\sigma \tau_0^{\epsilon}\tau_J) (\rho \tau_0^{\epsilon}) (\sigma \tau_0^{\epsilon}\tau_J)^{-1} =  (\tau_{-J}\tau_{-J-1}) (\rho \tau_0^{\epsilon})^{-1}.
$$
In view of  (\ref{eq:J-kl})  it follows that  $\sigma \tau_0^{\epsilon}\tau_J$ normalizes $\la B, \rho\tau_0^{\epsilon}\ra$. Therefore, any element of $g \in G$ is of the form
$$ 
g = (\rho\tau_0^{\epsilon})^i \tau_L^j (\sigma \tau_0^{\epsilon}\tau_J)^k,
$$
where $\tau_L \in B$. Suppose now that  $g \in G \cap  K$. Then $k$ must be even for otherwise $g$ does not belong to $\Aut_0(\DC)$. Since $\sigma \tau_0^{\epsilon}\tau_J$ is of order $2$ or $4$, either $(\sigma \tau_0^{\epsilon}\tau_J)^k$ is trivial or else equal to $(\sigma \tau_0^{\epsilon}\tau_J)^2 = \tau_J\tau_{-J} \in B$.  Consequently, $(\rho\tau_0^{\epsilon})^i \in K$. But then  $(\rho\tau_0^{\epsilon})^i$ is either trivial or else equal to $(\rho\tau_0)^n = \tau_{\ZZ_n} \in B$. So $g \in G \cap K$ implies $g \in B$, and $G \cap K = B$, as required.

\bigskip
For the converse we show that any vertex- and edge-transitive group of $\DC$ is conjugated to one of the groups (\ref{eq:nonrotary}), (\ref{eq:AT1}), or (\ref{eq:AT2}).
Let $G\leq \Aut(\DC)$ be vertex- and edge-transitive, and let $B=K\cap G$. Then $B$ is the kernel of the action of $G$ on $V(\DC)$ and hence normal in $G$. Since $G$ is vertex- and edge-transitive, it follows that $G$ contains an automorphism which acts on $V(\DC)$ as the rotation $(0,1,\ldots,n-1)$, and is thus of the form $\rho\tau_C$ for some $\tau_C \in K$.
Now, since $B$ is normal in $G$ we have $(\rho \tau_C)^{-1} B (\rho \tau_C) = B$. Hence $\rho^{-1}B \rho = \tau_C B \tau_C^{-1} = B$, and $B$ is normal in $\Aut_0(\DC)$. Furthermore, by Lemma~\ref{lem:tau-C} we have $\tau_C = \tau_X\tau_{X+1}\tau_0^{\epsilon}$ for some $\tau_X \in K$ and $\epsilon \in \{0,1\}$. Then
$$
(\rho\tau_C)^{\tau_X} = \rho \tau_X\tau_{X+1}\tau_C=\rho\tau_0^{\epsilon}.
$$
It follows that  $G$ is conjugated within $\Aut \DC$ to  a group containing  $ \langle B, \rho\tau_0^{\epsilon}\rangle$. Assuming without loss of generality  that $G$ itself contains $\gen{B, \rho\tau_0^\epsilon}$,  let us look at the $G$-orbit of the arc $a_0$. 

\bigskip
Suppose first that $G$ is not arc-transitive. Then the  $G$-orbit $\Orb(a_0)$ of the arc $a_0$  is either $\{a_i,b_i^{-1}: i\in \ZZ_n\}$ or $\{a_i,b_i: i\in \ZZ_n\}$. 

If $\Orb(a_0) = \{a_i,b_i^{-1}: i\in \ZZ_n\}$  we must have $B=G\cap K = 1$  and $\rho\tau_0\notin G$.  Hence $\epsilon=0$ and  $\rho\in G$. Also, the orbit defines a digraph consisting of two directed cycles with inverse directions. Its automorphism group is $\gen{\rho, \sigma\tau_{\ZZ_n}} \cong \Dih(n)$, and this must also be the group $G$. 

If $\Orb(a_0) =\{a_i,b_i: i\in \ZZ_n\}$ we must have $G\le \langle \rho, \tau_0\rangle$. Moreover,  $B$ is nontrivial. For if  $B=1$,  then $(\rho\tau_0^{\epsilon})^n=\tau_{\ZZ_n}^{\epsilon} \in B$ implies $\epsilon=0$.  Hence $ \langle B, \rho\tau_0^{\epsilon}\rangle \leq G$ contains $\rho$ and, since $G$ acts transitively on the edge set, it must also contain an element $\rho\tau_L$, $\tau_L \neq 1$, which implies $\tau_L \in B$, a contradiction. Consequently, $B$ is nontrivial.  To show that 
$G = \la B, \rho\tau_0^{\epsilon}\ra$, recall that every pair of parallel arcs is switched under the action of $B$ since $B$ is normalized by $\rho$. Thus, the quotient $\DC/B$ is a simple cycle, and every element of $g \in G\setminus B$ projects to a rotation. So $g (\rho\tau_0^{\epsilon})^{-1} \in B$, and hence $G = \la B, \rho\tau_0^{\epsilon}\ra$. Finally,  $\tau_{\ZZ_n}^{\epsilon} = (\rho\tau_0^{\epsilon})^n$ implies $\tau_{\ZZ_n}^{\epsilon} \in B$, as claimed. This proves part (i). 

\bigskip
Suppose now that $G$ is arc-transitive. Then $G\cap \langle \rho,\tau_0\rangle$ has index $2$ in $G$ and is vertex- and edge-transitive; its    orbit of $a_0$ is $\{a_i,b_i: i\in \ZZ_n\}$. By (i)  we may without loss of generality assume that $G\cap \langle \rho,\tau_0\rangle = \langle B, \rho\tau_0^\epsilon \rangle$, where $B=G\cap K$ is a nontrivial  normal subgroup of $\Aut_0(\DC)$, and $\tau_{\ZZ_n}^{\epsilon} \in B$. Now, since $G$ is arc-transitive, $G$  contains an element of the form $\sigma\tau_J$ for some $\tau_J \in K$, which can be further written  (by modifying $J$ accordingly) as 
$$%
G = \langle B, \rho\tau_0^{\epsilon}, \sigma\tau_0^\epsilon\tau_J\rangle.
$$%

Next, since $K$ is normal in $\Aut(\DC)$, the intersection $B = G \cap K$ is normal in $G$. Hence 
$(\sigma\tau_0^\epsilon\tau_J)^{-1} B\,(\sigma\tau_0^\epsilon\tau_J) = B$. Thus, $\sigma^{-1} B\, \sigma = \tau_0^\epsilon\tau_J\,B\,\tau_0^\epsilon\tau_J = B$. As $B$ is normalized by $\rho$, this shows that $B$ is normal in $\Aut(\DC)$.
Finally, observe  that $G/B$ acts arc-transitively on the quotient $\DC/B$, which is a simple cycle of length $n$. In particular, $G/B \cong \Dih(n)$. Since $\sigma\tau_0^\epsilon\tau_J$ and $\rho\tau_0^\epsilon\, \sigma\tau_0^\epsilon\tau_J$ project to reflections, $(\sigma\tau_0^\epsilon\tau_J)^2 = \tau_J\tau_{-J}$ and $(\rho\tau_0^\epsilon\, \sigma\tau_0^\epsilon\tau_J)^2 = \tau_{-J-1}\tau_J$ are  elements of $B$. Now $B=B^\sigma$ implies  $\tau_{J+1}\tau_{-J} = (\tau_{-J-1}\tau_J)^\sigma \in B$. But since $\tau_J\tau_{-J}\in B$, we have 
$\tau_J\tau_{J+1} \in B$. This shows part (ii), completing the proof.
\end{proof}

\bigskip
An integer $k$, $1 \leq k \leq n $, is called a {\em period} of  a subgroup $H\le K$ provided that $H$ is centralised by $\rho^k$; equivalently, $\tau_{L+k} = \tau_L$ for all $\tau_L \in H$. The set of all periods  forms a subgroup of $\ZZ_n$ (with understanding that $n =0$). We call the smallest positive period  the {\em exact period of $H$}, and $H$  is then {\em $k$-periodic} (an $n$-periodic group is in fact aperiodic since its group of periods is trivial).  Note that the  exact period is  a divisor of $n$ as it is the `canonical' generator of the group of periods.  For a divisor $k$ of $n$ we let 
$$
\tau_{[i,k]} = \tau_i\tau_{i+k}\tau_{i+2k} \ldots \tau_{i+{(n/k-1)}k}.
$$
Clearly, $\langle \tau_{[i,k]} \rangle$ has exact period $k$; the largest $k$--periodic subgroup of $K$ is 
$\langle   \tau_{[0,k]}, \tau_{[1,k]}, \ldots , \tau_{[k-1,k]} \rangle \cong \ZZ_2^k$ and contains all 
$k$-periodic subgrups  of $K$.

\begin{lemma}
\label{lem:special-J}
Let $G =\langle B, \rho\tau_0^{\epsilon}, \sigma\tau_0^\epsilon\tau_J\rangle$ be an arc transitive group of $\DC$ as in part (ii) of 
Lemma~\ref{lem:AT}. Suppose that  $B$ has exact period  $k \geq 1$.  If $B$ is equal to the largest subgroup of $K$ with exact period $k$, then 
$G =\langle B, \rho\tau_0^{\epsilon}, \sigma\tau_0^\epsilon\tau_L\rangle$, where
\begin{center}
either $\tau_L=\tau_{\ZZ_n}$,  or else  
$2k \mid  n$  and $\tau_L= \Pi_0^{k-1} \tau_{[i,2k]}$.
\end{center}
\end{lemma}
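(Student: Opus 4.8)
The plan is to start from an arbitrary arc-transitive group $G =\langle B, \rho\tau_0^{\epsilon}, \sigma\tau_0^\epsilon\tau_J\rangle$ as in Lemma~\ref{lem:AT}(ii), where $B$ is the \emph{full} largest subgroup of $K$ of exact period $k$. The key freedom available is that we may replace the generator $\sigma\tau_0^\epsilon\tau_J$ by $\sigma\tau_0^\epsilon\tau_J\,\tau_C$ for any $\tau_C \in B$ without changing $G$; this amounts to modifying $J$ by a symmetric difference with some $C$ in the index set of $B$. So the whole problem reduces to understanding which $J$ are achievable modulo $B$, and then choosing a canonical representative $L$ in each resulting coset. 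Since $B$ is assumed to be \emph{all} of the largest $k$-periodic subgroup, it contains every $\tau_{[i,k]}$, $i\in\{0,\ldots,k-1\}$, so modifying $J$ by $B$ means we can freely add any $k$-periodic set to $J$.

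Next I would exploit the two conditions on $J$ guaranteed by Lemma~\ref{lem:AT}(ii): $\tau_J\tau_{-J}\in B$ and $\tau_J\tau_{J+1}\in B$. Translating into characteristic vectors via $\chi$, the condition $\tau_J\tau_{J+1}\in B$ says that the ``difference'' set $J\oplus(J+1)$ is $k$-periodic, i.e.\ is fixed by the shift $\rho^k$. This is a linear constraint on $\chi_J$ over $\ZZ_2$, and I expect it to force $J$ itself to have a $2k$-periodic structure (or to be everything, $J=\ZZ_n$). The plan is to work out exactly which characteristic vectors $\chi_J$ satisfy that $\chi_J + \C\chi_J$ is $k$-periodic, where $\C$ is the cyclic shift; reducing modulo the $k$-periodic subspace $B$, this becomes a condition in the quotient space $\ZZ_2^n / (\text{$k$-periodic vectors})$, which is naturally $\ZZ_2^k$-graded. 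I anticipate that the solutions split into exactly the two announced families: the all-ones vector $\tau_{\ZZ_n}$, and (when $2k\mid n$) the ``doubled-period'' vector $\Pi_{i=0}^{k-1}\tau_{[i,2k]}$, which picks out one residue class mod $2k$ within each class mod $k$. The second condition $\tau_J\tau_{-J}\in B$ then needs to be checked to be automatically satisfied, or used to rule out spurious cosets.

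The main obstacle I expect is the careful bookkeeping in the quotient space: showing that the constraint $\chi_J+\C\chi_J \equiv 0$ modulo $k$-periodic vectors has \emph{exactly} a two-element solution set (namely the images of $\tau_{\ZZ_n}$ and $\Pi_0^{k-1}\tau_{[i,2k]}$), and verifying that the second of these exists precisely when $2k\mid n$. The natural approach is to write $\C$ in the quotient as the shift on $\ZZ_2^k$ (since reducing mod $k$-periodic vectors identifies indices differing by multiples of $k$, leaving a cyclic action of order dividing $2$ on the relevant coordinates), and solve $\overline\chi_J + \overline\C\,\overline\chi_J = 0$ there. The equation $(\id+\overline\C)\overline{\chi_J}=0$ is solvable only on the fixed space of $\overline\C$, and counting dimensions pins down the two cases. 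I would then confirm that for the chosen $L$ the pair $(\tau_L\tau_{-L},\ \tau_L\tau_{L+1})$ lies in $B$, so that $\langle B,\rho\tau_0^\epsilon,\sigma\tau_0^\epsilon\tau_L\rangle$ is indeed a legitimate group of the Lemma~\ref{lem:AT}(ii) form and equals the original $G$.

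Finally I would treat the boundary behaviour: the case $k=n$ (so $B$ is aperiodic), where only $\tau_L=\tau_{\ZZ_n}$ should survive and the $2k\mid n$ alternative is vacuous, and I would double-check the parity interaction with $\epsilon$ (recall $\tau_{\ZZ_n}^\epsilon\in B$) to ensure the normalisation $\sigma\tau_0^\epsilon\tau_L$ is consistent. Assembling these, the conclusion is that $J$ can be replaced by one of the two canonical $L$, giving the stated dichotomy.
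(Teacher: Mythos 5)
Your plan is essentially the paper's proof. The paper likewise replaces $\tau_J$ by $\tau_L=\tau_J\tau_C$ with $\tau_C\in B$ (using that $B$ is the \emph{full} largest $k$-periodic subgroup), extracts everything from the single condition $\tau_J\tau_{J+1}\in B$, and obtains the dichotomy by a direct component computation: writing $\chi(\tau_J)=(\chi_0,\ldots,\chi_{n-1})$, the $k$-periodicity of $\chi(\tau_J\tau_{J+1})$ gives $\chi_i+\chi_{i-1}=\chi_{i+k}+\chi_{i+k-1}$ for all $i$, hence either $\chi_{i+k}=\chi_i$ for all $i$ (so $\tau_J\in B$ and one may take $L=\ZZ_n$), or $\chi_{i+k}=1+\chi_i$ for all $i$ (so $n/k$ is even and one may take $\tau_L=\Pi_{i=0}^{k-1}\tau_{[i,2k]}$). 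The closing verifications $\tau_L\tau_{L+1},\tau_L\tau_{-L}\in B$ are exactly as you describe, and your guess that $\tau_J\tau_{-J}\in B$ is never actually needed for the dichotomy is correct.

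One detail of your bookkeeping is wrong, and it sits precisely at what you call the main obstacle. Reducing modulo the subspace $P_k\le\ZZ_2^n$ of $k$-periodic vectors does \emph{not} identify coordinates differing by multiples of $k$: that identification is the quotient by $\mathrm{span}\{e_i+e_{i+k}\}$, which is $\ZZ_2^k$ of dimension $k$, whereas $\ZZ_2^n/P_k$ has dimension $n-k$. Worse, in the frame ``$\ZZ_2^k$ with the cyclic shift'' the fixed space of the shift is \emph{always} one-dimensional (the constant vectors), so a computation carried out there would produce the second solution for every $n$ and could never detect the necessary condition $2k\mid n$. That condition comes from the wrap-around of the recursion $\chi_{i+k}=1+\chi_i$ around $\ZZ_n$: iterating it $n/k$ times gives $\chi_i=\chi_{i+n}=\chi_i+n/k \pmod 2$, forcing $n/k$ even. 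The repair is simply to run the computation on the components of $\chi_J$ in $\ZZ_2^n$ itself, as above --- after which your argument goes through and coincides with the paper's.
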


\begin{proof}
Let $\chi(\tau_J) = (\chi_0, \ldots, \chi_{n-1})$. 
Then the $i$-th component of $\chi(\tau_J\tau_{J+1})$ is equal to $\chi_i +\chi_{i-1}$. Since $\tau_J\tau_{J+1} \in B$ and $B$ has exact period $k$  we have $\chi_i +\chi_{i-1} =  \chi_{i+k} + \chi_{i-1+k}$ for all $i$. This immediately implies that either $\chi_{i+k} = \chi_{i}$  holds for all indices or else $\chi_{i+k} = 1 +\chi_{i}$   holds for all indices. 

In the first case,  the `pattern' in  $J$ between $0$ and $k-1$ repeats between $ik$ and $(i+1)k -1$ for any $i$. In other words,  $\tau_J = \Pi_{i \in I} \tau_{[i,k]}$, where $I = \{0,1,\ldots, k-1\} \cap J$. Hence $\tau_J$ belongs to $B$ since $B$ is the largest subgroup with exact period $k$.   

In the second case,  the `pattern' between `intervals' $[(i-1)k, ik -1]$ and $[ik, (i+1)k -1]$ in $J$ are complementary for all $i$, and $\chi_{i+2k} = \chi_i$. But then $2k$ must divide $n$.

 Let  $\tau_C = \Pi_{i \in I} \tau_{[i,k]}$, where $I = \{0,1,\ldots, k-1\} \setminus J$,
and set $\tau_L = \tau_J\tau_C = \tau_{J \oplus C}$. Properties of $\tau_J$ described above imply that 
\begin{center}
either $\tau_L=\tau_{\ZZ_n}$, or else  
$2k \mid  n$  and $\tau_L= \Pi_0^{k-1} \tau_{[i,2k]}$.
\end{center}
Now, since $B$ is the largest subgroup with exact period $k$ we have $\tau_C \in B$. Hence $\tau_L\tau_{L+1} = \tau_J\tau_{J+1}\tau_C\tau_{C+1} \in B$ since $\tau_J\tau_{J+1} \in B$ and $\tau_C, \tau_{C+1} \in B$. Similarly, $\tau_L\tau_{-L} = \tau_J\tau_{-J} \tau_C\tau_{-C} \in B$.  Moreover, $G =\langle B, \rho\tau_0^{\epsilon}, \sigma\tau_0^\epsilon\tau_J\rangle = \langle B, \rho\tau_0^{\epsilon}, \sigma\tau_0^\epsilon\tau_L\rangle$, as required.
\end{proof}

\comment{ 
Moreover, observe that $\tau_J \tau_{J+1} \in B$ automatically implies 
$\tau_J\tau_{-J} \in B$. Indeed. If $\chi(\tau_J) = (\chi_0,\chi_1, \ldots, \chi_{n-1})$ then $\chi(\tau_{-J}) = (\chi_0,\chi_{n-1}, \ldots, \chi_1)$, 
and the $i$-th component of $\chi(\tau_J\tau_{-J})$ is equal to  $\chi_i + \chi_{n-i}$. If $J = \ZZ_n$ then 
$\tau_J \tau_{-J} = \id \in B$. Otherwise we have $\chi_{i+k} = 1 +\chi_i$ for all $i$, and  hence $\chi_{i+k} + \chi_{n-i +k} = \chi_i + \chi_{n-i}$.
Thus, $\tau_J \tau_{-J} \in B$.
} 

\subsection{Proof of Theorem~\ref{thm:min-const}}
\label{subsec:min-const}

Let $G$ be a vertex- and edge-transitive subgroup of $\Aut(\DC)$, let  $p$ be an odd prime, and let $\wp \colon \Gamma \to \DC$
be a minimal $G$-admissible $p$-elementary abelian covering projection where $\Gamma$ is connected and simple. Since conjugate subgroups lift along isomorphic covering projections \cite{MMP} we may assume, by  Lemma~\ref{lem:AT}, that $G$ is one of the following groups: 

\begin{itemize}
\item[(i) ]
$G=\gen{\rho, \sigma\tau_{\ZZ_n}}$;
\item[(ii) ]
$G=\langle B, \rho\tau_0^\epsilon \rangle$, $1 \neq B \leq K$, $B \normal \Aut_0(\DC)$,  and $\tau_{\ZZ_n}^\epsilon \in B$ where $\epsilon \in \{0,1\}$;
\item[(iii) ]
$G=\langle B, \rho\tau_0^{\epsilon}, \sigma\tau_0^\epsilon\tau_J\rangle$, where $1 \neq B \leq K$,  $B \normal \Aut(\DC)$,  and $\epsilon\in\{0,1\}$ is such that $\tau_{\ZZ_n}^\epsilon \in B$, while $J\subseteq \ZZ_n$ is such that $\tau_J\tau_{-J} \in B$  and $\tau_J\tau_{J+1}\in B$.
\end{itemize}
In each case, let $\generat(G)$ denote the set of the above generators of $G$. As described in Section~\ref{subsec:elemab}, we may assume that the covering projection $\p$ is derived from a voltage assignment on $\DC$ and follow the procedure as described there.

$\bullet$ 
First fix an ordered basis $\{ c_0, c_1, \ldots, c_{n-1},c_*\}$ of $\HG_1(\DC, \ZZ_p)$, where   
\begin{eqnarray*}
c_j  & = & a_j-b_j\in\HG_1(\DC, \ZZ_p), \  j\in \ZZ_n,  \\
c_* & = & a_0+a_1+\ldots+ a_{n-1} + b_0 + b_1+\ldots+ b_{n-1}.
\end{eqnarray*}

$\bullet$ 
Further, for each  $\alpha\in \generat(G)$ we find the matrix $\alpha^\# \in \GL(p,n+1)$ representing the action of $\alpha$ on the homology group  $\HG_1(\DC, \ZZ_p)$  with respect to the above basis. Let
$$
\hat R_\epsilon = (\rho\tau_0^{\epsilon})^{\#}, \quad
\hat S=\sigma^{\#}, \quad
\hat Z = (\sigma\tau_{\ZZ_n})^{\#},\quad
\hat T_X =\tau_X^{\#}.
$$
A straightforward computation shows that:
$$
\hat R_\epsilon =  \sbm{R_{\epsilon} \\ & 1}, \quad 
\hat S=\sbm{S \\ & -1}, \quad
\hat Z = \sbm{-S\\ & -1},\quad
\hat T_X = \sbm{T_X \\ & 1}, 
$$
where 
$$R_{\epsilon} = \begin{bmatrix}
 0  & 1  & &  \\
     &  0 & \ddots         & \\
     &    &\ddots &1 \\
(-1)^ \epsilon& & & 0 \\
  \end{bmatrix},\quad
S=
\sbm{
-1&&&&0\\
&&&&-1\\
&&& -1\\
&&\iddots \\
0&-1
},
$$

$$
T_{X}= \begin{bmatrix}
 (-1)^{x_0} &0&\ldots &0\\
 0& (-1)^{x_1} & & 0\\
 \vdots  &&\ddots &0\\
 0&0&&(-1)^{x_{n-1}}  \\
  \end{bmatrix},$$
with $x_k=1$ if $k\in X$ and $x_k = 0$ otherwise.

$\bullet$ 
In the next step of the procedure we need to find the (minimal) invariant subspaces of the matrix group $G^{\#^t} = \langle (\alpha^{\#})^t : \alpha\in \generat(G)\rangle$. Observe that since $T_X$ and $S$ are symmetric, and $R_{\epsilon}^t = R_{\epsilon}^{-1}$, the matrix groups $G^\# =  \langle \alpha^{\#} : \alpha\in \generat(G)\rangle$ and $G^{\#^t}$ are equal.
For $i\in \ZZ_{n+1}$, let $e_i\in\ZZ_p^{n+1}$ denote the $(i+1)$-th standard basis vector, that is, the vector with $1$ at the
$(i+1)$-st coordinate and $0$ elsewhere. Clearly, the subspaces $E=\gen{e_0,\ldots,e_{n-1}}$ and $\gen{e_n}$ of $\ZZ_p^{n+1}$ are invariant under the action of the matrices $ \hat R, \hat S, \hat Z,\hat T_X$. Hence 
\begin{center}
{\em $\ZZ_p^{n+1}=E\oplus  \gen{e_n}$ is a  $G^\#$-invariant decomposition}.
 \end{center}

$\bullet$ 
Let $W$ be a minimal $G^\#$-invariant subspace. Then 
\begin{center}
 {\em either $W = \la e_n\ra$ or else $W \leq E$}.
\end{center}
Indeed. Suppose that $W\not\leq E$.  Then, by minimality of $W$ we have $W\cap E={0}$,  and $\codim E=1$ implies $\dim W=1$.
So $W=\gen{w}$, where $w$ is a common eigenvector for all elements of $G^{\#}$. Let us consider each  of the groups as in (i), (ii), and (iii) above separately. 

In case (i), where $G=\gen{\rho,\sigma\tau_{\ZZ_n}}$, the vector $w$ is a common eigenvector for $\hat Z$ and $\hat R_0$. We can assume that $w=u+e_n$, where $u\in E$, and we shall denote by $u=(\un{u},0)$ and $w=(\un{u},1)$ the respective row vectors in $\ZZ_p^{n+1}$. The block structure of the matrices then implies 
$$
w\hat R_0=(\un{u},1)\hat R_0=(\un{u}R_0,1),
$$
and so  $\un{u}$ is an eigenvector of $R_0$ with eigenvalue $1$. Similarly, $\lambda w = w \hat Z$ implies 
$$
\lambda (\un{u},1) = (\un{u},1)\hat Z=(-\un{u}S,-1).
$$
It follows that  $\lambda = -1$, and $\un{u}$ is an eigenvector of $S$ with eigenvalue $1$. However, $\un{u}=\un{u}S=\un{u}R_0$ forces $\un{u}=0$, and hence  $W=\gen{e_n}$.

We deal with cases (ii) and (iii) similarly. In both cases, $G^{\#}$ contains a matrix $\hat T_J$ for some $J\not = \emptyset$ and matrix
$\hat R_\epsilon$ for some $\epsilon$. Denoting $w=(\un{u},1)$ as above, equations $w\hat R_{\epsilon}=\lambda w$ and $w\hat T_J =\mu w$ for some $\lambda, \mu \in \ZZ_p$ imply $(\un{u}R_\epsilon,1)=(\un{u},1)$ and $(\un{u}T_J,1)=(\un{u},1)$, so $\un{u}=\un{u}R_\epsilon=\un{u}T_J$, again forcing $\un{u}=0$ and $W=\gen{e_n}$.

$\bullet$
In the last step of the procedure  we find the voltage assignments arising from minimal invariant subspaces as explained  in Section~\ref{subsec:elemab} (see also \cite{MMP}). 
Observe that the voltage assignment $\zeta\colon \HG_1 \to \ZZ_p$ arising from the invariant space $\langle e_n\rangle$ assigns voltage $0$ to every cycle $c_i = a_i -b_i$, forcing the covering graph to be nonsimple, contradicting our assumptions. On the other hand, if $W \leq E$, then $W$ (viewed as a subspace of $\ZZ_p^n$) is  invariant under the multiplication by $R_{\epsilon}$; hence $W$ is an $\epsilon$-cyclic code of length $n$ (see Section~\ref{subsec:codes}). If $g(x)$ is its corresponding generator polynomial, then $W$ is spanned by the rows of the  the matrix $M_{g(x)}$. 
 This completes the proof of Theorem~\ref{thm:min-const}. 
$\hfill\square$

\begin{remark}
\label{rem:degenerate}
{\rm
Note that the generating polynomial $g(x)$  of $W$ is a proper divisor of $\Delta_{n,\epsilon}$ since $W \neq 0$. Also, $W = E$ if and only if  $g(x) = 1$.
} 
\end{remark}

\subsection{Proof of Theorem~\ref{thm:maxG-kernel}}
\label{subec:inv}

Let $n$ be a positive integer, let $\epsilon\in \{0,1\}$, and let $g(x)=\alpha_0+\alpha_1x+\ldots + \alpha_m x^m$
be a proper divisor of $\Delta_{n,\epsilon}(x) \in \ZZ_p[x]$. Consider the covering projection 
$$
\wp_{g(x)}\colon \Gamma_{g(x)} \to \DC.
$$

As in Subsection~\ref{subsec:min-const},  let $e_0, \ldots, e_n$ be the standard basis  of $\ZZ_p^{n+1}$.   Further, let $W \leq E =\la e_0, \ldots, e_{n-1}\ra$ be the subspace  spanned by the rows of the matrix $M_{g(x)}$, and let $W^*$ be the subspace of $\ZZ_p^{n+1}$  obtained from  $W$ by adding a $0$ in the last component.  Then $W^*=W \oplus \langle \underline{0} \rangle$.

\medskip
\noindent
{\sc Claim 1.}
{\em An automorphism $\alpha$ of $\DC$ lifts along $\wp_{g(x)}$ if and only if the restriction $(\alpha^{\#})^t|_E$ of $(\alpha^{\#})^t$ to $E$ leaves $W$ invariant.}

\medskip
\noindent
{\em SubProof}.
From Definition~\ref{def:matrix} it easily follows that  the voltages of  the base vectors $\{c_0, c_1,\ldots, c_{n-1},c_*\}$ of $H_1 =  \HG_1(\DC,\ZZ_p)$ correspond to the columns of the matrix $[2M_{g(x)}|\underline{0}]$, obtained from $M_{g(x)}$ by multiplying all its elements by $2$ and adding an extra zero column. Since $p$ is an odd prime, the rows of $2M_{g(x)}$ and $M_{g(x)}$ span the same space. Hence $W^* = W \oplus \underline{0}$ is spanned by the rows of $[2M_{g(x)}|\underline{0}]$.

Following the explanation in Subsection~\ref{subsec:elemab}, an automorphism $\alpha$ of $\DC$ lifts along $\wp_{g(x)}$ if and only if the subspace spanned by the rows of $[2M_{g(x)}|\underline{0}]$ is $(\alpha^\#)^t$-invariant, that is, if and only if $W^*$ is  $(\alpha^\#)^t$-invariant. However,  as observed in Section~\ref{subsec:min-const},  $(\alpha^\#)^t$ preserves the decomposition $E\oplus \la e_n \ra$ of $\ZZ_p^{n+1}$, which implies that  $W^*$ is $(\alpha^\#)^t$-invariant if and only if $W$ is $(\alpha^\#)^t|_E$-invariant. This proves Claim 1. 
$\hfill\square$

\bigskip
We now turn to proving Theorem~\ref{thm:maxG-kernel}.
The space $E\cong \ZZ_p^n$ is conveniently viewed as the polynomial ring  $\ZZ_p[x]/(\Delta_{n,\epsilon}(x))$ by identifying $e_i \in E$ with the polynomial $x^i \in \ZZ_p[x]/(\Delta_{n,\epsilon}(x))$. In view of this identification we shall think of matrices  $R_\epsilon$, $T_J$, $S$ and $Z$ defined in Subsection~\ref{subsec:min-const} as acting on the corresponding polynomials.
Let $G$  be the maximal group that lifts along the covering projection $\wp_{g(x)}\colon \Gamma_{g(x)} \to \DC$. We need to show the following:
\begin{itemize}
\item[(A)]  $G$ is vertex- and edge-transitive, with $B = G \cap K $ nontrivial;
\item[(B)] 
the kernel of the action of $G$ on the vertex set of $\DC$  is isomorphic to  $\ZZ_2^d$, where $1\leq d \leq n$ is the maximal integer $d =\Expn(g(x))$ such that $g(x) = g_d(x^d)$. 
\comment{
(Recall that $d$ is a divisor of $n$, by  Lemma~\ref{lem:d-divides-n}, and the associated polynomial  $g_d(x)$ is a proper divisor of $\Delta_{n/d,\epsilon}(x) \in \ZZ_p[x]$.);
} 
\item[(C)] 
$G$ is arc-transitive if and only if  $g(x)$ is weakly reflexible, that is, if and only if $g_d(x)$ is reflexible. In particlar, $\Aut(\DC)$ lifts if and only if $\rho$ and $\tau_0$ lift if and only if $g(x) = 1$;
\item[(D)]
if $g(x)$ is not weakly reflexible, then $\wp_{g(x)}$ is minimal if and only if $g_d(x)$ is a maximal  divisor of $\Delta_{n/d,\epsilon}$. 
If $g(x)$ is weakly reflexible, then $\wp_{g(x)}\colon \Gamma_{g(x)} \to \DC$    is minimal $G$-admissible if and only $g_d(x)$ is a maximal weakly reflexible divisor of $\Delta_{n/d,\epsilon}$. In particular, the covering is minimal $\Aut(\DC)$-admissible
 if and only if  $g(x) = 1$.   
\end{itemize}

\medskip
\noindent
{\sc Claim 2.} 
{\em $G$ contains $\rho\tau_0^\epsilon$ and $\tau_{\ZZ_n}$; in particular, $G$ is vertex- and edge-transitive.}

\medskip
\noindent
{\em SubProof}.
 In view of the identification of $E\cong \ZZ_p^n$ with $\ZZ_p[x]/(\Delta_{n,\epsilon}(x))$, the subspace $W$   corresponds to an $\epsilon$-cyclic code with the generating polynomial $g(x)$. In particular, $W$ is invariant under $R_\epsilon$.   Since $((\rho\tau_0^\epsilon)^\#)^t = (R_\epsilon)^t = R_\epsilon^{-1}$ it follows by    Claim~1 that $\rho\tau_0^\epsilon$ lifts.  Similarly, the subspace $W$ is invariant under  $T_{\ZZ_n} = -I$, and therefore $\tau_{\ZZ_n}$ lifts.  This proves Claim 2 and hence part (A).
$\hfill\square$

\medskip
\noindent
{\sc Claim 3.}
{ \em The kernel $B = G \cap K$ of the action of $G$ on the vertex set of $\DC$ is equal to $B= \langle   \tau_{[0,d]}, \tau_{[1,d]}, \ldots \tau_{[d-1,d]} \rangle \cong \ZZ_2^d$, where $1\leq d \leq n$ is the maximal integer such that $g(x) = g_d(x^d)$.}

\medskip
\noindent
{\em SubProof}.
Note that after identifying $E$ with $\ZZ_p[x]/(\Delta_{n,\epsilon}(x))$ as described above, the subspace $W$ is generated by polynomials $x^j g(x)$, $j\in \{0,\ldots,n-1\}$. Since 
$g(x)=g_d(x^d) =\alpha_0+\alpha_dx^d+\ldots + \alpha_{td} x^{td}$, $m = td$, the action of $T_{[i,d]} = \tau_{[i,d]}^\#$ on the above   generating set of $W$ is given by 
\begin{equation}\label{eq:act-B}
(x^j g(x)) T_{[i,d]} = \left\{ \begin{array}{rl} x^j g(x) & \hbox{ if } d \nmid |i-j|\\  - x^j g(x) & \hbox{ if } d \mid |i-j| \end{array} \right. .
\end{equation}
Therefore, $W$ is $T_{[i,d]}$-invariant for each $i$. By Claim~1,   $\tau_{[i,d]}$ lifts and so  $\langle   \tau_{[0,d]}, \tau_{[1,d]}, \ldots \tau_{[d-1,d]} \rangle \le B$.

Let $k$ be the exact period of $B$. We have just seen that $B$ contains the largest subgroup of $K$ with period $d$. But overgroups cannot have smaller periods, hence  $k\ge d$. In order to prove that  $B$ is in fact equal to  $\langle   \tau_{[0,d]}, \tau_{[1,d]}, \ldots \tau_{[d-1,d]}\rangle$ we need to show that  $k=d$. To this end we prove that $g(x) = h(x^k)$. 

Let $0 < s <  n$ be an integer not divisible by $k$. Then $s$ is not a period of $B$, and so   there exists $\tau_L \in B$ such that $\tau_L \neq \tau_{L+s}$. This means that there is $i \in L$ such that $i + s \not\in L$. Now $B$ is clearly normal in $G$. Since $G$  contains 
$\rho\tau_0^{\epsilon}$, by Claim~2,  it follows that $B$ is normalized by $\rho$. Thus, replacing $\tau_L$ with its conjugate by an appropriate power of $\rho$ we find that $B$ contains an element $\tau_L$ such that $0 \in L$ and $s \not\in L$. Then $(g(x))T_L+g(x)$ is a polynomial of degree at most that of $g(x)$ and with a zero constant term. By applying an appropriate negative power of $R_\epsilon$ to it, one obtains an element of $W$ with degree strictly less than that of $g(x)$. Since $g(x)$ is the generator of the $\epsilon$-cyclic code $W$ we must have $(g(x))T_L+g(x) = 0$. Now $s\not\in L$ and $p$  odd together  imply that $\alpha_s = 0$. But $s$ was an arbitrary positive integer not divisible by $k$, so $g(x) = h(x^k)$ for some polynomial $h(x)$. By the choice of $d$ we have  $k\le d$. But we already know that $k\ge d$,  so $k=d$. This proves Claim~3 and hence part (B).
$\hfill\square$

\medskip
\noindent
{\sc Claim 4.}
{ \em 
$G$ is arc transitive if and only if $g(x)$ is weakly reflexible (that is,  if and only if $g_d(x)$ is reflexible). Inparticular, $\Aut(\DC)$ lifts if and only if $\rho$ and $\tau_0$ lift if and only if $g(x) = 1$.
}

\medskip
\noindent
{\em SubProof}.
By Claims~2 and 3 we already know that the group $G$ contains $\rho\tau_0^{\epsilon}$ and $B=\langle   \tau_{[0]}, \tau_{[1]}, \ldots \tau_{[d-1]} \rangle$. By Lemma~\ref{lem:AT}, the group $G$ is arc transitive if and only if it contains $\sigma\tau_0^{\epsilon}\tau_J$
for some $J \subseteq \ZZ_n$ satisfying $\tau_J\tau_{J+1} \in B$, $\tau_J\tau_{-J}\in B$. It remains to show that 
{\em $\sigma\tau_0^{\epsilon}\tau_J$ lifts if and only if  $g_d(x)$ is reflexible.}
Let  $\tau_C \in B$ be arbitrary and set $\tau_L = \tau_J\tau_C$. Then  $\sigma\tau_0^{\epsilon}\tau_J$ lifts if and only if $\sigma\tau_0^{\epsilon}\tau_L$ lifts. Moreover, $\tau_L\tau_{L+1} = \tau_J\tau_{J+1}\tau_C\tau_{C+1} \in B$ since $\tau_J\tau_{J+1} \in B$ and $\tau_C, \tau_{C+1} \in B$. Similarly, $\tau_L\tau_{-L} = \tau_J\tau_{-J} \tau_C\tau_{-C} \in B$.  Instead of $J$ we work with a suitably chosen  $L$ (which will be defined later),  and  show that

\begin{center}
{\em $\sigma\tau_0^{\epsilon}\tau_L$ lifts if and only if $g_d(x)$ is reflexible}.
\end{center}

As above, we shall think of $W$ as an ideal in $\ZZ_p/(\Delta_{n,\epsilon}(x))$ generated by the polynomial $g(x)$. By Claim~1 we know that $\sigma\tau_0^{\epsilon}\tau_L$ lifts if and only if $ST_0^{\epsilon} T_J$ preserves  $W$. We also know that $W$ is invariant under the actions of $B$ and  $R_\epsilon = RT_0^\epsilon$. We now show that in order for $W$ to be $ST_0^{\epsilon} T_L$-invariant it is enough to require that  the mapped generating polynomial stays in $W$: 
\begin{center}
{\em $W$ is  $ST_0^{\epsilon} T_L$-invariant  if and only if  $g(x) ST_0^{\epsilon} T_L \in W$}.
\end{center} 
Indeed. Clearly, if $ST_0^{\epsilon} T_L$  preserves $W$  then  $g(x) ST_0^{\epsilon} T_L \in W$. For the converse,  observe (by induction) that
$$
R_\epsilon^i (ST_0^\epsilon T_L) = (ST_0^\epsilon T_L) (T_L T_{L+1} R_\epsilon^{-1})^i.
$$
Since $\tau_L\tau_{L+1} \in B$ and $\rho \tau_0^{\epsilon}\in G$, we have  that $(T_LT_{L+1} R_\epsilon^{-1})^i$ preserves $W$. 
Hence if $g(x) (ST_0^{\epsilon} T_L) \in W$, then
$$
 x^i g(x) (ST_0^\epsilon T_L) =  (g(x) R_\epsilon^i) (ST_0^\epsilon T_L) = g(x) (ST_0^\epsilon T_L) (T_LT_{L+1} R_\epsilon^{-1})^i \in W.
$$
In particular, $W$ is $ST_0^\epsilon T_L$-invariant, as required. It therefore remains to show that 
\begin{center}
{\em $g(x) (ST_0^\epsilon T_L) \in W$ if and only if $g_d(x)$ is reflexible}.
\end{center}
To this end let us write the matrix $T_L$ as $T_L = \text{diag}(-\psi_0, -\psi_1, \ldots, -\psi_{n-1})$, where  
$$
\psi_i = \left\{\begin{array}{rl}
                         1 & i  \in L \\
                        -1 & i  \notin L.
                  \end{array}\right.
$$
By computation we obtain that $ST_0^\epsilon T_L$ acts on 
$g(x)=\alpha_0+\alpha_1x+\ldots + \alpha_m x^m$  by the rule 
\begin{align*}
 g(x) (ST_0^\epsilon T_L) \> & =  (-1)^\epsilon\psi_0\alpha_0 + \psi_{n-m}\alpha_mx^{n-m} + \ldots + \psi_{n-1}\alpha_1 x^{n-1} \\
 & =   ( \psi_{n-m}\alpha_m + \ldots + \psi_{n-1}\alpha_1 x^{m-1}  + \psi_0\alpha_0 x^{m} )\, R_\epsilon^{n-m}.
\end{align*}
Since $W$ is invariant under the action of $R_\epsilon$, it follows that $ g(x) (ST_0^\epsilon T_L) \in W$ if and only if $\psi_{n-m}\alpha_m + \ldots + \psi_{n-1}\alpha_1 x^{m-1}  + \psi_0\alpha_0 x^{m} \in W$. But this  holds if and only if there exists $\lambda \in \ZZ_p^*$ such that
$ \lambda g(x) = \psi_{n-m}\alpha_m + \ldots + \psi_{n-1}\alpha_1 x^{m-1}  + \psi_0\alpha_0 x^{m}$.
Finally, this last requirement  is equivalent to 
\begin{equation} \label{lambda}
\lambda \alpha_{m-i}=\psi_{n-i}\alpha_{i},\quad i=0,\ldots, m,
\end{equation}
with understanding that $\psi_n=\psi_0$. Since $g(x) = g_d(x^d) = \alpha_0 + \alpha_d x^d + \ldots + \alpha_{td} x^{td}$,  condition (\ref{lambda}) is relevant only for the coefficients of the form $\alpha_j' =\alpha_{jd}$, that is, for the coefficients of the polynomial $g_d(x) = \alpha_0' + \alpha_1' x + \ldots + \alpha_t' x^t$. So (\ref{lambda}) rewrites as 
\begin{equation} \label{lambda0}
\lambda \alpha_{t-j}' = \psi_{n-j d}\alpha_{j}',\quad j=0,\ldots, t.
\end{equation}
To summarize,
\begin{center}
{\em  $\sigma\tau_0^{\epsilon}\tau_L$ lifts if and only if   (\ref{lambda0}) holds.}
\end{center}

At this point we  make use of  Lemma~\ref{lem:special-J}. Since $B$ is the largest subgroup with exact period $d$ we can take $L$ to have  the special structure as described there. If $L=\ZZ_n$ we have $\psi_i = 1$ for all $i$, and   condition (\ref{lambda0}) becomes  
$\lambda \alpha_{t-j}' = \alpha_{j}'$,  for $j=0,\ldots, t$, that is,  $g_d(x)$ must be type-1 reflexible (and then $g(x)$ is type-1 reflexible as well, by Lemma~\ref{lem:refl-unique}).
Note that $\lambda \alpha_m=\alpha_0$ and $\lambda \alpha_0=\alpha_m$ imply $\lambda=\pm 1$. 

Otherwise, if $L \neq \ZZ_n$ then  $n$ must be even and  divisible by  $2d$.  In view of the structure of $L$ we now  have $\psi_i = (-1)^{(i- i\,\hbox{{\rm\small mod}}\, d)/d}$. In particular, if $d \mid i$ then $\psi_{n-i} = \psi_i = (-1)^{i/d}$ (while if $d \nmid i$ then $\psi_{n-i} = - \psi_i$). For  $d \mid i$ condition (\ref{lambda}) rewrites as  $\lambda \alpha_{m-i}=(-1)^{i/d}\alpha_i$ and hence (\ref{lambda0}) rewrites as $\lambda \alpha_{t-j}' = (-1)^j \alpha_j'$. Thus,  $g_d(x)$ is  type-2 reflexible (by Lemma~\ref{lem:refl-unique}, $g(x)$ need not be reflexible, that is, $g(x)$ is only weakly reflexible).
Note that  $\lambda \alpha_m = \alpha_0$ and $\lambda \alpha_0 = (-1)^{m/d}\alpha_m$ imply $\lambda^2 = (-1)^{m/d}=\pm 1$. 

In particular, let us prove that {\em $\Aut(\DC)$ lifts if and only if $\rho$ and $\tau_0$ lift if and only if $g(x) = 1$}. Indeed. If $\Aut(\DC)$ lifts then $\rho$ and $\tau_0$ lift. If
$\rho$ and $\tau_0$ lift, then $K = \la \tau_0^{\rho^i}\ |\ i \in \ZZ_n\ra$ lifts; so $B = K$ and $d = n$, which implies $g(x) = 1$. If $g(x) = 1$, then $W = E$ is invariant for $\Aut(\DC)^{\#}$, and so $\Aut(\DC)$ lifts. 

This proves Claim~4 and hence part (C).
$\hfill\square$

\begin{remark}
\label{rem:tauL-reflextype}
{\rm 
We have just seen that  the reflexibility type of  $g_d(x)$ is uniquely determined by $\tau_L$. But the implication holds in the other direction as well: $\tau_L$ is uniquely determined by the reflexibility type of $g_d(x)$. Indeed. If $g_d(x)$ is type-1 reflexible, then $L = \ZZ_n$ for otherwise $g_d(x)$ would be type-2 reflexible; but this is a contradiction, by   Lemma~\ref{lem:refl-unique}, since $\Expn(g_d(x)) = 1$. Similarly, if 
$g_d(x)$ is type-2 reflexible, then $L \neq \ZZ_n$ for otherwise $g_d(x)$ would be type-1 reflexible, and we get the same contradiction as above.
} 
\end{remark}

\medskip
\noindent
{\sc Claim 5.}
{ \em 
If $g(x)$ is not weakly reflexible, then the covering projection $\wp_{g(x)}\colon \Gamma_{g(x)} \to \DC$    is minimal $G$-admissible  if and only $g_d(x)$ is a maximal  divisor of $\Delta_{n/d,\epsilon}$. Otherwise, if  $g(x)$ is weakly reflexible, then 
$\wp_{g(x)}$ is minimal if and only if $g_d(x)$ is a maximal weakly reflexible divisor of $\Delta_{n/d,\epsilon}$. In particular, the covering is minimal $\Aut(\DC)$-admissible
 if and only if  $g(x) = 1$. 
}

\medskip
\noindent
{\em SubProof}.
Let us first consider the case when $g(x) = 1$. We have already proved that $\Aut(\DC)$ lifts if and only if $g(x) = 1$. This means that the covering arising from $g(x) = 1$ is minimal $\Aut(\DC)$-admissible. Now the associated polynomial of $g(x) = 1$ is $g_n(x) = 1$, and $g_n(x) = 1$ is a maximal weakly reflexible divisor of $\Delta_{1,\epsilon} = x - (-1)^{\epsilon}$, as claimed.

Consider the case when  $g(x)$ is not weakly reflexible, that is, the maximal group $G$ that lifts along  $\wp_{g(x)}\colon \Gamma_{g(x)} \to \DC$ is not arc transitive, by part (C), and we may therefore assume that $G = \la B, \rho\tau_0^{\epsilon}\ra$.  Also recall that the covering is minimal whenever $G$ does not lift along a `smaller nontrivial covering', which amounts to saying that the respective nontrivial invariant subspace -- the $\epsilon$-cyclic code  arising from  $g(x)$ -- is  a minimal nontrivial $G^\#$-invariant subspace. In terms of ideals this boils down to requiring that $(g(x))$ is a minimal nontrivial $B^{\#}$-invariant ideal in $\ZZ_p[x]/(\Delta_{n,\epsilon}(x))$.  So what we need to show is that 

\begin{center}
{\em 
$(g(x))$ is a minimal nontrivial $B^{\#}$-invariant ideal in $\ZZ_p[x]/(\Delta_{n,\epsilon}(x))$\\ if and only if $g_d(x)$ is a maximal divisor of $\Delta_{n/d, \epsilon}(x)$.
}
\end{center}
Note in particular that $(g(x))$ is a proper ideal since $g(x) \neq 1$. 
First suppose that the ideal $(g(x))$ is not minimal. Then it properly contains a nontrivial $B^{\#}$-invariant ideal, say $0 <(q(x)) < (g(x))$. Hence   $q(x) \neq \Delta_{n, \epsilon}(x)$, and $q(x) = g(x)\,h(x)$ where $h(x)$ is not a constant polynomial. Let $d' = \Expn(q(x))$ be the maximal integer such that $q(x) = q_{d'}(x^{d'})$, and let
 $G'$ be the maximal group that lifts along the respective covering associated with $(q(x))$. By part (B) the integer  $d'$ is the exact period of $B'= G' \cap K$.  
Now, since $(q(x))$ is  $B^{\#}$-invariant we have $G' \geq G$, which implies that  $G' \cap K = B' \geq B$. Therefore  $d'$ is also a period for $B$ and hence $d'$ is divisible by $d$. Consequently, $q(x) = q_{d'}(x^{d'})$ is a polynomial in $x^d$, say 
$Q(x^d) = q(x) = g_d(x^d)\,h(x)$. By Lemma~\ref{lem:poly-in-d} we have 
$h(x) = k(x^d)$, and so 
$$Q(x) = g_d(x) k(x).$$
 Now $k(x)$ is nonconstant since $h(x)$ is nonconstant,  and since $q(x) \neq \Delta_{n, \epsilon}(x)$ we have that $Q(x) \neq \Delta_{n/d, \epsilon}(x)$. Thus,  $g_d(x)$ is not a maximal divisor of $\Delta_{n/d,\epsilon}(x)$.

Suppose now that $g_d(x)$ is not a maximal divisor of $\Delta_{n/d, \epsilon}(x)$, that is,
there is a nonconstant polynomial  $h(x)$ such that $g_d(x) h(x)$ is a proper divisor of $\Delta_{n/d,\epsilon}(x)$. Then 
$$q(x) = g(x)\,h(x^d)$$
 is proper divisor of $\Delta_{n,\epsilon}$, and $(q(x))$ is a nontrivial proper sub-ideal of $(g(x))$. 
The respective subspace is generated by the polynomials $x^j q(x)$, $j\in \{0,\ldots,n-1\}$.  Similarly as in  (\ref{eq:act-B}),  since $q(x)$ is a polynomial in $x^d$ the action of the generators $T_{[i,d]} = \tau_{[i,d]}^\#$ of $B^{\#}$ on the above generating set shows that the ideal  $(q(x))$ is $B^{\#}$-invariant.

\bigskip
It remains to consider the case when the polynomial $g(x)$ is weakly reflexible, that is, the maximal group $G$ that lifts along  $\wp_{g(x)}\colon \Gamma_{g(x)} \to \DC$ is arc transitive and hence of the form $G = \la B, \rho\tau_0^{\epsilon}, \sigma\tau_0^{\epsilon} \tau_L \ra$. In terms of ideals, the respective covering is minimal if and only if $(g(x))$ is a minimal nontrivial $\la B, \sigma\tau_0^{\epsilon} \tau_L \ra^{\#}$-invariant ideal. We therefore need to prove that 

\begin{center}
{\em
$(g(x))$ is a minimal nontrivial $\la B, \sigma\tau_0^{\epsilon} \tau_L \ra^{\#}$-invariant ideal \\ if and only if $g_d(x)$ is a maximal weakly reflexible divisor of $\Delta_{n/d,\epsilon}(x)$. 
}
\end{center}
The case $g(x) = 1$ has already been considered above, so we may assume $g(x) \neq 1$.
Suppose that the ideal $(g(x))$ is not  minimal.  Then it must properly contain some nontrivial  $\la B, \sigma\tau_0^{\epsilon} \tau_L \ra^\#$-invariant ideal, say $0 < (q(x)) < (g(x))$.  Consequently,   there exists a nonconstant polynomial $h(x)$ such that 
$q(x) = g(x) h(x)$ is a proper divisor of $\Delta_{n,\epsilon}(x)$.  Let $G'$ be the maximal group that lifts along the covering associated with $(q(x))$. 
By part (B) the integer  $d'  = \Expn(q(x))$ is the exact period of $B'= G' \cap K$.
But since $(q(x))$ is $\la B, \sigma\tau_0^{\epsilon} \tau_L \ra^\#$-invariant we have
$G \leq G'$. Hence $B \leq B'=G'\cap K$,  and so $d'$  is a period for $B$. Thus, $d \mid d'$. Let  
$$Q(x) = q_{d'}(x^{d'/d}).$$
 Then $Q(x^d) =q_{d'}(x^{d'}) = g_d(x^d) h(x)$.
By Lemma~\ref{lem:poly-in-d} we have that $h(x) = k(x^d)$, and so 
$$Q(x) = g_d(x)k(x).$$
 Let $d'' = \Expn(Q(x))$. By Lemma~\ref{lem: Q-ofx} it follows that 
$Q_{d''}(x) = q_{d'}(x)$. Now $q_{d'}(x)$ is reflexible, by part (C),  since $G' \geq G$ is arc transitive. Hence $Q_{d''}(x)$ is reflexible which means that $Q(x)$ is weakly reflexible. But then $g_d(x)$ (although reflexible)  is not  a maximal  weakly reflexible divisor of $\Delta_{n/d,\epsilon}(x)$.

To prove the claim in the other direction, suppose that  
$g_d(x)$ (although reflexible) is not a maximal weakly reflexible proper divisor of $\Delta_{n/d,\epsilon}(x)$ (in particular, $g_d(x)$ is not a maximal proper divisor since a reflexible polynomial is also weakly reflexible).
This means that there exists a weakly reflexible polynomial  
$$q(x) = g_d(x) h(x),$$
 where $h(x)$ is nonconstant and $q(x)$ properly divides $\Delta_{n/d,\epsilon}(x)$. With
$d'' = \Expn(q(x))$ we have $q(x) = q_{d''}(x^{d''})$ and the polynomial $q_{d''}(x)$ is reflexible. Now set 
$$Q (x) = q(x^d) = g(x) h(x^d),$$
 and let $d'= \Expn(Q(x))$. By  Lemma~\ref{lem:Expn-gcd} we have that $d \mid d'$. Next, from $Q(x) = q(x^d)$ we have $Q_{d'}(x^{d'}) = q_{d''}(x^{d d''})$, and so $d d'' \mid d'$. Moreover, since $d \mid d'$ we have $Q_{d'}(x^{d'/d}) = q(x) = q_{d''}(x^{d''})$, and hence $d'/d$  divides $d''$.
Consequently, $d d'' = d'$, and $Q_{d'}(x^{d'}) = q_{d''}(x^{d'})$. Thus, 
$$Q_{d'}(x) = q_{d''}(x).$$
 Since $q_{d''}(x)$ is reflexible, $Q_{d'}(x)$ is reflexible. Therefore, the largest subgroup $G'$ that lifts along the covering associated with the ideal $(Q(x))$ is arc transitive, by part (C). We now show that the ideal $(Q(x))$ is $\la B, \sigma\tau_0^{\epsilon} \tau_L \ra^\#$-invariant. The respective subspace 
is generated by the polynomials $x^j Q(x)$, $j\in \{0,\ldots,n-1\}$.  
Similarly as in  (\ref{eq:act-B}),  since $Q(x)$ is a polynomial in $x^d$, the action of the generators $T_{[i,d]} = \tau_{[i,d]}^\#$ of $B^{\#}$ on the above generating set of polynomials shows that the ideal  $(Q(x))$ is $B^{\#}$-invariant. Therefore
$B \leq B' = G' \cap K$. Hence  $\la B, \rho\tau_0^{\epsilon}\ra \leq  \la B', \rho\tau_0^{\epsilon}\ra$. So the index-$2$ subgroup $G \cap \Aut_0(\DC)$ of $G$  is contained in the index-$2$ subgroup $G' \cap \Aut_0(\DC)$ of $G'$, which implies  $G \leq G'$. This shows that the nontrivial ideal $(Q(x))$ is $\la B, \sigma\tau_0^{\epsilon} \tau_L \ra^{\#}$-invariant, as required. 
 
\medskip
This proves Claim~5, that is,  part (D),  and completes the proof of Theorem~\ref{thm:maxG-kernel}.
$\hfill\square$

\subsection{Proof of Theorem~\ref{thm:main}}
\label{subsec:main-proof}

Theorem~\ref{thm:main} is a simple corollary of Theorems~\ref{thm:maxG-kernel} and \ref{thm:min-const}. Indeed. Let $\Gamma$ be a $4$-valent graph and $H \leq \Aut(\Gamma)$ a group of its automorphisms acting transitively on vertices and edges.  Further, let  $N \cong \ZZ_p^r$,  where $p$ is an odd prime, be  a minimal normal subgroup of $H$ such that the simple quotient $\Gamma_N$ is a cycle $C_n$, $n\geq 3$. 

Then, since $H$ is edge-transitive and the orbits of $N$ are blocks of imprimitivity for $H$, no vertex  in $\Gamma$  has exactly one neighbour in one `adjacent  orbit' and three neighbours in the `other adjacent orbit'. Also, no vertex has four adjacent neighbours in one `adjacent orbit' since $\Gamma_N$ is a cycle of length $n \geq 3$.   Thus, the graph induced between two `adjacent orbits' is a collection of cycles of even lengths, and hence a collection of two perfect matchings.  Since $N$ is of odd order,  each orbit contains an odd number of vertices (a power of $p$). Suppose now that some element  $\alpha \in N$ fixes a vertex.  Because $N$ is abelian,  $\alpha$ fixes all vertices in that orbit. But then $\alpha$ fixes all vertices in both `adjacent orbits'.  For if there was no fixed vertex 
 in an `adjacent orbit', the vertices in that orbit would be switched in pairs by the action of $\alpha$ --  which is impossible since each orbit has odd length. It follows that the action of $N$ is semiregular on vertices, and so  the quotient projection $q_N\colon \Gamma \to \Gamma/N \cong \DC$ is a regular covering projection, by Lemma~\ref{lem:semiregular}. 

Since $N$ is a minimal normal subgroup of $H$,  the projection $q_N$ is minimal in the sense that $H/N$ does not lift   along a `smaller regular covering'.  By Theorem~\ref{thm:min-const}, $q_N$ is isomorphic to a regular covering projection  $\wp_{g(x)} \colon \Gamma_{g(x)} \to \DC$, where $g(x)$ is a proper divisor of $\Delta_{n,\epsilon}(x)$, $\epsilon \in \{0,1\}$. Since the voltage group associated with this covering is $\ZZ_p^r$, we have $r \leq n$ and $g(x)$ has degree $m = \deg(g(x)) = n-r$. 
Moreover, if $d = \Expn(g(x))$ is the maximal integer such that $g(x) = g_d(x^d)$, then $d$ is a divisor of $n$, by Lemma~\ref{lem:d-divides-n}, and so $g_d(x)$ is a proper divisor of $\Delta_{n/d,\epsilon}(x)$. Also, since $d$ divides $m = n-r$ it also divides $r$. 

Recall that the largest group that lifts along $\wp_{g(x)}$ lifts to the normalizer $\N_{\Aut(\Gamma)}(N)$ of  $N$ within $\Aut(\Gamma)$, and the largest group that lifts and the normalizer $\N_{\Aut(\Gamma)}(N)$ are simultaneously arc transitive or not. By Theorem~\ref{thm:maxG-kernel} it  follows  that $\N_{\Aut(\Gamma)}(N)$ is arc transitive if and only if $g(x)$ is weakly reflexible. Also by Theorem~\ref{thm:maxG-kernel} -- since $\wp_{g(x)}$ is a minimal covering projection -- the fact whether the normalizer $\N_{\Aut(\Gamma)}(N)$ is arc transitive or not depends on whether 
the associated polynomial $g_d(x)$ is either  a maximal weakly reflexible divisor of $\Delta_{n/d,\epsilon}(x)$ or else a maximal  divisor of  $\Delta_{n/d,\epsilon}(x)$.

Finally, since $q_N$ is a regular covering projection, the vertex stabilizer  of  $\N_{\Aut(\Gamma)}(N)$ projects isomorphically onto the stabilizer of a vertex within the maximal group $G$ that lifts. This stabilizer is either $B \cong \ZZ_2^d$ when $G$ is not arc transitive, or else $\la\sigma'\ra \ltimes B \cong 
\ZZ_2 \ltimes \ZZ_2^d$  when $G$ is arc transitive, where $\sigma'$ is the reflexion of $\DC$ fixing a vertex. This concludes the proof. 
$\hfill\square$

\section{Further examples}
\label{sec:GardPra}

We conclude the paper with two further examples of our construction.
In particular, in Example \ref{ex:extr} we consider the extremal case with graphs $C^{\pm 1}(p;rt,r)$ and $C^{\pm \theta}(p;2rt,r)$ from Theorem \ref{GaP1}. 


\begin{example}
For an integer $n \geq 3$ and $p$ an odd prime, the polynomial $g(x)=1+x+x^2+\ldots+x^{n-1}\in\ZZ_p[x]$ is a maximal divisor of $x^n-1$. Its associated  matrix is $M_{g(x)}=\sbm{1&1&1&\ldots&1}$.
The  graph $\Gamma_{g(x)}$ is a  tetravalent graph  on the vertex set $\ZZ_p\times \ZZ_n$ with adjacency relations $(v,j)\sim(v\pm 1,j+1)$,  and is isomorphic to the tensor product of cycles $C_p\times C_n$. 
Now  $g(x) = g_d(x)$ with $d=1$ is a maximal divisor of $x^n - 1$ and is reflexible. Hence the cover is minimal, and the  maximal group $G$ that lifts is arc transitive. Here $B = \la \tau_{\ZZ_n}\ra $ and  $\epsilon=0$. 
Since $g_d(x) = g(x)$ has reflexibility type-$1$ we have  $\tau_L = \tau_{\ZZ_n}$, see
Remark~\ref{rem:tauL-reflextype}. Therefore $G$  is isomorphic to $\gen{\tau_{\ZZ_n},\rho,\sigma}\cong \ZZ_2\ltimes \Dih(n)$. 

In similar fashion, if $n$ is odd, then $g(x)=1-x+x^2-\ldots+x^{n-1}$ is a maximal divisor of $x^n+1$, the  associated matrix is $M_{g(x)}=\sbm{1&-1&1&\ldots&1}$, but due to $\pm$ symbol in the definition of $\Gamma_{g(x)}$  we get the same graph.
\end{example}

\begin{example}\label{ex:extr}
For a given integer $n \geq 3$, let us consider the extremal case where the stabilizer 
$|M_v|$ as in Theorem~\ref{GaP1} is maximal. This is equivalent to requiring that $d =\Expn(g(x)) = r$.
Let  $s = n/d = n/r$, so $\Delta_{n/d, \epsilon} = x^s - (-1)^{\epsilon}$, and  
let  $\deg\,g_d(x) = t$. From $\deg\,g(x) = dt =  rt = n - r = rs -r$ we get $t = s -1$. Therefore,  
\begin{center}
{\em the extremal case arises if and only if $g_d(x)$ generates a $1$-dimensional cyclic or negacyclic code in $\ZZ_p^s$, for some divisor  $s \mid n$, where $g_d(x)$ is reflexible}. 
\end{center}
Note that   the full automorphism group lifts if and only if $d = r = n$, or equivalently,  if and only if $\deg(g(x)) = 0$, and this happens if and only of $t = 0$, $s = 1$. This case is somewhat special since $g(x) = g_d(x) = 1$. The respective graph $\Gamma_{g(x)}$  is obtained from $M_{g(x)}= I \in \ZZ_p^{n \times n}$.  In our further analysis we assume that $s >1$.

\bigskip
Let $\theta \in \ZZ_p^{*}$ be such that $\theta^s = (-1)^{\epsilon}$. Then 
$\Delta_{n/d, \epsilon}  = (-1)^{\epsilon}(\theta x-1) (1 + \theta x + \ldots + \theta^{s-1} x^{s-1})$, and so $g_d(x) = 1 + \theta x + \ldots + \theta^{s-1} x^{s-1}$.
In order to determine  all those $\theta$ for which  $g_d(x)$ is reflexible we split the analysis into two  cases according to the type of reflexibility.  

\bigskip
$\bullet$ \noindent{\sc Case 1}.
{\em Let $g_d(x)$ be reflexible of type $1$}: there exists $\lambda \in \ZZ_p^{*}$ (which must be $\pm 1$) such that  $\lambda \theta^{s-1-j} = \theta^j$ for all $j$. In particular,  
$\lambda = \theta^{s-1}$.   Hence  $\theta =(-1)^{\epsilon +1}$, and so $\theta = \pm 1$.  If $\theta= 1$ then $1^s = (-1)^{\epsilon}$ forces $\epsilon = 0$, and $g_d(x) = 1 + x + \ldots + x^{s-1}$ arises from the factorization of $x^s-1$; there are no other restrictions on $s$.  Let $\theta = -1$. Then the equality $(-1)^s = (-1)^{\epsilon}$ forces $s$ to be even for $\epsilon = 0$, and odd for $\epsilon =1$. 
So $g_d(x) = 1 - x + x^2 - \ldots + (-1)^{s-1} x^{s-1}$ arises either from
$x^s - 1$ or $x^s +1$, respectively. Due to the $\pm$ sign in the construction of  $\Gamma_{g(x)}$  the resulting graph is  isomorphic to the one obtained already from $g_d(x) = 1 + x + \ldots + x^{s-1}$.
This graph is $C^{\pm 1}(p, rs, r)$ described in \cite{GP1}. The explicit construction 
arising from  $g_d(x) = 1 + x + \ldots + x^{s-1}$ is given by
$$M_{g(x)}=\begin{bmatrix} I | I | \ldots | I\end{bmatrix}
\in \ZZ_p^{r\times n},$$
where $I\in\ZZ_p^{r\times r}$ is the identity matrix.
The graph  $\Gamma_{g(x)}$ has  vertex set
$\ZZ_p^r \times \ZZ_n$ and adjacency relations
$(v,kr+j)\sim(v\pm e_{j+1},kr+j+1)$, where $0\leq k<s$, $0\leq j < n$, and $e_j\in \ZZ_p^r$ denotes the standard basis vector. 
Let $G$ be the largest group that lifts. Since  $\epsilon = 0$ we have $\rho \in G$, and $\tau_L = \tau_{\ZZ_n} \in B$ because of 
type $1$ reflexibility (see Remark~\ref{rem:tauL-reflextype}). Also, since $B$ is generated by the $\rho$-conjugates of $\tau_{[0,r]}$ we  have 
$G = \la  \tau_{[0,r]}, \rho, \sigma\ra \cong \Dih(n) \ltimes \ZZ_2^r$.

\bigskip
$\bullet$ \noindent{\sc Case 2}.
{\em Let $g_d(x)$ be reflexible of type $2$}: there exists $\lambda \in \ZZ_p^{*}$ (which must satisfy $\lambda^2 = (-1)^{s-1}$) such that  $\lambda \theta^{s-1-j} = (-1)^j \theta^j$ for all $j$. In particular, $\lambda \theta^{s- 1} = 1$, and so 
 $\lambda =  (-1)^{\epsilon}\theta$. This implies $\theta^2 = (-1)^{s-1}$. Moreover, $\theta^{2j} = (-1)^j$ for all $0 \leq j \leq s-1$. It follows that 
$(-1)^{(s-1)j} = (-1)^j$, that is,  $(-1)^{js} = 1$ for all $0 \leq j \leq s-1$. 
For $s$ odd we have  $(-1)^{j} = 1$ for all $0 \leq j \leq s-1$, so $s-1 = 0$. This case has already been discussed above.
Suppose that s is even. 
Then $\theta^2 = -1$, 
and we have a restriction on $p$, namely $p \equiv 1 (\mod\,4)$. Let $s = 2q$. From 
$\theta^{2q} = (-1)^{\epsilon}$ we obtain $(-1)^q = (-1)^{\epsilon}$. For $\epsilon = 0$ the integer $q$ must be even while for $\epsilon = 1$ it must be odd. In both cases 
we have $g_r(x) = 1 + \theta x + \ldots + \theta^{s-1} x^{s-1}$ and $g_d(x)$ arises either from the factorization of $x^s -1$ or $x^s +1$, respectively, where $\theta$ is an element of order $4$ in $\ZZ_p^{*}$. Since $\theta$ is defined up to $\pm$ sign we always obtain the same graph. This is the graph $C^{\pm \theta}(p,  2rq, r)$ described in \cite{GP1}.
Therefore, 
$$M_{g(x)}=\begin{bmatrix} \,I \mid \, \theta I\, | -I\, | -\theta I \mid I\,| \ldots | (-1)^{q+1}\theta I\,\end{bmatrix}
\in \ZZ_p^{r\times n},$$
where $I\in\ZZ_p^{r\times r}$ is the identity matrix. However, due to the $\pm$ sign
in the construction of  $\Gamma_{g(x)}$ we can as well take the matrix 
$$\begin{bmatrix} \,I \mid \theta I \mid I \mid  \theta I \mid  I \mid \ldots \mid \theta I\,\end{bmatrix}.$$
The vertex set is 
$\ZZ_p^r \times \ZZ_n$ and adjacency relations are
$(v,kr+j)\sim(v\pm e_{j+1},kr+j+1)$, for $k$ odd and 
$(v,kr+j)\sim(v\pm \theta e_{j+1},kr+j+1)$, for $k$ even, where 
$0\leq k<2q$, $0\leq j < r$, and $e_j\in \ZZ_p^r$ denotes the standard basis vector. 
Let $G$ be the largest group that lifts. If $q$ is even, then 
$\epsilon = 0$, so $\rho \in G$,  and $\tau_L = \Pi_{i=0}^{r-1} \tau_{[i,2r]}$ because of 
type $2$ reflexibility (see Remark~\ref{rem:tauL-reflextype}). Also, $B$ is generated by the $\rho$-conjugates of $\tau_{[0,r]}$.
Hence $G = \la  \tau_{[0,r]}, \rho, \sigma\tau_L\ra$. If $q$ is odd, then $\epsilon =1$, 
$\rho\tau_0 \in G$, and $\tau_L = \Pi_{i=0}^{r-1} \tau_{[i,2r]}$. Also, $B$ is generated by the $\rho\tau_0$-conjugates of $\tau_{[0,r]}$. Hence 
$G = \la \tau_{[0,r]}, \rho\tau_0, \sigma\tau_0\tau_L\ra$, where 
$\tau_L = \Pi_{i=0}^{r-1} \tau_{[i,2r]}$.

 \end{example}


\begin{small}

\end{small}

\end{document}